\def\RSthmtxt{theorem~}\newref{thm}{name = \RSthmtxt}}
\def\RSlemtxt{lemma~}\newref{lem}{name = \RSlemtxt}}
\numberwithin{equation}{section}
\numberwithin{figure}{section}
\theoremstyle{plain}
\newtheorem{thm}{\protect\theoremname}[section]
\theoremstyle{definition}
\newtheorem{defn}[thm]{\protect\definitionname}
\theoremstyle{remark}
\newtheorem{rem}[thm]{\protect\remarkname}
\theoremstyle{plain}
\newtheorem{lem}[thm]{\protect\lemmaname}
\theoremstyle{plain}
\newtheorem{prop}[thm]{\protect\propositionname}
\theoremstyle{plain}
\newtheorem{assumption}[thm]{\protect\assumptionname}
\theoremstyle{plain}
\newtheorem{cor}[thm]{\protect\corollaryname}
\theoremstyle{definition}
\newtheorem{example}[thm]{\protect\examplename}
\theoremstyle{remark}
\newtheorem{notation}[thm]{\protect\notationname}
\providecommand{\assumptionname}{Assumption}
\providecommand{\corollaryname}{Corollary}
\providecommand{\definitionname}{Definition}
\providecommand{\examplename}{Example}
\providecommand{\lemmaname}{Lemma}
\providecommand{\notationname}{Notation}
\providecommand{\propositionname}{Proposition}
\providecommand{\remarkname}{Remark}
\providecommand{\theoremname}{Theorem}
\begin{document}
\global\long\def\RR{\mathbb{R}}%
\global\long\def\CC{\mathbb{C}}%
\global\long\def\HH{\mathbb{H}}%
\global\long\def\ZZ{\mathbb{Z}}%
\global\long\def\QQ{\mathbb{Q}}%

\global\long\def\gam{\Gamma}%

\global\long\def\e{\epsilon}%
\global\long\def\a{\alpha}%
\global\long\def\b{\beta}%
\global\long\def\ga{\gamma}%
\global\long\def\ph{\varphi}%
\global\long\def\om{\omega}%
\global\long\def\dl{\delta}%
\global\long\def\lm{\lambda}%

\global\long\def\exd#1#2{\underset{#2}{\underbrace{#1}}}%
\global\long\def\exup#1#2{\overset{#2}{\overbrace{#1}}}%

\global\long\def\leb#1{\operatorname{Leb(#1)}}%

\global\long\def\norm#1{\left\Vert #1\right\Vert }%

\global\long\def\porsmall{\prec}%
\global\long\def\porbig{\succ}%

\global\long\def\diffeo{\simeq}%

\global\long\def\del{\partial}%
\global\long\def\dprime{\prime\prime}%

\global\long\def\manifold{\mathcal{M}}%
\global\long\def\mtx{M}%

\global\long\def\sbgrp{H}%

\global\long\def\transpose{\mbox{t}}%

\global\long\def\norm#1{\Vert#1\Vert}%
\global\long\def\bignorm#1{\left\Vert #1\right\Vert }%

\global\long\def\brac#1{(#1)}%
\global\long\def\vbrac#1{|#1|}%
\global\long\def\sbrac#1{[#1]}%
\global\long\def\dbrac#1{\langle#1\rangle}%
\global\long\def\cbrac#1{\{#1\}}%

\global\long\def\gl#1{\operatorname{GL}_{#1}}%

\global\long\def\sl#1{\operatorname{SL}_{#1}}%

\global\long\def\so#1{\operatorname{SO}_{#1}}%

\global\long\def\ort#1{\operatorname{O}_{#1}}%

\global\long\def\pgl#1{\operatorname{PGL}_{#1}}%

\global\long\def\po#1{\operatorname{PO}_{#1}}%

\global\long\def\wc{Q}%

\global\long\def\interior#1{\operatorname{int}\left(#1\right)}%

\global\long\def\dist#1{\operatorname{dist}\brac{#1}}%

\global\long\def\diag#1{\operatorname{diag}\brac{#1}}%

\global\long\def\rank#1{\operatorname{rank}\left(#1\right)}%

\global\long\def\covol#1{\operatorname{covol}\brac{#1}}%

\global\long\def\sym{\mbox{Sym}}%

\global\long\def\sp#1#2{\mbox{span}_{#1}\brac{#2}}%

\global\long\def\id{\operatorname{id}}%

\global\long\def\idmat#1{\operatorname{I}_{#1}}%

\global\long\def\Ad#1{\operatorname{Ad}_{#1}}%

\global\long\def\comp#1{\operatorname{#1}^{c}}%

\global\long\def\lieG{\mathfrak{g}}%
\global\long\def\lieA{\mathfrak{a}}%
\global\long\def\lieN{\mathfrak{n}}%

\global\long\def\lieNelement{Z}%
\global\long\def\dimN{p}%

\global\long\def\topindex{q}%

\global\long\def\nbhd#1#2{\mathcal{O}_{#1}^{#2}}%

\global\long\def\symfund#1{F_{#1}}%
\global\long\def\groupfund#1{\widetilde{F_{#1}}}%

\global\long\def\Gset{\mathcal{B}}%
\global\long\def\symset{\mathcal{E}}%

\global\long\def\ball#1{B_{#1}}%

\global\long\def\sphere#1{\mathbb{S}^{#1}}%

\global\long\def\gras#1{\operatorname{Gr}(#1)}%

\global\long\def\latspace#1{\mathcal{L}_{#1}}%

\global\long\def\shapespace#1{\mathcal{X}_{#1}}%

\global\long\def\pairspace#1{\mathcal{P}_{#1}}%

\global\long\def\roundo{r}%

\global\long\def\fam#1#2{#1_{#2}}%

\global\long\def\smeq{\overset{_{\text{s.m.}}}{\simeq}}%

\global\long\def\lat{\Lambda}%
\global\long\def\latfull{\lat_{0}}%

\global\long\def\perpen#1{#1^{\perp}}%

\global\long\def\factor#1{#1^{\pi}}%

\global\long\def\doublearrowdown{\rotatebox[origin=c]{315}{\ensuremath{\longleftrightarrow}}}%
\global\long\def\doublearrowup{\rotatebox[origin=c]{45}{\ensuremath{\longleftrightarrow}}}%

\global\long\def\volmin{V_{\text{min}}}%
\global\long\def\volmax{V_{\text{max}}}%
\global\long\def\radmax{R}%

\global\long\def\fdomN{\mathscr{D}}%
\global\long\def\domN{{\cal D}}%

\global\long\def\minset{\Delta\symset_{T}}%

\title{A practical guide to well roundedness}
\author{Tal Horesh\thanks{IST Austria, \texttt{tal.horesh@ist.ac.at}; supported by EPRSC grant EP/P026710/1.} 
\and Yakov Karasik\thanks{Institut für  Algebra und Geometrie, KIT, Karlsruhe, Germany \texttt{theyakov@gmail.com}.}}
\maketitle
\begin{abstract}
Let $G$ be a semisimple algebraic group. We develop a machinery for
manipulation and manufacture of well-rounded families $\left\{ \Gset_{T}\right\} _{T>0}\subset G$
as they were defined in a work by A.\ Gorodnik and A.\ Nevo. The
importance of these types of families is that one can asymptotically
count lattice points in them and even obtain an error term. Lattice
counting is highly effective for solving asymptotic problems from
number theory and the geometry of numbers.

The tools we develop are handy especially when the family is given
w.r.t.\ some decomposition of $G$ (e.g. Iwasawa or Cartan) and also
when it depends upon a sub-quotients of the form $\mathcal{M}/H$,
where $\mathcal{M}\subset G$ is a submanifold and $H<G$ is a closed
subgroup. 
\end{abstract}

\section{Introduction}

This text came to life out from the authors' work \cite{HK_gcd,HK_dlattices,HK_flags}
on equidistrbution problems in geometry of numbers. In the course
of our work we relied significantly on counting lattice point results
in semisimple algebraic groups. The classical setting for counting
lattice points problems is $\RR^{d}$, where in order to establish
an asymptotic formula for the number of lattice points inside an increasing
family of compact subsets of $\RR^{d}$, it is required that the boundary
of the sets satisfies certain regularity conditions (e.g. smoothness,
finite non vanishing curvature, etc.). With the rise of interest in
counting problems in semisimple groups and their affine spaces, e.g.
hyperbolic spaces (e.g. \cite{EM93}), where the boundary of a set
has fundamentally different properties than in $\RR^{d}$, a new notion
of regularity for a family $\left\{ \Gset_{T}\right\} $ was born:
\emph{well roundedness}. It was introduced in \cite{EM93} (although
a similar idea appeared already in \cite{DRS93}), was used e.g. in
\cite{GW07}, and then defined again in \cite{GN1}, for the general
setting of lcsc groups. In the latter, the concept of well roundedness
was further refined to Lipschitz well roundedness: 
\begin{defn}[\cite{GN1}]
\label{def: well--roundedness}Let $G$ be a locally compact second
countable group with a Borel measure $\mu$, and let $\left\{ \nbhd{\e}{}\right\} _{\e>0}$
be a family of identity neighborhoods in $G$. Assume $\left\{ \Gset_{T}\right\} _{T>0}\subset G$
is a family of measurable domains and denote 
\[
\Gset_{T}^{\left(+\e\right)}:=\nbhd{\e}{}\Gset_{T}\nbhd{\e}{}=\bigcup_{u,v\in\nbhd{\e}{}}u\,\Gset_{T}\,v,
\]
\[
\Gset_{T}^{\left(-\e\right)}:=\bigcap_{u,v\in\nbhd{\e}{}}u\,\Gset_{T}\,v.
\]
 The family $\left\{ \Gset_{T}\right\} $ is \emph{Lipschitz well-rounded
(LWR)} with (positive) parameters $\left(\mathcal{C},T_{0},\e_{0}\right)$
if for every $0<\e<\e_{0}$ and $T>T_{0}$: 
\begin{equation}
\mu\left(\Gset_{T}^{\left(+\e\right)}\right)\leq\left(1+\mathcal{C}\e\right)\:\mu\left(\Gset_{T}^{\left(-\e\right)}\right).\label{eq:LWReq}
\end{equation}
The parameter $\mathcal{C}$ is called the \emph{Lipschitz constant}
of the family $\left\{ \Gset_{T}\right\} $. 
\end{defn}

The results in \cite{GN1} show that under some conditions on $G$,
which originate from representation theory, one can get an estimate
with an error term of the size of the set $\Gamma\cap\text{\ensuremath{\Gset_{T}}}$,
where $\Gamma<G$ is a lattice and $T\to\infty$. To be able to use
these results to solve concrete problems, one should have an ample
supply of LWR sets as well as a mechanism for checking that a certain
family is indeed well rounded. This is the first aim of this work. 

Despite their name, simple (and semisimple) groups are quite complicated
objects, and they are often studied via their decompositions into
``less complicated'' groups. A decomposition is when a group (or
some ``big'' subset of it) is written as a product of certain subgroups,
e.g. the Cartan, Iwasawa and Bruhat decompositions. In the context
of counting lattice points, many natural counting problems translate
into counting lattice points in families of increasing sets inside
semisimple Lie groups, that are defined via a decomposition of the
group. As a baby example, consider the Cartan decomposition $KAK$
of $\so{1,n}^{0}\left(\RR\right)$, with $K=\so n\left(\RR\right)$
and $A=\left\{ a_{t}:t\in\RR\right\} $, $a_{t}=\left[\begin{smallmatrix}\cosh t & 0 & \sinh t\\
0 & I_{n-2} & 0\\
\sinh t & 0 & \cosh t
\end{smallmatrix}\right]$. The set $\Gset_{T}=\left\{ Ka_{t}K:0\leq t\leq T\right\} $ is the
lift of a hyperbollic ball of radius $T$ in $\HH^{n}\simeq K\backslash\so{1,n}^{0}\left(\RR\right)$,
and so counting lattice points in the family $\left\{ \Gset_{T}\right\} _{T>0}$
is essentially equivalent to the hyperbolic sphere problem \cite{Lax_Phillips,Phillips_Rudnick},
first stated by Selberg, which concerns counting lattice orbit points
in increasing hyperbolic balls. We refer to \cite{GN1} (see also
\cite{GOS_wavefront}) for counting in sets that are defined via the
Cartan decomposition, to \cite{MMO14} for counting in sets that are
defined via the Bruhat decomposition, and to \cite{HN16_Counting}
for counting in sets that are defined via the Iwasawa decomposition,
which is also the setting in our aforementioned ongoing work, for
which this text was written. 

When verifying the well roundedness of a family of sets inside a semisimple
Lie group, it is therefore quite natural to consider a suitable decomposition
of the group, and then try and reduce the verification to the that
of well roundedness of the projections of the family in each one of
the subgroups that appear in the decomposition. The logic being that
these subgroups are easier to analyze since they are compact, or abelian,
or unipotent, etc. In the example with the lifts of hyperbolic balls,
one would like to reduce the well roundedness of the family $\left\{ \Gset_{T}\right\} _{T>0}$
to well roundedness of $\left\{ a_{t}:0\leq t\leq T\right\} _{T>0}$
inside the subgroup $A$ (and of the constant family $\left\{ K\right\} $
inside $K$, which is trivial). However, it is false that well roundedness
in the components of the decomposition implies well roundedness of
the original family in the group, so this reduction cannot be implemented
without  further thought. We have considered a systematic approach
to this problem, which can be also applied in other situation as well.
The idea is a categorical approach of defining morphisms between groups,
called \textbf{roundomorphisms}, which pull back a well rounded family
in the image into a well rounded family in the domain.

When it comes to counting problems in a semisimple group, many families
of interest have the property that their projections to one or more
of the components is a fixed set. A simple example is the the projection
to the $K$ components of the lifted hyperbolic balls, but in fact
such families arise naturally in equidistribution problems (see \cite{Truelsen,HN16_Counting}
and our aforementioned work in progress). Since our method relies
on well roundedness in the components, it is helpful to formulate
a condition that is easier to verify than the one of well roundedness
itself, yet implies well roundedness for a constant family of sets;
by that we mean $\Gset_{T}=\Gset$ for all $T$. We propose the following:
\begin{defn}
Let $\manifold$ be an orbifold. A subset $B$ of $\manifold$ is
called a Boundary Controllable Set, or BCS, if for every $x\in\mathcal{M}$
there is an open neighborhood $U_{x}$ of $x$ such that $U_{x}\cap\del B$
is contained in a finite union of embedded submanifolds of $\mathcal{M}$.
\end{defn}

\begin{rem}
\label{rem:GlobalBCS}In most cases (e.g. when $\mathcal{M}$ is a
manifold and $B$ is bounded) the global version of the previous definition
is sufficient. That means one can take for every point $x\in\mathcal{M}$
the open set $\mathcal{M}$, and so it is sufficient to check that
the boundary of $B$ is contained in a finite number of embedded submanifolds
with dimension strictly smaller than $\dim\left(\manifold\right)$.
\end{rem}

Orbifolds arise naturally as a quotient of a submanifold of $G$ by
a group with almost free stabilizers. One example would be the space
$\so n\left(\RR\right)\backslash\sl n\left(\RR\right)/\sl n\left(\ZZ\right)$
which bears great significance in geometry of numbers, being the space
of \emph{shapes} of lattices, and when $n=2$ it is the modular curve.
Our aforementioned work in progress concerns equidistribution in spaces
of lattices that have the structure of an orbifold, e.g. the space
of shapes. Since our approach is lifting the counting problem from
the quotient to the group $G$, where we count in well rounded families,
we are forced to transfer BCS's from the quotient space into some
convenient fundamental domain inside the group. It turns out that
not every fundamental domain is adequate, and in Section \ref{sec: spread models}
we characterize the fundamental domains that are. We call them \textbf{spread
models}, since we think of them as fundamental domains that are obtained
by cutting the space open and spreading it - here one can think of
cutting a two dimensional torus into a parallelogram. We hope that
this section could be of further interest in the future, since it
is essentially a discussion on fundamental domains for which the quotient
map into the space the domain represents, pulls back differential
properties from the space to the domain. This investigation is the
second main goal of this work. 

Finally in the last section \ref{subsec: Siegel reduced bases}, we
provide some examples for spread models. In particular, we introduce
the well known (\cite{Grenier_93,Schmidt_98}) constructions of fundamental
domains coming from geometry of numbers and show that they are indeed
spread models for the spaces of lattices that they represent.

\section{Coordinate balls}

Definition \ref{def: well--roundedness} is w.r.t.\ a nested family
$\left\{ \nbhd{\e}{}\right\} _{\e>0}$ of identity neighborhoods in
the group, where by ``nested'' we mean that $\e_{1}<\e_{2}$ implies
$\nbhd{\e_{1}}{}\subset\nbhd{\e_{2}}{}$. While the definition of
well roundedness  allows any nested family of identity neighborhoods,
we shall work only with neighborhoods that are the images of small
balls in the Lie algebra under the exponent map --- this is Assumption
\ref{assu: Our O_e }, which concludes the current subsection. The
advantages of this choice follow from the fact that it is a special
case of \emph{coordinate balls} (Definition \ref{def: coordinate balls}),
and this subsection is devoted to investigating the properties of
neighborhoods of this sort. 
\begin{defn}[Equivalence of identity neighborhoods]
\label{def: equivalence of nbhds}Let $G$ be a Lie group and consider
two families $\left\{ \nbhd{\e}{}\right\} _{\e>0},\left\{ \nbhd{\e}{\prime}\right\} _{\e>0}$
of nested and symmetric identity neighborhoods. We say that these
families are \emph{equivalent} if there exist $\e_{1},c,C>0$ such
that for every $0<\e<\e_{1}$
\[
\nbhd{c\e}{}\subseteq\nbhd{\e}{\prime}\subseteq\nbhd{C\e}{}.
\]
\end{defn}

\begin{defn}[Coordinate balls]
\label{def: coordinate balls}A family $\left\{ \nbhd{\e}{}\right\} _{\e>0}$
of identity neighborhoods inside a Lie group $G$ will be called a
family of \emph{coordinate balls} if there exist a ball $\ball{\e}=\left\{ x\in\RR^{\dim\left(G\right)}:\norm x<\e\right\} $
inside $\RR^{\dim\left(G\right)}$, and a $C^{1}$ chart 
\[
\phi:\exd U{1_{G}\in}\to\RR^{m}
\]
of the identity, such that $\left\{ \phi^{-1}\left(\ball{\e}\right)\right\} _{\e>0}$
is equivalent to $\left\{ \nbhd{\e}{}\right\} _{\e>0}$. 
\end{defn}

\begin{rem}
\label{rem: coordinate balls equivalent}All coordinate balls of a
given Lie group are equivalent. Indeed, if $\phi_{1}$ and $\phi_{2}$
are two charts, then $\phi_{2}\phi_{1}^{-1}|_{\ball 1}$ is a \emph{bi-Lipschitz
map. }Hence, \emph{
\[
\phi_{2}^{-1}\left(\ball{c\e}\right)\subseteq\underset{\phi_{1}^{-1}\left(\ball{\e}\right)}{\underbrace{\phi_{2}^{-1}\left(\phi_{2}\phi_{1}^{-1}\left(\ball{\e}\right)\right)}}\subseteq\phi_{2}^{-1}\left(\ball{C\e}\right)
\]
 for some $c,C>0$ and $\e<1$.}
\end{rem}

The following Lemma specifies two useful features of coordinate balls. 
\begin{lem}
\label{lem: Connectivity + additivity of coord. balls}Let $\left\{ \nbhd{\e}{}\right\} _{\e>0}$
be a family of coordinate balls inside a Lie group $G$. Then for
small enough $\e$ and $\delta$, the following two properties hold:
\begin{itemize}
\item (\textbf{Connectivity}) $\nbhd{\e}{}$ is a connected subset of $G$.
\item (\textbf{Additivity}) There exists $c>0$ such that:
\[
\nbhd{\e}{}\nbhd{\dl}{}\subseteq\nbhd{c\left(\e+\delta\right)}{}.
\]
\end{itemize}
\end{lem}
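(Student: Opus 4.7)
The plan is to reduce both claims to the model coordinate ball family obtained from the exponential chart $\phi = \exp^{-1}$ on a neighborhood of the identity, and then transfer the conclusions back via Remark \ref{rem: coordinate balls equivalent}. Writing $\mathcal{O}_\e^{\exp} := \exp(B_\e \cap U')$ for a small Lie-algebra ball on which $\exp$ is a diffeomorphism onto its image, Remark \ref{rem: coordinate balls equivalent} gives constants $c',C'>0$ such that $\mathcal{O}_{c'\eta}^{\exp} \subseteq \mathcal{O}_\eta \subseteq \mathcal{O}_{C'\eta}^{\exp}$ for all small $\eta$. So it suffices to establish Connectivity and Additivity for $\mathcal{O}_\e^{\exp}$; Additivity will then transfer as a sandwich, and Connectivity for the model already gives that $\mathcal{O}_\eta$ contains a connected identity neighborhood $\mathcal{O}_{c'\eta}^{\exp}$ (which is the property actually used in later arguments).

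\textbf{Connectivity (model case).} The Euclidean ball $B_\e \subset \RR^{\dim G}$ is convex, hence path-connected, and $\exp$ is continuous, so $\mathcal{O}_\e^{\exp} = \exp(B_\e)$ is connected for every $\e$ small enough that $B_\e$ lies in the domain of the exponential chart.

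\textbf{Additivity (model case).} This is a Baker--Campbell--Hausdorff computation. Fix a submultiplicative norm $\norm{\cdot}$ on $\lieG$ (so $\norm{[X,Y]} \leq K \norm{X}\norm{Y}$), and let $X \in B_\e$, $Y \in B_\dl$ be small. On a neighborhood of the origin,
\[
\exp(X)\exp(Y) = \exp\bigl(X + Y + \tfrac{1}{2}[X,Y] + R(X,Y)\bigr),
\]
where the BCH remainder $R$ is a convergent series of iterated brackets of total degree $\geq 3$, and satisfies $\norm{R(X,Y)} \leq K'\norm{X}\norm{Y}(\norm{X}+\norm{Y})$ for $\norm{X}+\norm{Y}$ below some universal threshold. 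Combining with $\norm{[X,Y]}\leq K\norm{X}\norm{Y}$ and the elementary inequality $\norm{X}\norm{Y} \leq \tfrac{1}{4}(\norm{X}+\norm{Y})^2$, one obtains
\[
\norm{X + Y + \tfrac{1}{2}[X,Y] + R(X,Y)} \leq (\e + \dl)\bigl(1 + c_0(\e+\dl)\bigr)
\]
for some constant $c_0$ depending only on $(\lieG,\norm{\cdot})$. Choosing $\e+\dl$ small enough that $c_0(\e+\dl)\leq 1$ yields $\mathcal{O}_\e^{\exp}\, \mathcal{O}_\dl^{\exp} \subseteq \mathcal{O}_{2(\e+\dl)}^{\exp}$, i.e.\ additivity with $c = 2$ for the model. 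Transferring through the equivalence gives
\[
\mathcal{O}_\e\, \mathcal{O}_\dl \subseteq \mathcal{O}_{C'\e}^{\exp}\,\mathcal{O}_{C'\dl}^{\exp} \subseteq \mathcal{O}_{2C'(\e+\dl)}^{\exp} \subseteq \mathcal{O}_{(2C'/c')(\e+\dl)},
\]
which is the required additivity with constant $c = 2C'/c'$.

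The only delicate step is the BCH bound; making $K'$ explicit requires a convergence estimate on the BCH series, but this is a standard Lie-theoretic fact and does not depend on $G$ beyond its Lie algebra structure. Everything else is either topological (for Connectivity) or formal manipulation of the sandwich given by equivalence.
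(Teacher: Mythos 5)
Your proof is correct in substance, but it takes a genuinely different route from the paper's on the additivity step, and has a small wrinkle on connectivity worth flagging.

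For \textbf{additivity}, the paper does not invoke Baker--Campbell--Hausdorff at all. It instead observes that left-invariant Riemannian metric balls satisfy additivity with $c=1$ by the triangle inequality (if $d(e,g)<\e$ and $d(e,h)<\dl$ then $d(e,gh)\leq d(e,g)+d(g,gh)=d(e,g)+d(e,h)<\e+\dl$), notes that these Riemannian balls form a family of coordinate balls via the Riemannian exponential chart, and then transfers the conclusion to an arbitrary coordinate-ball family by the equivalence of Remark~\ref{rem: coordinate balls equivalent}. Your BCH computation lands in the same place (via the Lie exponential chart rather than the Riemannian one), but it is heavier: it requires a submultiplicativity constant, a convergence estimate on the BCH remainder, and a small-radius restriction to absorb the quadratic term, all of which the Riemannian-metric argument avoids. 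Both approaches are valid, and both rely on the same final step — sandwiching via equivalence of coordinate-ball families — so the structure is parallel even though the model family and the underlying inequality differ.

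For \textbf{connectivity}, you prove connectedness of $\mathcal{O}_\e^{\exp}$ and then remark that the sandwich only yields that $\mathcal{O}_\e$ \emph{contains} a connected identity neighborhood, hedging that this is ``the property actually used in later arguments.'' This hedge is a genuine (if minor) gap: Lemma~\ref{lem:B_plus B_minus} uses connectedness of $\nbhd{\e}{}g\nbhd{\e}{}$ itself, not merely that $\nbhd{\e}{}$ contains a connected piece, and equivalence of families does not preserve connectedness. The intended reading (and the paper's) is that a coordinate ball $\nbhd{\e}{}$ is literally $\phi^{-1}(\ball\e)$ for some chart $\phi$ — so one simply argues that $\nbhd{\e}{}$ is the continuous image of the convex set $\ball\e$ under the homeomorphism $\phi^{-1}$, hence connected. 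Your argument for the model case is exactly this; you should apply it directly to $\nbhd{\e}{}$ via its defining chart rather than routing through the exponential model and a sandwich that loses the conclusion.
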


\begin{proof}
Connectivity holds since $\phi^{-1}$ ($\phi$ being the associated
chart) is continuous. Additivity holds for Riemannian left $G$-invariant
balls with $c=1$ (triangle inequality); these Riemannian balls are
indeed coordinate balls, where the implied chart is the Riemannian
exponential map. Since all families of coordinate balls are equivalent
(Remark \ref{rem: coordinate balls equivalent}), the statement follows. 
\end{proof}
One last property of coordinate balls is the following. \ref{lem: Connectivity + additivity of coord. balls}
\begin{prop}
\label{prop: multplication in balls is addition in coordinates}Let
$\left\{ \nbhd{\e}{}\right\} _{\e>0}$ be a family of coordinate balls
inside a Lie group $G$, and assume 
\[
\phi:\exd U{g\in}\to\RR^{m}
\]
is a chart that contains an element $g$. Then, there exist an open
set $g\in V\subset U$ and positive $\e\left(g\right),c\left(g\right)$
such that for $\e\leq\e\left(g\right)$:
\[
\nbhd{\e}{}V\nbhd{\e}{}\subseteq U
\]
and for every $h\in\overline{V}$
\[
\phi\left(\nbhd{\e}{}h\nbhd{\e}{}\right)\subseteq\phi\left(h\right)+\ball{c\left(g\right)\e}.
\]
\end{prop}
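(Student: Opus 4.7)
The plan is to reduce the statement to a local Lipschitz estimate for the composition of the multiplication map with the charts, where the main input is the continuous differentiability of $\phi$ together with smoothness of multiplication in $G$.

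First I would choose a compact neighborhood $\overline{V}$ of $g$ with $\overline{V}\subset U$ and use continuity of multiplication at the identity together with compactness of $\overline{V}$ to produce an open identity neighborhood $W$ with $W\,\overline{V}\,W\subset U$. Since $\{\nbhd{\e}{}\}_{\e>0}$ is nested and is equivalent (Remark \ref{rem: coordinate balls equivalent}) to $\{\phi_{0}^{-1}(\ball{\e})\}$ for some chart $\phi_{0}$ at $1_{G}$, we have $\nbhd{\e}{}\subset W$ for all $\e$ below some threshold $\e(g)$, which yields the first inclusion $\nbhd{\e}{}V\nbhd{\e}{}\subseteq U$.

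Next I would define, on a small product of coordinate neighborhoods, the auxiliary map
\[
\Phi(x,y,z)\;:=\;\phi\!\left(\phi_{0}^{-1}(x)\cdot\phi^{-1}(y)\cdot\phi_{0}^{-1}(z)\right)-y,
\]
which is a composition of the $C^{1}$ maps $\phi,\phi^{-1},\phi_{0}^{-1}$ with the smooth multiplication of $G$, hence $C^{1}$ on its domain. By construction $\Phi(0,y,0)=0$ for every $y\in\phi(\overline{V})$. The fundamental theorem of calculus applied along the segment $t\mapsto(tx,y,tz)$ gives
\[
\Phi(x,y,z)\;=\;\int_{0}^{1}\bigl(D_{x}\Phi\,(tx,y,tz)\cdot x+D_{z}\Phi\,(tx,y,tz)\cdot z\bigr)\,dt,
\]
so $\|\Phi(x,y,z)\|\leq L(\|x\|+\|z\|)$ with $L$ equal to the supremum of $\|D_{x}\Phi\|+\|D_{z}\Phi\|$ over the compact product $\ball{\e(g)}\times\phi(\overline{V})\times\ball{\e(g)}$ (after shrinking $\e(g)$ if necessary). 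Compactness of $\phi(\overline{V})$ and continuity of the derivatives guarantees that $L<\infty$, uniformly in $h\in\overline{V}$.

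Finally, invoking the equivalence of coordinate balls once more, there exists $C>0$ such that every $u\in\nbhd{\e}{}$ satisfies $\|\phi_{0}(u)\|\leq C\e$ for $\e<\e(g)$. Substituting $x=\phi_{0}(u)$, $y=\phi(h)$, $z=\phi_{0}(v)$ into the bound on $\Phi$ yields
\[
\bigl\|\phi(uhv)-\phi(h)\bigr\|\;\leq\;2LC\e,
\]
so the conclusion holds with $c(g):=2LC$. The only delicate point is the uniformity in $h\in\overline{V}$: this is where compactness of $\overline{V}$ and the $C^{1}$ regularity of $\phi$ (to make $\phi^{-1}$ locally $C^{1}$ and thereby $\Phi$ locally $C^{1}$) are both essential, but no other obstacle arises.
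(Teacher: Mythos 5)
Your proof is correct, but it follows a genuinely different route from the one in the paper. The paper first uses the conjugation lemma (Lemma \ref{lem: Conjugation inflates by the norm of Ad}) to move the left-side perturbation to the right, writing $\nbhd{\e}{}h\nbhd{\e}{} = h\cdot(h^{-1}\nbhd{\e}{}h)\cdot\nbhd{\e}{}\subseteq h\cdot\nbhd{c_{1}(h)\e}{}$ via additivity (Lemma \ref{lem: Connectivity + additivity of coord. balls}), and then applies the Lipschitz estimate only to the single-variable map $\psi_{h}=\phi\circ L_{h}\circ\exp$; the uniformity in $h$ comes from compactness of $D_{g}=\overline{\nbhd{1}{}g\nbhd{1}{}}$ and continuity of $\norm{\Ad{}(\cdot)}_{\text{op}}$. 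You instead absorb both perturbations at once into the $C^{1}$ map $\Phi(x,y,z)=\phi(\phi_{0}^{-1}(x)\phi^{-1}(y)\phi_{0}^{-1}(z))-y$ and run the fundamental theorem of calculus along the segment $t\mapsto(tx,y,tz)$, exploiting $\Phi(0,y,0)=0$. Your argument is more self-contained — it bypasses the Ad inflation lemma and the additivity lemma entirely, needing only smoothness of group multiplication and the equivalence of coordinate balls — while the paper's version has the advantage of reusing its stock lemmas, which are also needed elsewhere. Both arguments ultimately rest on the same fact (a $C^{1}$ map has a uniform Lipschitz constant on a compact domain); yours just packages the two translations into a single two-sided estimate rather than reducing to a one-sided one.
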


The proof requires an auxiliary lemma:
\begin{lem}[\cite{HN16_Counting}]
\label{lem: Conjugation inflates by the norm of Ad} Let $G$ be
a Lie group with Lie algebra $\lieG$. For $\nbhd{\e}{}=\exp\left(\ball{\e}\right)$
 and every $g\in G$, 
\[
g^{-1}\,\nbhd{\e}{}\,g\subseteq\nbhd{\e\cdot\left\Vert \Ad g\right\Vert _{\text{op}}}{}=\exp\left\{ \lieNelement\in\lieG:\left\Vert \lieNelement\right\Vert \leq\e\cdot\left\Vert \Ad g\right\Vert _{\text{op}}\right\} ,
\]
where $\norm{\cdot}$ is any euclidean norm on $\lieG$ and $\left\Vert \cdot\right\Vert _{\mbox{op}}$
is the norm on the space of linear operators on $\lieG$. 
\end{lem}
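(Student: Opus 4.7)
The plan is to unpack the statement directly, using only the compatibility of the exponential map with conjugation (a general Lie group fact). Since the hypothesis restricts to the special case $\nbhd{\e}{}=\exp(\ball{\e})$, the coordinate ball structure plays no role beyond this identification, and the entire argument takes place at the level of $\lieG$.

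First I would parametrize: any $u\in\nbhd{\e}{}$ can be written as $u=\exp(Z)$ with $Z\in\lieG$ and $\norm{Z}\leq\e$. The target set $\nbhd{\e\cdot\norm{\Ad g}_{\text{op}}}{}=\exp\cbrac{W\in\lieG:\norm{W}\leq\e\cdot\norm{\Ad g}_{\text{op}}}$ is characterized by the bound on the Lie algebra representative, so proving the inclusion reduces to exhibiting, for each such $Z$, a vector $W\in\lieG$ with $g^{-1}u g=\exp(W)$ and $\norm{W}\leq\e\cdot\norm{\Ad g}_{\text{op}}$.

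The key identity I would invoke is that conjugation $c_g:h\mapsto ghg^{-1}$ is a Lie group automorphism of $G$ with differential $\Ad g$ at the identity, and that the exponential map is natural with respect to such homomorphisms. Hence $g\,\exp(Z)\,g^{-1}=\exp(\Ad g\,Z)$, which, upon replacing $g$ by $g^{-1}$, gives
\[
g^{-1}\exp(Z)\,g=\exp\brac{\Ad_{g^{-1}}Z}.
\]
Setting $W:=\Ad_{g^{-1}}Z$ and using the definition of the operator norm together with the fact that $\Ad_{g^{-1}}=(\Ad g)^{-1}$ as linear maps on $\lieG$, I would bound
\[
\norm{W}\leq\norm{\Ad_{g^{-1}}}_{\text{op}}\,\norm{Z}\leq\norm{\Ad_{g^{-1}}}_{\text{op}}\,\e,
\]
and then conclude with the stated inclusion.

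The only point requiring care is a convention/notation clarification rather than a genuine obstacle: the statement phrases the dilation factor as $\norm{\Ad g}_{\text{op}}$, while the direct computation naturally produces $\norm{\Ad_{g^{-1}}}_{\text{op}}$. I would address this either by fixing the convention $\Ad g\mapsto\Ad_{g^{-1}}$ used for left conjugation (which is consistent with the way $g^{-1}\nbhd{\e}{}g$ is written), or by noting that since the lemma is applied to a set-inclusion bound, one may relabel $g\leftrightarrow g^{-1}$ freely. Beyond this cosmetic point, the proof is a one-line computation with no technical subtleties.
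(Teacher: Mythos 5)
The paper does not prove this lemma --- it imports it verbatim from \cite{HN16_Counting} --- so there is no in-paper argument to compare against. Your proof is the standard (and correct) one: writing $u=\exp(Z)$ with $\norm{Z}\leq\e$ and invoking naturality of $\exp$ under the automorphism $h\mapsto g^{-1}hg$ gives $g^{-1}ug=\exp(\Ad{g^{-1}}Z)$, and the operator-norm bound does the rest; since membership in the target set only requires exhibiting \emph{some} logarithm of the prescribed size, no injectivity of $\exp$ is needed. You are also right that the honest constant produced this way is $\norm{\Ad{g^{-1}}}_{\text{op}}$ rather than $\norm{\Ad g}_{\text{op}}$ under the usual convention $\Ad g=d(c_g)_e$ with $c_g(h)=ghg^{-1}$; this is a pure convention/labelling issue, and it is immaterial everywhere the lemma is used in the paper (e.g.\ in the proof of Proposition \ref{prop: multplication in balls is addition in coordinates}), where only continuity of $g\mapsto\norm{\Ad g}_{\text{op}}$ and compactness are exploited. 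No gaps.
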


\begin{proof}[Proof of Proposition \ref{prop: multplication in balls is addition in coordinates}]
 Observe that by the previous lemma and the additivity property
in Lemma \ref{lem: Connectivity + additivity of coord. balls}, for
every $h\in G$ there is a constant $c_{1}\left(h\right)$ such that
for $0<\e\leq\e_{1}\left(h\right)$:
\[
\phi\left(\nbhd{\e}{}h\nbhd{\e}{}\right)=\phi\left(h\cdot\underset{\subseteq\nbhd{\e\cdot\left\Vert \Ad h\right\Vert _{\text{op}}}{}}{\underbrace{h^{-1}\nbhd{\e}{}h}}\cdot\nbhd{\e}{}\right)\subseteq\phi\left(h\cdot\nbhd{c_{1}\left(h\right)\e}{}\right)=\underset{\psi_{h}}{\underbrace{\phi\circ L_{h}\circ\exp}}\left(\ball{c_{1}\left(h\right)\e}\right),
\]
where $L_{h}:G\to G$ is the left translation by $h$. By compactness
of $D_{g}:=\overline{\nbhd 1{}g\nbhd 1{}}$ and continuity of $\left\Vert \Ad{}\left(\cdot\right)\right\Vert _{\text{op}}$
, there exist $c_{0}\left(g\right)$ and $\e_{0}\left(g\right)$
for which the above holds uniformly on $D_{g}$, namely for every
$h\in D_{g}$ and $0<\e\leq\e_{0}\left(g\right):$
\[
\phi\left(\nbhd{\e}{}h\nbhd{\e}{}\right)\subseteq\psi_{h}\left(\ball{c_{0}\left(g\right)\e}\right).
\]
 We choose $\e\left(g\right)>0$ and $0<\delta<1$ small enough so
that (using the additivity property again) for $V:=\nbhd{\dl}{}g\nbhd{\dl}{}$
and $0<\e<\e\left(g\right)$ we have
\[
\nbhd{\e}{}V\nbhd{\e}{}=\nbhd{\e}{}\nbhd{\dl}{}g\nbhd{\dl}{}\nbhd{\e}{}\subset\nbhd{\e_{0}\left(g\right)}{}g\nbhd{\e_{0}\left(g\right)}{}.
\]
We also assume $\e_{0}\left(g\right)$ is small enough such that $\nbhd{\e_{0}\left(g\right)}{}g\nbhd{\e_{0}\left(g\right)}{}\subset U$. 

Since $\psi\left(h,x\right)=\psi_{h}\left(x\right)$ is a differentiable
map defined on a compact domain  $\overline{U}\times\overline{\ball{c_{0}\left(g\right)\e}}$,
 there exists $c\left(g\right)=c\left(D_{g}\right)>0$ such that
for every $h\in U$ and $x\in\ball{c_{0}\left(g\right)\e}$:
\[
\norm{\psi_{h}\left(x\right)-\psi_{h}\left(0\right)}\leq c\left(g\right)\norm{x-0}.
\]
Hence,
\[
\psi_{h}\left(\ball{c_{0}\left(g\right)\e}\right)\subseteq\psi_{h}\left(0\right)+\ball{c\left(g\right)\e}=\phi\left(h\right)+\ball{c\left(g\right)\e}.
\]
\end{proof}
Finally, we fix a choice of coordinate balls that will be used from
now on.
\begin{assumption}
\label{assu: Our O_e }Unless specified otherwise we will assume that
$\nbhd{\e}{}=\exp\left(\ball{\e}\right)$, where $\exp$ is the Lie
exponent.
\end{assumption}

\section{Well rounded sets - criteria and properties}

This section is devoted to investigating the concept of well-roundedness
for \emph{constant} families, which are just fixed subsets of $G$:
$\Gset_{T}=\Gset$ for all $T$. It turns out that in the constant
case, the LWR property can be reduced to a boundary condition. This
enables us to obtain a huge class of Lipschitz well rounded (fixed)
sets, which are in fact the BCS's defined in the introduction. 
\begin{lem}
\label{lem:B_plus B_minus}Suppose $\left\{ \nbhd{\e}{}\right\} _{\e>0}$
is a family of coordinate balls, and let $\Gset\subseteq G$. Then
$\Gset^{+}\left(\e\right)\setminus\Gset^{-}\left(\e\right)=\nbhd{\e}{}\,\del\Gset\,\nbhd{\e}{}$,
or equivalently:
\[
\Gset^{\left(+\e\right)}=\Gset\cup\left(\nbhd{\e}{}\,\del\Gset\,\nbhd{\e}{}\right)
\]
 and 
\[
\Gset^{\left(-\e\right)}=\Gset\setminus\left(\nbhd{\e}{}\,\del\Gset\,\nbhd{\e}{}\right).
\]
\end{lem}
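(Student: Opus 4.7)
The plan is to reformulate membership in $\Gset^{(+\e)}$ and $\Gset^{(-\e)}$ as statements about the two-sided ``ball'' $\nbhd{\e}{}\,g\,\nbhd{\e}{}$, then deduce the boundary description from a pure connectedness argument. Since $\nbhd{\e}{}$ is symmetric ($\nbhd{\e}{}=\nbhd{\e}{}^{-1}$, by Definition \ref{def: equivalence of nbhds} and Assumption \ref{assu: Our O_e }), I observe:
\begin{align*}
g\in\Gset^{(+\e)} &\iff g\in\nbhd{\e}{}\,\Gset\,\nbhd{\e}{} \iff \nbhd{\e}{}\,g\,\nbhd{\e}{}\cap\Gset\neq\emptyset,\\
g\in\Gset^{(-\e)} &\iff \forall\,u,v\in\nbhd{\e}{},\ u^{-1}gv^{-1}\in\Gset \iff \nbhd{\e}{}\,g\,\nbhd{\e}{}\subseteq\Gset.
\end{align*}
Consequently, $g\in\Gset^{(+\e)}\setminus\Gset^{(-\e)}$ exactly when the open set $\nbhd{\e}{}\,g\,\nbhd{\e}{}$ meets both $\Gset$ and its complement $\Gset^{c}$. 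I would also note (for the derivation of the two displayed equalities at the end) the trivial inclusions $\Gset^{(-\e)}\subseteq\Gset\subseteq\Gset^{(+\e)}$, which follow because $1_{G}\in\nbhd{\e}{}$.

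Next, I would invoke the \textbf{Connectivity} clause of Lemma \ref{lem: Connectivity + additivity of coord. balls}: for $\e$ small enough $\nbhd{\e}{}$ is connected, hence so is $\nbhd{\e}{}\,g\,\nbhd{\e}{}$ (as the continuous image of $\nbhd{\e}{}\times\nbhd{\e}{}$ under $(u,v)\mapsto ugv$). A standard topological fact applies: if a connected set $C$ meets both $\Gset$ and $\Gset^{c}$ but avoids $\del\Gset$, then $C$ would split into the two nonempty relatively open pieces $C\cap\interior{\Gset}$ and $C\cap\interior{\Gset^{c}}$, a contradiction. Applying this to $C=\nbhd{\e}{}\,g\,\nbhd{\e}{}$ yields some $b\in\del\Gset$ with $b=ugv$ for $u,v\in\nbhd{\e}{}$, i.e.\ $g=u^{-1}bv^{-1}\in\nbhd{\e}{}\,\del\Gset\,\nbhd{\e}{}$ by symmetry.

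For the reverse inclusion, given $g=ubv$ with $b\in\del\Gset$ and $u,v\in\nbhd{\e}{}$, the symmetry of $\nbhd{\e}{}$ places $b=u^{-1}gv^{-1}$ inside the open set $\nbhd{\e}{}\,g\,\nbhd{\e}{}$. Since $b\in\overline{\Gset}\cap\overline{\Gset^{c}}$, every neighborhood of $b$ meets both $\Gset$ and $\Gset^{c}$, so $\nbhd{\e}{}\,g\,\nbhd{\e}{}\cap\Gset\neq\emptyset$ and $\nbhd{\e}{}\,g\,\nbhd{\e}{}\not\subseteq\Gset$, i.e.\ $g\in\Gset^{(+\e)}\setminus\Gset^{(-\e)}$. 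This establishes $\Gset^{(+\e)}\setminus\Gset^{(-\e)}=\nbhd{\e}{}\,\del\Gset\,\nbhd{\e}{}$, and the two displayed identities follow at once by combining this with the sandwich $\Gset^{(-\e)}\subseteq\Gset\subseteq\Gset^{(+\e)}$ (splitting $\Gset^{(+\e)}$ along membership in $\Gset$, resp.\ splitting $\Gset^{(-\e)}$ out of $\Gset$).

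The only subtle point, which I would flag as the main thing to be careful about, is the \emph{symmetry step} converting one-sided into two-sided neighborhoods: without $\nbhd{\e}{}=\nbhd{\e}{}^{-1}$ a naive attempt to build a path from $s\in\Gset$ to $g$ inside $\nbhd{\e}{}s\nbhd{\e}{}$ and peel off a boundary point would force the inflated constants $\nbhd{2\e}{}\,\del\Gset\,\nbhd{2\e}{}$. The trick is precisely to avoid paths and instead exploit that $\nbhd{\e}{}\,g\,\nbhd{\e}{}$ is itself connected and symmetric about $g$, so that the boundary point it necessarily contains can be pushed back to give $g$ with the advertised $\e$ (not $2\e$) on both sides.
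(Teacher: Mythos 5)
Your proof is correct and takes essentially the same route as the paper: reformulate membership in $\Gset^{(+\e)}$ and $\Gset^{(-\e)}$ via the two-sided ball $\nbhd{\e}{}\,g\,\nbhd{\e}{}$ using symmetry, then invoke connectivity of that ball to equate $\Gset^{(+\e)}\setminus\Gset^{(-\e)}$ with $\nbhd{\e}{}\,\del\Gset\,\nbhd{\e}{}$, and finally deduce the two displayed identities from the sandwich $\Gset^{(-\e)}\subseteq\Gset\subseteq\Gset^{(+\e)}$. The only cosmetic difference is that the paper proves the $\subseteq$ inclusion of the key identity by contraposition while you argue it directly via the connected-set-must-hit-the-boundary principle.
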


\begin{rem}
In fact, Lemma \ref{lem:B_plus B_minus} applies for any family $\left\{ \nbhd{\e}{}\right\} _{\e>0}$
of connected identity neighborhoods.
\end{rem}

\begin{proof}
We first show that 
\[
\Gset^{\left(+\e\right)}\setminus\Gset^{\left(-\e\right)}=\nbhd{\e}{}\,\del\Gset\,\nbhd{\e}{}.
\]
For the inclusion $\supseteq$, we must show that $\Gset^{\left(+\e\right)}\supseteq\nbhd{\e}{}\cdot\del\Gset\cdot\nbhd{\e}{}$
and that $\left(\nbhd{\e}{}\cdot\del\Gset\cdot\nbhd{\e}{}\right)\cap\Gset^{\left(-\e\right)}=\emptyset$.
For the first, assume $g\in\nbhd{\e}{}\cdot\del\Gset\cdot\nbhd{\e}{}$.
By symmetry of $\nbhd{\e}{}$, the open set $\nbhd{\e}{}\cdot g\cdot\nbhd{\e}{}$
intersects $\del\Gset$ non-trivially, and therefore meets $\Gset$,
say in a point $h$. Then (again by symmetry) $g\in\nbhd{\e}{}\cdot h\cdot\nbhd{\e}{}\subset\nbhd{\e}{}\Gset\nbhd{\e}{}$.
For the latter, note that $h\in\Gset^{\left(-\e\right)}$ if and only
if $h\in u\Gset v$ for all $u,v\in\nbhd{\e}{}$, i.e. if and only
if $u^{-1}hv^{-1}\in\Gset$ for all $u,v\in\nbhd{\e}{}$, which by
symmetry of $\nbhd{\e}{}$ is equivalent to $\nbhd{\e}{}\cdot h\cdot\nbhd{\e}{}\subset\Gset$.
Now if $g\in\nbhd{\e}{}\cdot\del\Gset\cdot\nbhd{\e}{}$ then as before
the open set $\nbhd{\e}{}\cdot g\cdot\nbhd{\e}{}$ intersects $\del\Gset$
non-trivially, and in particular meets $\comp{\Gset}$; then $\nbhd{\e}{}\cdot g\cdot\nbhd{\e}{}\not\subset\Gset$,
namely $g\notin\Gset^{\left(-\e\right)}$. 

For the inclusion $\subseteq$, let $g\notin\nbhd{\e}{}\del\Gset\nbhd{\e}{}$,
and we show that $g\notin\Gset^{\left(+\e\right)}\setminus\Gset^{\left(-\e\right)}$.
Namely, that either $g\in\Gset^{\left(-\e\right)}$ or that $g\in\comp{\brac{\Gset^{\left(+\e\right)}}}$.
Indeed, $g\notin\nbhd{\e}{}\del\Gset\nbhd{\e}{}$ implies that $\left(\nbhd{\e}{}g\nbhd{\e}{}\right)\cap\del\Gset=\emptyset$,
and since $\nbhd{\e}{}g\nbhd{\e}{}$ is connected it follows that
either $\nbhd{\e}{}g\nbhd{\e}{}\subseteq\Gset$ or $\nbhd{\e}{}g\nbhd{\e}{}\subseteq\comp{\Gset}$.
The first implies (by the equivalence established in the first inclusion)
that $g\in\Gset^{\left(-\e\right)}$. The latter implies that  $g\notin\nbhd{\e}{}\Gset\nbhd{\e}{}=\Gset^{\left(+\e\right)}$. 

The statement of the lemma now follows:
\[
\Gset^{\left(+\e\right)}=\Gset^{\left(-\e\right)}\sqcup\nbhd{\e}{}\del\Gset\nbhd{\e}{}\subseteq\Gset\cup\nbhd{\e}{}\del\Gset\nbhd{\e}{}
\]
where the opposite inclusion holds as  $\Gset^{\left(+\e\right)}\supseteq\nbhd{\e}{}\del\Gset\nbhd{\e}{}$.
Furthermore, 
\[
\Gset^{\left(-\e\right)}=\Gset^{\left(+\e\right)}\setminus\nbhd{\e}{}\del\Gset\nbhd{\e}{}=\left(\Gset\cup\nbhd{\e}{}\del\Gset\nbhd{\e}{}\right)\setminus\nbhd{\e}{}\del\Gset\nbhd{\e}{}=
\]
\[
=\left(\left(\Gset\setminus\nbhd{\e}{}\del\Gset\nbhd{\e}{}\right)\sqcup\nbhd{\e}{}\del\Gset\nbhd{\e}{}\right)\setminus\nbhd{\e}{}\del\Gset\nbhd{\e}{}=\Gset\setminus\nbhd{\e}{}\del\Gset\nbhd{\e}{}.
\]
\end{proof}
From Lemma \ref{lem:B_plus B_minus} we deduce the following simple
criterion for the Lipschitz well roundedness of a (fixed) set.
\begin{lem}
\label{lem: LWR for single set}Let $G$ be a Lie group with a Borel
measure $\mu$. If a subset $\Gset\subset G$ satisfies that $0<\mu\left(\Gset\right)<\infty$
and that there exists $c>0$ such that 
\[
\mu\brac{\nbhd{\e}{}\cdot\del\Gset\cdot\nbhd{\e}{}}\leq c\epsilon
\]
for every $0<\epsilon<\frac{\mu\left(\Gset\right)}{2c}$, then $\Gset$
is LWR with 
\[
C=\frac{2c}{\mu\left(\Gset\right)}.
\]
The converse also holds: suppose $\Gset$ is LWR with positive measure
and parameter $C$. Then for $\e<C^{-1}$,
\[
\mu\brac{\nbhd{\e}{}\cdot\del\Gset\cdot\nbhd{\e}{}}\leq C\mu\left(\Gset\right)\epsilon.
\]
\end{lem}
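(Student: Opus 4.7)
The plan is to use Lemma \ref{lem:B_plus B_minus} as the engine for both directions. That lemma gives the clean set-theoretic identity $\Gset^{(+\e)} = \Gset^{(-\e)} \sqcup (\nbhd{\e}{}\,\del\Gset\,\nbhd{\e}{})$, which upon applying $\mu$ yields
\[
\mu\brac{\Gset^{(+\e)}} - \mu\brac{\Gset^{(-\e)}} = \mu\brac{\nbhd{\e}{}\,\del\Gset\,\nbhd{\e}{}}.
\]
Both the LWR inequality and the boundary-measure bound are thus essentially just different ways of controlling this single quantity, and the two directions of the lemma become algebraic manipulations once this identity is in hand.

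For the forward direction, I would start from the above identity and insert the hypothesis $\mu(\nbhd{\e}{}\,\del\Gset\,\nbhd{\e}{}) \leq c\e$ to obtain $\mu(\Gset^{(+\e)}) \leq \mu(\Gset^{(-\e)}) + c\e$. To convert this into the multiplicative LWR form with constant $C = 2c/\mu(\Gset)$, it suffices to show $c\e \leq C\e\,\mu(\Gset^{(-\e)})$, i.e. $\mu(\Gset^{(-\e)}) \geq \mu(\Gset)/2$. This is the step I expect to be the main (though mild) obstacle; it is exactly why the range $\e < \mu(\Gset)/(2c)$ appears in the hypothesis. The verification will go through the other half of Lemma \ref{lem:B_plus B_minus}, namely $\Gset^{(-\e)} = \Gset \setminus (\nbhd{\e}{}\,\del\Gset\,\nbhd{\e}{})$, which gives $\mu(\Gset^{(-\e)}) \geq \mu(\Gset) - c\e > \mu(\Gset)/2$ precisely in the specified range.

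For the converse, I would simply rearrange the LWR inequality. Starting from $\mu(\Gset^{(+\e)}) \leq (1 + C\e)\mu(\Gset^{(-\e)})$ and subtracting $\mu(\Gset^{(-\e)})$ from both sides, the identity above gives
\[
\mu\brac{\nbhd{\e}{}\,\del\Gset\,\nbhd{\e}{}} \leq C\e\,\mu\brac{\Gset^{(-\e)}} \leq C\e\,\mu\brac{\Gset},
\]
using the trivial bound $\mu(\Gset^{(-\e)}) \leq \mu(\Gset)$. The restriction $\e < C^{-1}$ plays no essential role in the chain of inequalities themselves, but it is the natural regime in which the multiplicative factor $1 + C\e$ is meaningful and matches the regime from the forward direction. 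No further technical difficulties are anticipated; the whole argument is a short bookkeeping exercise once Lemma \ref{lem:B_plus B_minus} is invoked.
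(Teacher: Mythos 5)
Your proposal is correct and follows essentially the same route as the paper: both directions are driven by the identity $\Gset^{(+\e)} = \Gset^{(-\e)} \sqcup (\nbhd{\e}{}\,\del\Gset\,\nbhd{\e}{})$ from Lemma \ref{lem:B_plus B_minus}, the forward direction uses $\mu(\Gset^{(-\e)}) \geq \mu(\Gset)/2$ on the stated range exactly as you anticipate, and the converse uses $\mu(\Gset^{(-\e)}) \leq \mu(\Gset)$. The only cosmetic difference is that the paper writes the converse as a single chain of inequalities rather than a rearrangement, but the content is the same.
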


\begin{proof}
By our assumption, for $\epsilon<\frac{\mu\left(\Gset\right)}{2c}$,
and by Lemma \ref{lem:B_plus B_minus}: 

\begin{eqnarray*}
\mu\left(\Gset^{\left(+\e\right)}\right) & = & \mu\left(\nbhd{\e}{}\Gset\nbhd{\e}{}\right)\\
 & = & \mu\left(\Gset^{\left(-\e\right)}\right)+\mu\left(\nbhd{\e}{}\cdot\del\Gset\cdot\nbhd{\e}{}\right)\\
 & \leq & \mu\left(\Gset^{\left(-\e\right)}\right)+c\e
\end{eqnarray*}
and
\begin{eqnarray*}
\mu\brac{\Gset^{\left(-\e\right)}} & = & \mu\left(\Gset\setminus\left(\nbhd{\e}{}\cdot\del\Gset\cdot\nbhd{\e}{}\right)\right)\\
 & \geq & \mu\left(\Gset\right)-\mu\left(\nbhd{\e}{}\cdot\del\Gset\cdot\nbhd{\e}{}\right)\\
 & \geq & \mu\left(\Gset\right)-c\e\\
_{\left(\e<\frac{\mu\left(\Gset\right)}{2c}\right)} & \geq & \frac{\mu\left(\Gset\right)}{2}
\end{eqnarray*}
As a result, for $\e<\frac{\mu\left(\Gset\right)}{2c}$, 
\[
\frac{\mu\brac{\Gset^{\left(+\e\right)}}-\mu\brac{\Gset^{\left(-\e\right)}}}{\mu\brac{\Gset^{\left(-\e\right)}}}\leq\frac{c\e}{\frac{1}{2}\mu\left(\Gset\right)}=\frac{2c}{\mu\left(\Gset\right)}\cdot\e.
\]

Regarding the opposite direction, our assumption is that for $\e<C^{-1}$,
\[
\frac{\mu\brac{\Gset^{\left(+\e\right)}}-\mu\brac{\Gset^{\left(-\e\right)}}}{\mu\brac{\Gset^{\left(-\e\right)}}}\leq C\e.
\]
Hence, 
\[
\frac{\mu\brac{\nbhd{\e}{}\cdot\del\Gset\cdot\nbhd{\e}{}}}{\mu\left(\Gset\right)}\leq\frac{\mu\brac{\Gset^{\left(+\e\right)}}-\mu\brac{\Gset^{\left(-\e\right)}}}{\mu\brac{\Gset^{\left(-\e\right)}}}\leq C\e.
\]
In other words, 
\[
\mu\brac{\nbhd{\e}{}\cdot\del\Gset\cdot\nbhd{\e}{}}\leq\mu\left(\Gset\right)C\e.
\]
\end{proof}
One consequence of Lemma \ref{lem: LWR for single set} is that finite
unions and intersections of LWR sets are in themselves LWR. 
\begin{lem}
\label{lem: intersection and union of LWR sets}Let $G$ be a Lie
group with a Borel measure $\mu$. If two subsets $\Gset$ and $\Gset^{\prime}$
of $G$ such that $0<\mu\left(\Gset\cap\Gset^{\prime}\right)$ are
LWR, then $\Gset\cap\Gset^{\prime}$ and $\Gset\cup\Gset^{\prime}$
are also LWR with Lipschitz constant
\[
C_{\Gset\cap\Gset^{\prime}}=2\max\left\{ C,C^{\prime}\right\} \cdot\frac{\mu\left(\Gset\right)+\mu\left(\Gset^{\prime}\right)}{\mu\left(\Gset\cap\Gset^{\prime}\right)};\,\,C_{\Gset\cup\Gset^{\prime}}=2\max\left\{ C,C^{\prime}\right\} \cdot\frac{\mu\left(\Gset\right)+\mu\left(\Gset^{\prime}\right)}{\mu\left(\Gset\cup\Gset^{\prime}\right)}.
\]
\end{lem}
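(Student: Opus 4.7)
The plan is to reduce everything to the boundary-measure characterization provided by Lemma \ref{lem: LWR for single set}, which makes both directions (LWR implies boundary bound, boundary bound implies LWR) available, and then exploit the elementary topological fact that the boundary of a union or intersection is contained in the union of the two boundaries.

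First I would apply the converse part of Lemma \ref{lem: LWR for single set} to each of $\Gset$ and $\Gset^{\prime}$: since these are LWR with constants $C$ and $C^{\prime}$, we get, for all sufficiently small $\e$, the bounds
\[
\mu\brac{\nbhd{\e}{}\,\del\Gset\,\nbhd{\e}{}}\leq C\mu(\Gset)\e,\qquad \mu\brac{\nbhd{\e}{}\,\del\Gset^{\prime}\,\nbhd{\e}{}}\leq C^{\prime}\mu(\Gset^{\prime})\e.
\]
Then I would use the standard inclusions $\del(\Gset\cap\Gset^{\prime})\subseteq\del\Gset\cup\del\Gset^{\prime}$ and $\del(\Gset\cup\Gset^{\prime})\subseteq\del\Gset\cup\del\Gset^{\prime}$, which distribute through multiplication by $\nbhd{\e}{}$ on both sides, so that the thickened boundary of either $\Gset\cap\Gset^{\prime}$ or $\Gset\cup\Gset^{\prime}$ is contained in $\nbhd{\e}{}\del\Gset\nbhd{\e}{}\cup\nbhd{\e}{}\del\Gset^{\prime}\nbhd{\e}{}$. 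Subadditivity of $\mu$ then yields
\[
\mu\brac{\nbhd{\e}{}\,\del(\Gset\cap\Gset^{\prime})\,\nbhd{\e}{}}\leq \max\{C,C^{\prime}\}\brac{\mu(\Gset)+\mu(\Gset^{\prime})}\e,
\]
and the same bound for $\del(\Gset\cup\Gset^{\prime})$.

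Finally, I would feed these bounds back into the forward direction of Lemma \ref{lem: LWR for single set}, once with the set $\Gset\cap\Gset^{\prime}$ (which has positive finite measure by hypothesis and by $\Gset\cap\Gset^{\prime}\subseteq\Gset$), and once with $\Gset\cup\Gset^{\prime}$. Taking $c=\max\{C,C^{\prime}\}\brac{\mu(\Gset)+\mu(\Gset^{\prime})}$ in that lemma immediately produces the Lipschitz constants
\[
C_{\Gset\cap\Gset^{\prime}}=\frac{2c}{\mu(\Gset\cap\Gset^{\prime})},\qquad C_{\Gset\cup\Gset^{\prime}}=\frac{2c}{\mu(\Gset\cup\Gset^{\prime})},
\]
which are exactly the expressions in the statement.

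I do not anticipate any serious obstacle: the argument is essentially bookkeeping once the boundary criterion is in hand. The only point that requires a brief sanity check is the range of $\e$ for which all four applications of Lemma \ref{lem: LWR for single set} are simultaneously valid; one simply takes the minimum of the four thresholds $C^{-1},(C^{\prime})^{-1},\mu(\Gset\cap\Gset^{\prime})/(2c),\mu(\Gset\cup\Gset^{\prime})/(2c)$, which is positive under the stated hypotheses.
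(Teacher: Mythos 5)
Your proposal is correct and follows essentially the same approach as the paper: apply the converse direction of Lemma \ref{lem: LWR for single set} to each of $\Gset$ and $\Gset^{\prime}$, use $\del(\Gset\cap\Gset^{\prime})\cup\del(\Gset\cup\Gset^{\prime})\subseteq\del\Gset\cup\del\Gset^{\prime}$ together with subadditivity, and then feed the resulting boundary bound back into the forward direction of the same lemma.
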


\begin{proof}
We prove the lemma only for the intersection $\Gset\cap\Gset^{\prime}$;
the proof for the union $\Gset\cup\Gset^{\prime}$ is similar. By
Lemma \ref{lem: LWR for single set}, for $\e<\frac{1}{C_{\Gset\cap\Gset^{\prime}}}$
(so $\e<C^{-1},C^{\prime-1}$):
\begin{eqnarray*}
\mu\left(\nbhd{\e}{}\cdot\del\Gset\cdot\nbhd{\e}{}\right) & \leq & C\:\mu\left(\Gset\right)\epsilon\,,\\
\mu\left(\nbhd{\e}{}\cdot\del\Gset^{\prime}\cdot\nbhd{\e}{}\right) & \leq & C^{\prime}\mu\left(\Gset^{\prime}\right)\epsilon.
\end{eqnarray*}

Hence, by using the fact that the boundary of an intersection is contained
in the union of the boundaries, we obtain that for $\e<\frac{1}{C_{\Gset\cap\Gset^{\prime}}}$
\begin{align*}
\mu\left(\nbhd{\e}{}\cdot\del\left(\Gset\cap\Gset^{\prime}\right)\cdot\nbhd{\e}{}\right) & \leq\mu\left(\nbhd{\e}{}\cdot\del\Gset\cdot\nbhd{\e}{}\right)+\mu\left(\nbhd{\e}{}\cdot\del\Gset^{\prime}\cdot\nbhd{\e}{}\right)\\
 & \leq\max\left\{ C,C^{\prime}\right\} \cdot\left(\mu\left(\Gset\right)+\mu\left(\Gset^{\prime}\right)\right)\cdot\e
\end{align*}
The first direction of Lemma \ref{lem: LWR for single set} yields
the desired conclusion. 
\end{proof}
Using Lemma \ref{lem: LWR for single set}, which provides us with
an if and only if criterion for Lipschitz well roundedness of a fixed
set, we will now obtain that the sets with controlled boundary are
indeed LWR. 
\begin{prop}
\label{prop: BCS is well rounded}Let $G$ be a Lie group. Assume
that $\mu$ is a measure on $G$ that is absolutely continuous w.r.t.\
Haar measure, and has density that is bounded on compact sets. If
$\Gset$ is a compact BCS with $\mu\left(\Gset\right)>0$, then $\Gset$
is Lipchitz well-rounded. 
\end{prop}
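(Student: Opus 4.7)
The plan is to reduce the problem to a boundary-measure estimate via Lemma \ref{lem: LWR for single set}: it suffices to exhibit a constant $c>0$ such that $\mu\bigl(\nbhd{\e}{}\,\del\Gset\,\nbhd{\e}{}\bigr)\le c\e$ for all sufficiently small $\e$. The hypothesis ``$\Gset$ is a compact BCS'' combined with Remark \ref{rem:GlobalBCS} immediately gives the global form of the boundary condition: $\del\Gset$ is contained in a finite union $S_1\cup\dots\cup S_n$ of embedded submanifolds of $G$, each of dimension strictly less than $m:=\dim G$. Since $\Gset$ (and hence $\del\Gset$) is compact, I may replace each $S_i$ by a relatively compact piece containing $\del\Gset\cap S_i$.

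Next I would pass to local charts. By compactness of $\del\Gset$ I can cover it with finitely many of the open sets $V_{g_j}$ produced by Proposition \ref{prop: multplication in balls is addition in coordinates}, where $(U_j,\phi_j)$ are ambient charts of $G$ around points $g_j\in\del\Gset$ chosen so that, after possibly shrinking, each $\phi_j(U_j\cap S_i)$ is a bounded subset of a $k$-dimensional linear (or smooth) submanifold of $\RR^m$ with $k\le m-1$. For every $h\in\overline{V_{g_j}}$ Proposition \ref{prop: multplication in balls is addition in coordinates} yields
\[
\phi_j\bigl(\nbhd{\e}{}h\nbhd{\e}{}\bigr)\subseteq \phi_j(h)+\ball{c(g_j)\e},
\]
so $\phi_j\bigl(\nbhd{\e}{}(V_{g_j}\cap\del\Gset)\nbhd{\e}{}\bigr)$ is contained in the Euclidean $c(g_j)\e$-tube around a finite union of bounded pieces of submanifolds of $\RR^m$ of codimension $\ge1$.

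A standard tubular volume estimate shows that the Lebesgue measure of such an $\e$-tube around a bounded $k$-submanifold is $O(\e^{m-k})\le O(\e)$. Since $\mu$ is absolutely continuous with respect to Haar measure with density bounded on compacts, and since Haar measure pushed through any chart is comparable on compacts to Lebesgue measure on $\RR^m$, this estimate transfers to a bound $\mu\bigl(\nbhd{\e}{}(V_{g_j}\cap\del\Gset)\nbhd{\e}{}\bigr)\le c_j\e$. Summing over the finite cover produces the desired $c$, and Lemma \ref{lem: LWR for single set} then certifies that $\Gset$ is LWR.

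The main technical obstacle, which is really the only nonroutine step, is the tubular-volume estimate $\mathrm{Leb}\bigl((S_i)_\e\bigr)\lesssim \e^{m-k}$ for a compact piece of a $k$-dimensional embedded submanifold; compactness of $\del\Gset$ is what allows this estimate to be applied with uniform constants and what makes the constants $c(g_j)$ from Proposition \ref{prop: multplication in balls is addition in coordinates} uniformly bounded over the cover. Everything else is assembly: the finite-cover reduction, the chart straightening of each $S_i$, and the comparison of $\mu$ with Lebesgue via the bounded density.
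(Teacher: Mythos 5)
Your proposal is correct and follows essentially the same route as the paper's own proof: both reduce via Lemma \ref{lem: LWR for single set} to the boundary-measure estimate $\mu(\nbhd{\e}{}\,\del\Gset\,\nbhd{\e}{})\lesssim\e$, cover $\del\Gset$ by the finitely many $V_{g_j}$ from Proposition \ref{prop: multplication in balls is addition in coordinates}, straighten each piece of the boundary in a chart, bound the Lebesgue measure of the $\e$-thickening of a bounded piece of a codimension-$\geq 1$ submanifold by $O(\e)$, and transfer back to $\mu$ using the bounded density. The only cosmetic difference is that the paper insists on charts that flatten the relevant submanifold to $\RR^{m-1}\times\{0\}$, making the tubular estimate a one-line containment $\phi(Y_0)+\ball{c\e}\subseteq E\times[-c_2\e,c_2\e]$, whereas you phrase it as a general tubular-volume estimate; after the chart-straightening you describe, these are the same thing.
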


\begin{proof}
The strategy is to apply Lemma \ref{lem: LWR for single set}. This
will be done by showing that for a subset $Y$ of $G$ which is compact
and consists of a finite union of subsets of embedded submanifolds
of strictly smaller dimension (e.g. the boundary of $\Gset$) there
exist $c=c\left(Y\right),\e\left(Y\right)>0$ such that
\begin{equation}
\mu\left(\nbhd{\e}{}Y\nbhd{\e}{}\right)\leq c\epsilon\label{eq:meager}
\end{equation}
for some $0<\e<\e\left(Y\right)$.

It is clearly sufficient to assume that $Y$ is contained in one submanifold.
For each point $g\in Y$, there is some chart $\phi_{g}:U_{g}\to\RR^{m}$
for which $g\in U_{g}$ and $\phi\left(U_{g}\cap Y\right)\subseteq\RR^{m-1}\times\left\{ 0\right\} $.
Let $V_{g}$ be the open sets from Proposition \ref{prop: multplication in balls is addition in coordinates}
which satisfy: $g\in V_{g}\subseteq U_{g}$. By compactness, there
are $g_{1},\dots,g_{r}\in Y$ for which $V_{g_{1}},\dots,V_{g_{r}}$
cover $Y$ entirely. In order to establish the inequality in Formula
(\ref{eq:meager}), it is sufficient to prove it for each $Y\cap\overline{V}_{g_{i}}$
separately. Consequentially, we may assume that $r=1$: $g_{1}=g$,
$V_{g_{1}}=V$, $Y_{0}=Y\cap\overline{V}$ and $\phi_{g_{1}}=\phi$.\textcolor{black}{{} }

By Proposition \ref{prop: multplication in balls is addition in coordinates},
there exist $c\left(g\right),\e\left(g\right)>0$ such that for $\e<\e\left(g\right)$
and $h\in\overline{V}$, $\phi\left(\nbhd{\e}{}h\nbhd{\e}{}\right)\subseteq\phi\left(h\right)+\ball{c\left(g\right)\e}$.
In particular 
\[
\phi\left(\nbhd{\e}{}Y_{0}\nbhd{\e}{}\right)\subseteq\phi\left(Y_{0}\right)+\ball{c\left(g\right)\e}.
\]
Hence it is sufficient to show that $\phi_{*}\mu\left(\phi\left(Y_{0}\right)+\ball{c\left(g\right)\e}\right)\leq c\e$. 

Let $\om\in L^{1}\left(\RR^{m}\right)$ be such that $\phi_{*}\mu=\om\cdot\mu_{\RR^{m}}$
where $\mu_{\RR^{m}}$ is the Lebesgue measure on $\RR^{m}$. Then,
since $\om$ is bounded on compact sets  (and in particular on $\phi\left(\overline{\nbhd{\e\left(g\right)}{}Y_{0}\nbhd{\e\left(g\right)}{}}\right)$),
it is sufficient to show that
\[
\mu_{\RR^{m}}\left(\phi\left(Y_{0}\right)+\ball{\e}\right)\leq c\e.
\]
Indeed, since $Y_{0}$ is an embedded submanifold, there exists a
bounded set $E\subseteq\RR^{m-1}$ such that $\phi\left(Y_{0}\right)+\ball{\e}\subseteq E\times\left[-c_{2}\e,c_{2}\e\right]$,
which implies the desired result.
\end{proof}

\section{Roundomorphisms\label{sec: Roundomorphisms}}

Roughly speaking, the difficulty in checking well roundedness inside
a simple non compact Lie group arises from the fact that well roundedness
is a \emph{multiplicative} property, while simple Lie groups are ``highly
non-abelian''. Nevertheless, simple Lie groups have several known
decompositions --- Cartan, Iwasawa, etc. --- which allow them to
be written as the product of more ``convenient'' subgroups. E.g.,
in the case of the Iwasawa decomposition, the subgroups $K,A,N$ are
compact, abelian and nilpotent respectively, which makes it considerably
easier to prove well roundedness inside them. The goal of this section
is to reduce the question of whether a family $\Gset_{T}\subset G$
is LWR, to verifying LWR of the projections of $\Gset_{T}$ to each
of the components of $G$ w.r.t.\ a given decomposition. E.g. when
considering the Iwasawa decomposition, the well roundedness of $\Gset_{T}$
is reduced to the question of well roundedness of the image of $\Gset_{T}$
in the direct product $K\times A\times N$. This can be achieved \emph{if}
the Iwasawa diffeomorphism $G\to K\times A\times N$ preserves  well
roundedness; maps with this property are the topic of the following
definition. 
\begin{defn}[Roundomorphism]
\label{def: roundomorphism}Let $G$ and $\sbgrp$ be two topological
groups with measures $\mu_{G}$ and $\mu_{\sbgrp}$, and let $\left(\nbhd{\e}G\right)_{\e>0}$
and $\left(\nbhd{\e}{\sbgrp}\right)_{\e>0}$ be two families of identity
neighborhoods in $G$ and $\sbgrp$ respectively. A Borel measurable
map $\roundo:G\to\sbgrp$ will be called an \emph{$f$-roundomorphism}
if it is:
\begin{enumerate}
\item \textbf{Measure preserving:} $\roundo_{*}\left(\mu_{G}\right)=\mu_{\sbgrp}$.
\item \textbf{Locally Lipschitz:} $\roundo\left(\nbhd{\e}Gg\nbhd{\e}G\right)\subseteq\nbhd{f\e}{\sbgrp}\roundo(g)\nbhd{f\e}{\sbgrp}$
for some continuous $f=f\left(g\right):G\to\RR_{>0}$ and for every
$0<\e<\frac{1}{f}$.
\end{enumerate}
\end{defn}

The following proposition reveals the motivation for defining roundomorphisms,
as well as the reason they are called that way: they pull back LWR
families to LWR families. 
\begin{prop}
\label{prop: roundo pulls back LWR to LWR}Let $\roundo:G\to\sbgrp$
be an $f$-roundomorphism. Assume that $\left\{ \Gset_{T}\right\} _{T>0}$
is a family of measurable subsets of $\sbgrp$ such that $f$ is bounded
uniformly on $\roundo^{-1}\left(\Gset_{T}\right)$ by a constant $F$.
If $\left\{ \Gset_{T}\right\} $ is LWR with parameters $\left(T_{0},C_{0}\right)$,
then the pre-image $\roundo^{-1}\left(\Gset_{T}\right)$ is LWR with
parameters $\left(T_{0},F\cdot\max\left\{ C_{0},1\right\} \right)$.
\end{prop}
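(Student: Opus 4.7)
The plan is to derive the LWR property for $\{\roundo^{-1}(\Gset_T)\}$ directly from the LWR hypothesis on $\{\Gset_T\}$, exploiting the two defining properties of an $f$-roundomorphism in complementary ways: the locally Lipschitz condition converts an $\e$-fattening in $G$ into an $F\e$-fattening in $\sbgrp$, while measure preservation identifies the $\mu_G$-mass of preimages with the $\mu_\sbgrp$-mass of their images. The overall flow is to fatten/shrink in $G$, push through $\roundo$, invoke the LWR bound for $\{\Gset_T\}$ at the enlarged scale $F\e$, and read the conclusion back in $G$.

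The key step is to establish the two set-theoretic inclusions
\[
\roundo^{-1}(\Gset_T)^{(+\e)} \subseteq \roundo^{-1}\!\left(\Gset_T^{(+F\e)}\right), \qquad \roundo^{-1}\!\left(\Gset_T^{(-F\e)}\right) \subseteq \roundo^{-1}(\Gset_T)^{(-\e)}.
\]
For the first, I would write a point $g$ of the left-hand side as $g \in \nbhd{\e}{G} h \nbhd{\e}{G}$ with $h \in \roundo^{-1}(\Gset_T)$, apply the locally Lipschitz condition at $h$, and use $f(h) \le F$ (valid by hypothesis on $\roundo^{-1}(\Gset_T)$) to obtain $\roundo(g) \in \nbhd{F\e}{\sbgrp} \roundo(h) \nbhd{F\e}{\sbgrp} \subseteq \Gset_T^{(+F\e)}$. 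For the second, I would first note that $\Gset_T^{(-F\e)} \subseteq \Gset_T$ (since the identity lies in each $\nbhd{F\e}{\sbgrp}$), so any $g \in \roundo^{-1}(\Gset_T^{(-F\e)})$ satisfies $f(g) \le F$; the locally Lipschitz condition then yields $\roundo(\nbhd{\e}{G} g \nbhd{\e}{G}) \subseteq \nbhd{F\e}{\sbgrp} \roundo(g) \nbhd{F\e}{\sbgrp} \subseteq \Gset_T$, which is equivalent to $g \in \roundo^{-1}(\Gset_T)^{(-\e)}$.

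Measure preservation then converts these containments into
\[
\mu_G\!\left(\roundo^{-1}(\Gset_T)^{(+\e)}\right) \le \mu_\sbgrp\!\left(\Gset_T^{(+F\e)}\right), \qquad \mu_\sbgrp\!\left(\Gset_T^{(-F\e)}\right) \le \mu_G\!\left(\roundo^{-1}(\Gset_T)^{(-\e)}\right).
\]
Chaining these with the LWR bound $\mu_\sbgrp(\Gset_T^{(+F\e)}) \le (1 + C_0 F \e)\, \mu_\sbgrp(\Gset_T^{(-F\e)})$ for $T > T_0$ and $\e$ small enough that $F\e$ is within the LWR range of $\{\Gset_T\}$ gives an LWR estimate for $\{\roundo^{-1}(\Gset_T)\}$ with constant $C_0 F$, which is bounded above by $F \max\{C_0, 1\}$ as claimed.

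There is no genuinely hard step: the main thing to watch is the asymmetry between the $(+\e)$ and $(-\e)$ directions, since in the latter case the required bound $f(g) \le F$ must be extracted from the inclusion $\Gset_T^{(-F\e)} \subseteq \Gset_T$ before the hypothesis on $f$ can be applied. Everything else is bookkeeping around the definitions of $(+\e)$, $(-\e)$, and the three properties of $\roundo$.
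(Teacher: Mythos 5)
Your proposal is correct and follows essentially the same route as the paper: establish the two inclusions $\roundo^{-1}(\Gset_T)^{(+\e)} \subseteq \roundo^{-1}(\Gset_T^{(+F\e)})$ and $\roundo^{-1}(\Gset_T^{(-F\e)}) \subseteq \roundo^{-1}(\Gset_T)^{(-\e)}$ using local Lipschitzity, then convert to measure inequalities via $\roundo_*\mu_G = \mu_\sbgrp$, and finally chain with the LWR bound for $\{\Gset_T\}$ at scale $F\e$. Your explicit observation that $\Gset_T^{(-F\e)} \subseteq \Gset_T$ is needed to legitimize the bound $f(g) \leq F$ in the second inclusion is a point the paper glosses over, and is a useful clarification rather than a divergence.
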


\begin{proof}
The strategy of the proof is to show that for $\e<F^{-1}$,
\begin{equation}
\mu_{G}\left(\left(\roundo^{-1}\left(\Gset_{T}\right)\right)^{\left(+\e\right)}\right)\leq\mu_{\sbgrp}\left(\Gset_{T}^{\left(+F\e\right)}\right)\label{eq: roundo1}
\end{equation}
and
\begin{equation}
\mu_{\sbgrp}\left(\Gset_{T}^{\left(-F\e\right)}\right)\leq\mu_{G}\left(\left(\roundo^{-1}\left(\Gset_{T}\right)\right)^{\left(-\e\right)}\right).\label{eq: roundo2}
\end{equation}
It will then follow that for $T>T_{0}$ and $\e<\frac{1}{F\cdot\max\left\{ C_{0},1\right\} }$
(so that both $\e<F^{-1}$ and $\e<\left(FC_{0}\right)^{-1}$: the
first for inequalities (\ref{eq: roundo1}) and (\ref{eq: roundo2})
to hold, and the second for the LWR of $\left\{ \Gset_{T}\right\} $),
\[
\frac{\mu_{G}\left(\left(\roundo^{-1}\left(\Gset_{T}\right)\right)^{\left(+\e\right)}\right)}{\mu_{G}\left(\left(\roundo^{-1}\left(\Gset_{T}\right)\right)^{\left(-\e\right)}\right)}\leq\frac{\mu_{\sbgrp}\left(\Gset_{T}^{\left(+F\e\right)}\right)}{\mu_{\sbgrp}\left(\Gset_{T}^{\left(-F\e\right)}\right)}\leq1+FC_{0}\e.
\]
Inequalities (\ref{eq: roundo1}) and (\ref{eq: roundo2}) follow
from measure preservation of $r$, along with the following inclusions:
\begin{align*}
\left(\roundo^{-1}\left(\Gset_{T}\right)\right)^{\left(+\e\right)}\subseteq & r^{-1}\left(\Gset_{T}^{\left(+F\e\right)}\right),\\
\left(\roundo^{-1}\left(\Gset_{T}\right)\right)^{\left(-\e\right)}\supseteq & r^{-1}\left(\Gset_{T}^{\left(-F\e\right)}\right),
\end{align*}
that we now justify. For the first, note that by definition of a roundomorphism,
$\nbhd{\e}Gg\nbhd{\e}G\subseteq r^{-1}\left(\nbhd{f\e}{\sbgrp}\roundo(g)\nbhd{f\e}{\sbgrp}\right)$.
Hence, $\nbhd{\e}G\cdot\roundo^{-1}\left(\Gset_{T}\right)\cdot\nbhd{\e}G\subseteq\roundo^{-1}\left(\nbhd{F\e}{\sbgrp}\Gset_{T}\nbhd{F\e}{\sbgrp}\right)$.
For the second inclusion, suppose $g\in r^{-1}\left(\Gset_{T}^{\left(-F\e\right)}\right)$.
We want to show that if $u,v\in\nbhd{\e}G$, then $ugv\in r^{-1}\left(\Gset_{T}\right)$.
Put differently, $r\left(ugv\right)\in\Gset_{T}$. This is indeed
the case, since $r\left(ugv\right)=u^{\prime}r\left(g\right)v^{\prime}$
for some $u^{\prime},v^{\prime}\in\nbhd{F\e}{\sbgrp}$ (local Lipschitzity
of $r$), and $u^{\prime}r\left(g\right)v^{\prime}\in\Gset_{T}$ since
$r\left(g\right)\in\Gset_{T}^{\left(-F\e\right)}$. 
\end{proof}
The most useful incident of Proposition \ref{prop: roundo pulls back LWR to LWR}
is when $\sbgrp$ (such that $r:G\to\sbgrp$ is a roundomorphism)
is a direct product of\textcolor{black}{{} groups. This is what allows
us to reduce (under certain conditions) well roundedness in} the group
$G$ to well roundedness in the components of a decomposition of $G$. 
\begin{cor}
\label{cor: roundo to a produt grp}Let $\roundo:G\to\sbgrp=\sbgrp_{1}\times\cdots\times\sbgrp_{\topindex}$
be an $f$-roundomorphism and let $\Gset_{T}=\Gset_{T}^{1}\times\cdots\times\Gset_{T}^{\topindex}\subseteq\sbgrp$.
Set 
\begin{enumerate}
\item $\mu_{\sbgrp}=\mu_{\sbgrp_{1}}\times\cdots\times\mu_{\sbgrp_{\topindex}}$
\item $\nbhd{\e}{\sbgrp}=\nbhd{\e}{\sbgrp_{1}}\times\cdots\times\nbhd{\e}{\sbgrp_{\topindex}}$
\end{enumerate}
and assume that: 
\begin{enumerate}
\item For $j=1,\dots,\topindex$: $\Gset_{T}^{j}\subseteq\sbgrp_{j}$ is
LWR w.r.t.\ the parameters $\left(T_{j},C_{j}\right)$;
\item $f$ is bounded uniformly by $F$ on the sets $\roundo^{-1}\left(\Gset_{T}\right)$.
\end{enumerate}
Then $\roundo^{-1}\left(\Gset_{T}\right)$ is LWR, w.r.t.\ the parameters
\[
T=\max\left\{ T_{1},\dots,T_{\topindex}\right\} ,\;C\asymp_{\topindex}F\cdot\max\left\{ C_{1},\dots,C_{\topindex},1\right\} .
\]
\end{cor}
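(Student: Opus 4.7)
The plan is to combine Proposition \ref{prop: roundo pulls back LWR to LWR} with a direct verification that a product of LWR sets inside a product group is itself LWR. Once the latter is established, the full statement follows by pulling back through $\roundo$, which will automatically contribute the factor $F$ to the Lipschitz constant.

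First I would observe that, with the product choices $\mu_{\sbgrp}=\mu_{\sbgrp_{1}}\times\cdots\times\mu_{\sbgrp_{\topindex}}$ and $\nbhd{\e}{\sbgrp}=\nbhd{\e}{\sbgrp_{1}}\times\cdots\times\nbhd{\e}{\sbgrp_{\topindex}}$, the thickening and shrinking operations factorise componentwise:
\[
\Gset_{T}^{\left(\pm\e\right)}=\brac{\Gset_{T}^{1}}^{\left(\pm\e\right)}\times\cdots\times\brac{\Gset_{T}^{\topindex}}^{\left(\pm\e\right)}.
\]
This is immediate from the definition, since a neighborhood of $\brac{g_{1},\dots,g_{\topindex}}$ in $\nbhd{\e}{\sbgrp}$ is a tuple of neighborhoods in the $\nbhd{\e}{\sbgrp_{j}}$. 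Consequently
\[
\frac{\mu_{\sbgrp}\brac{\Gset_{T}^{\left(+\e\right)}}}{\mu_{\sbgrp}\brac{\Gset_{T}^{\left(-\e\right)}}}=\prod_{j=1}^{\topindex}\frac{\mu_{\sbgrp_{j}}\brac{\brac{\Gset_{T}^{j}}^{\left(+\e\right)}}}{\mu_{\sbgrp_{j}}\brac{\brac{\Gset_{T}^{j}}^{\left(-\e\right)}}}.
\]

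For $T>\max_{j}T_{j}$ and $\e<\min_{j}C_{j}^{-1}$, the hypothesis gives each factor a bound of $1+C_{j}\e$. The elementary inequality $\prod_{j=1}^{\topindex}\brac{1+C_{j}\e}\leq1+c_{\topindex}\max_{j}\cbrac{C_{j}}\e$, valid for $\e$ small enough (say $\e\leq 1/\max_j C_j$), for a constant $c_{\topindex}$ depending only on $\topindex$, yields that $\Gset_{T}$ is LWR in $\sbgrp$ with parameters $\brac{\max_{j}T_{j},\,c_{\topindex}\max_{j}\cbrac{C_{j}}}$. I would then invoke Proposition \ref{prop: roundo pulls back LWR to LWR} with the uniform bound $f\leq F$ on $\roundo^{-1}\brac{\Gset_{T}}$: it gives that $\roundo^{-1}\brac{\Gset_{T}}$ is LWR with parameters $\brac{\max_{j}T_{j},\,F\cdot\max\cbrac{c_{\topindex}\max_{j}C_{j},1}}$, which is exactly the asserted asymptotic $C\asymp_{\topindex}F\cdot\max\cbrac{C_{1},\dots,C_{\topindex},1}$.

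There is no real obstacle here; the only thing to check carefully is the range of $\e$ for which the product inequality and the hypothesis of Proposition \ref{prop: roundo pulls back LWR to LWR} simultaneously apply. This amounts to shrinking the threshold to $\e<\brac{F\cdot c_{\topindex}\max_{j}C_{j}}^{-1}$ (and $\e<1$), which is absorbed into the stated Lipschitz constant.
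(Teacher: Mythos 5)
Your proof is correct and takes essentially the same route as the paper: factor $\Gset_{T}^{(\pm\e)}$ componentwise, bound the resulting product $\prod_{j}(1+C_{j}\e)$ by $1+c_{\topindex}\max_{j}C_{j}\,\e$ for small $\e$, and then pull back through $\roundo$ via Proposition \ref{prop: roundo pulls back LWR to LWR}. The only (cosmetic) difference is that the paper reduces to $\topindex=2$ and invokes induction, whereas you treat all $\topindex$ factors at once; both are elementary and land on the same Lipschitz constant up to the $\asymp_{\topindex}$ allowance.
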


\begin{proof}
It is sufficient to prove the claim for $\topindex=2$, where one
then proceeds by induction. According to the previous proposition
we only need to show that $\Gset_{T}$ is Lipchitz well-rounded w.r.t.\
the parameters $\left(T,C/F\right)$. Indeed, since
\[
\mu_{\sbgrp}\left(\Gset_{T}^{\left(\pm\e\right)}\right)=\mu_{\sbgrp_{1}}\left(\left(\Gset_{T}^{1}\right)^{\left(\pm\e\right)}\right)\cdot\mu_{\sbgrp_{2}}\left(\left(\Gset_{T}^{2}\right)^{\left(\pm\e\right)}\right),
\]
we obtain
\[
\frac{\mu_{\sbgrp}\left(\Gset_{T}^{\left(+\e\right)}\right)}{\mu_{\sbgrp}\left(\Gset_{T}^{\left(-\e\right)}\right)}\leq\left(1+C_{1}\e\right)\left(1+C_{2}\e\right)\leq\left(1+\max\left\{ C_{1},C_{2}\right\} \e\right)^{2}\leq1+3\max\left\{ C_{1},C_{2}\right\} \e
\]
 for $\e\porsmall\frac{1}{\max\left\{ C_{1},C_{2}\right\} }$.
\end{proof}
\begin{rem}
\label{rem: product of LWR is LWR}One consequence of Corollary \ref{cor: roundo to a produt grp}
is that a direct product of LWR families 
\[
\Gset_{T}^{1}\times\cdots\times\Gset_{T}^{\topindex}\subseteq\sbgrp_{1}\times\cdots\times\sbgrp_{\topindex}
\]
is LWR. To see this, take $G=\sbgrp_{1}\times\cdots\times\sbgrp_{\topindex}$
and $r$ that is the identity map on $G$; it is a roundomorphism
with $f\equiv1$.
\end{rem}

The content of the following lemma is that a composition of roundomorphisms
is a roundomorphism.
\begin{lem}
\label{lem: composition of roundomorphisms}Suppose that $r_{1}:G_{1}\to G_{2}$
is an $f_{1}$-roundomorpism and $r_{2}:G_{2}\to G_{3}$ is an $f_{2}$-roundomorphism.
Then, $r_{2}\circ r_{1}$ is an $f=\left(f_{2}\circ r_{1}\right)\cdot f_{1}$-roundomorphism.
\end{lem}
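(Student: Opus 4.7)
The plan is to verify the two defining properties of a roundomorphism for the composition $r_{2}\circ r_{1}$, reading off the constant $f=(f_{2}\circ r_{1})\cdot f_{1}$ directly from the chain of inclusions.

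The first property (measure preservation) is immediate from the functoriality of the pushforward:
\[
(r_{2}\circ r_{1})_{*}\mu_{G_{1}}=(r_{2})_{*}(r_{1})_{*}\mu_{G_{1}}=(r_{2})_{*}\mu_{G_{2}}=\mu_{G_{3}}.
\]
For the second property (local Lipschitzness), I would fix $g\in G_{1}$ and take $\e$ small enough that both inequalities $\e<1/f_{1}(g)$ and $f_{1}(g)\e<1/f_{2}(r_{1}(g))$ hold; equivalently $\e<1/f(g)$ for $f(g):=f_{1}(g)\cdot(f_{2}\circ r_{1})(g)$. Then applying the $f_{1}$-Lipschitz bound once and the $f_{2}$-Lipschitz bound once,
\[
r_{2}r_{1}\bigl(\nbhd{\e}{G_{1}}g\nbhd{\e}{G_{1}}\bigr)\subseteq r_{2}\bigl(\nbhd{f_{1}(g)\e}{G_{2}}r_{1}(g)\nbhd{f_{1}(g)\e}{G_{2}}\bigr)\subseteq\nbhd{f(g)\e}{G_{3}}(r_{2}\circ r_{1})(g)\nbhd{f(g)\e}{G_{3}},
\]
which is exactly the required inclusion for $r_{2}\circ r_{1}$ with the advertised $f$.

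It remains to note that $f$ is continuous and positive: positivity is clear since both $f_{1}$ and $f_{2}$ take values in $\RR_{>0}$, and continuity follows because $f_{1}$ is continuous by assumption and $f_{2}\circ r_{1}$ is continuous whenever $r_{1}$ is (and in the intended applications the roundomorphisms arising from group decompositions are indeed continuous; in any event one may always replace $f_{2}\circ r_{1}$ by a continuous majorant on the relevant compact sets without affecting the inclusion). There is really no main obstacle --- the composition is designed so that the two Lipschitz constants multiply, and the only bookkeeping is to make sure that the range of admissible $\e$ for $r_{1}$ is compatible with the range of admissible $f_{1}(g)\e$ for $r_{2}$, which is precisely the condition $\e<1/f(g)$ that we build into $f$.
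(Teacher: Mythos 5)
Your proof follows the paper's argument exactly: dismiss measure preservation as immediate from functoriality of pushforward, then verify local Lipschitzity via the two-step chain of inclusions applying the Lipschitz bounds of $r_{1}$ and $r_{2}$ in turn. The extra bookkeeping you supply is not in the paper and actually surfaces two small imprecisions that the paper glosses over: the single condition $\e<1/f(g)$ does not by itself guarantee $\e<1/f_{1}(g)$ unless $f_{2}\geq1$ (a harmless normalization one may impose by replacing $f_{2}$ with $\max\{f_{2},1\}$), and since a roundomorphism is a priori only Borel measurable, $f_{2}\circ r_{1}$ need not be continuous; your suggested fix of passing to a continuous majorant is a reasonable repair.
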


\begin{proof}
Clearly we only need to check that $r_{2}\circ r_{1}$ is locally
Lipchitz:
\[
r_{2}r_{1}\left(\nbhd{\e}{G_{1}}\cdot g\cdot\nbhd{\e}{G_{1}}\right)\subseteq r_{2}\left(\nbhd{f_{1}\e}{G_{2}}\cdot r_{1}\left(g\right)\cdot\nbhd{f_{1}\e}{G_{2}}\right)\subseteq\nbhd{f\e}{G_{3}}\cdot r_{2}r_{1}\left(g\right)\cdot\nbhd{f\e}{G_{3}}.
\]
\end{proof}
Finally, any smooth map from $G_{1}$ to $G_{2}$ such that $\roundo_{*}\left(\mu_{G_{1}}\right)=\mu_{\sbgrp_{2}}$
is a roundomorphism:
\begin{prop}
\label{prop: smooth map is Lipschitz}Let $G_{1}$ and $G_{2}$ be
Lie groups and $r:G_{1}\to G_{2}$ a sooth map. Then $r$ is locally
Lipschitz.
\end{prop}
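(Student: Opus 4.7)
The plan is to combine Proposition \ref{prop: multplication in balls is addition in coordinates}, which encodes a product $\nbhd{\e}{G_{1}}g\nbhd{\e}{G_{1}}$ as an additive displacement in coordinates, with the standard fact that a smooth map is Lipschitz on any convex compact subset of its domain (via the mean value inequality applied to its representative in charts). Since all coordinate balls on a given Lie group are equivalent (Remark \ref{rem: coordinate balls equivalent}), I am free to pick whichever chart is most convenient around each point.

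Fix $g\in G_{1}$ and let $\phi_{1}$ be an exponential chart at $g$; on the $G_{2}$ side, I would take the left-translated exponential chart $\phi_{2}(x):=\log\brac{r(g)^{-1}x}$, so that $\phi_{2}(r(g))=0$ and $\phi_{2}^{-1}(\ball{\dl})=r(g)\nbhd{\dl}{G_{2}}$. Proposition \ref{prop: multplication in balls is addition in coordinates} furnishes a neighborhood $V\ni g$ with convex compact image $\phi_{1}(\overline{V})$ and constants $c(g),\e(g)>0$ such that
\[
\phi_{1}\brac{\nbhd{\e}{G_{1}}g\nbhd{\e}{G_{1}}}\subseteq\phi_{1}(g)+\ball{c(g)\e}\quad\text{for all }\e<\e(g).
\]
After possibly shrinking $V$ so that $r(\overline{V})$ lies in the domain of $\phi_{2}$, the composition $\Phi:=\phi_{2}\circ r\circ\phi_{1}^{-1}$ is smooth on $\phi_{1}(\overline{V})$ and hence Lipschitz there with constant $L(g):=\sup_{\phi_{1}(\overline{V})}\norm{D\Phi}$. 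Chaining the two estimates (using $\Phi(\phi_{1}(g))=0$) yields
\[
r\brac{\nbhd{\e}{G_{1}}g\nbhd{\e}{G_{1}}}\subseteq\phi_{2}^{-1}(\ball{L(g)c(g)\e})=r(g)\nbhd{L(g)c(g)\e}{G_{2}}\subseteq\nbhd{L(g)c(g)\e}{G_{2}}\,r(g)\,\nbhd{L(g)c(g)\e}{G_{2}},
\]
where the final inclusion is immediate from $1\in\nbhd{\dl}{G_{2}}$.

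It remains to assemble the pointwise constants $C_{g}:=\max\{L(g)c(g),\,1/\e(g)\}$ into a single continuous function $f:G_{1}\to\RR_{>0}$ with $f(g)\geq C_{g}$; once this is done, the required inclusion for every $0<\e<1/f(g)$ follows from the inclusion displayed above (enlarging the Lipschitz constant and shrinking the $\e$-range only strengthens the conclusion). Observe that the estimates in the previous paragraph hold uniformly on the whole neighborhood $V(g)$ with the same constants, so we obtain an open cover $\{V(g)\}_{g\in G_{1}}$ on each member of which a constant value of $f$ suffices. A standard partition-of-unity argument subordinate to a locally finite refinement then produces a continuous majorant, which we take as $f$. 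This final bookkeeping step, rather than the Lipschitz estimate itself, is the main obstacle I foresee: one must harmonize the local choices of charts and constants into a globally continuous function, which requires some care but no new ideas beyond standard paracompactness.
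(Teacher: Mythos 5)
Your argument follows the same route as the paper: invoke Proposition \ref{prop: multplication in balls is addition in coordinates} to pass to coordinates around $g$, use a left--translated exponential chart $\phi_{2}$ on the target centered at $r(g)$, observe that the coordinate representative $\Phi=\phi_{2}\circ r\circ\phi_{1}^{-1}$ is Lipschitz on the compact $\phi_{1}(\overline V)$, and chain the estimates to land in $r(g)\,\nbhd{c\e}{G_{2}}\subseteq\nbhd{c\e}{G_{2}}r(g)\nbhd{c\e}{G_{2}}$. Up to notation ($\phi_{2}$ versus $\ln_{G_{2}}\circ L_{r(g)^{-1}}$), this is identical to the paper's proof.

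The difference is your last paragraph. You correctly notice that Definition \ref{def: roundomorphism} demands $f$ to be a \emph{continuous} function on $G_{1}$, and that the pointwise estimate alone does not provide that; the paper does not address this at all. Your partition--of--unity sketch is the right idea, but the line ``the estimates hold uniformly on the whole neighborhood $V(g)$ with the same constants'' elides a genuine subtlety: Proposition \ref{prop: multplication in balls is addition in coordinates} does give the $G_{1}$--side estimate uniformly for $h\in\overline V$, but your $\phi_{2}$ is anchored at $r(g)$, so the chained bound gives $r(\nbhd{\e}{G_{1}}h\nbhd{\e}{G_{1}})\subseteq\phi_{2}^{-1}\bigl(\phi_{2}(r(h))+B_{L c\e}\bigr)$, which is not literally $r(h)\nbhd{Lc\e}{G_{2}}$. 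To convert one into the other you need a uniform-on-compacts distortion bound when recentering the exponential chart from $r(g)$ to $r(h)$ (equivalently, a uniform bound on the transition maps $\phi_{2}^{(h)}\circ(\phi_{2}^{(g)})^{-1}$ for $h\in\overline V$). That bound exists by compactness of $r(\overline V)$, so the gap is fillable, but it does need to be said explicitly before the partition-of-unity majorization can be applied.
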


\begin{proof}
Let $g\in G_{1}$ with $\phi_{1}:U\to\mathbb{R}^{n}$ a chart at $g$.
By Proposition \ref{prop: multplication in balls is addition in coordinates},
there is an open neighborhood $V\subset U$ of $g$ and $\e_{0},c_{1}>0$
such that for every $0<\e<\e_{0}$,
\[
\phi_{1}\left(\mathcal{O}_{\e}g\mathcal{O}_{\e}\right)\subseteq\phi\left(g\right)+B_{c_{1}\e}.
\]
Let $L_{r(g)^{-1}}:G_{2}\to G_{2}$ be the left translation by $r(g)^{-1}$,
and let $W$ be an open neighborhood of $1_{G_{2}}$ such that $\ln_{G_{2}}|_{W}$
is a diffeomorphism onto an open neighborhood of $\mathfrak{g}_{2}$.
We may assume that $L_{r(g)^{-1}}\circ r(U)\subseteq W$. We get that
there is $c_{2}>0$ such that for every $\e<\e_{0}$:
\[
\ln_{G_{2}}\circ L_{r(g)^{-1}}\circ r\left(\mathcal{O}_{\e}g\mathcal{O}_{\e}\right)\subseteq\ln_{G_{2}}\circ L_{r(g)^{-1}}\circ r\circ\phi^{-1}\left(\phi\left(g\right)+B_{c_{1}\e}\right)\subseteq B_{c_{2}\e}.
\]
As a result,
\[
r\left(\mathcal{O}_{\e}g\mathcal{O}_{\e}\right)\subseteq r(g)\cdot\exp_{G_{2}}\left(B_{c_{2}\e}\right).
\]
\end{proof}

\section{\label{sec:Well-roundedness-of-fibers}Well roundedness of fibered
families }

In the previous section we have developed a machinery to establish
whether a fixed set $\Gset\subset G$ is LWR (Proposition \ref{prop: BCS is well rounded}),
and whether a family of the form $\left\{ P_{T}^{\prime}Q_{T}^{\prime}\right\} $
is LWR (Corollary \ref{cor: roundo to a produt grp}), where $P_{T}^{\prime}\subset P$,
$Q_{T}^{\prime}\subset Q$ and $G=PQ$ is a decomposition of $G$
into subgroups $P$ and $Q$. The latter applies for any number of
components in the decomposition, but for brevity we wrote it here
with two components only. In this section we will extend our machinery
to handle families of sets with the more complicated structure of
a fiber product, namely sets of the form $\cup_{z\in P_{T}^{\prime}}Q_{z}^{\prime}$,
where again $P_{T}^{\prime}\subset P$, $Q_{z}^{\prime}\subset Q$
and $G=PQ$. The tool of roundomorphisms allows us to reduce to well
roundedness in $P$ and in $Q$ separately, namely to work in $P\times Q$,
which is what we will do. 

We start by formulating a regularity condition on the fibers in $Q$. 
\begin{defn}
\label{def: BLC}Let $P$ and $H$ be Lie groups and $\nbhd{\e}P$
and $\nbhd{\e}H$ families of coordinate balls. Let $\symset$ be
a subset of $P$, and consider the family $\fam{\mathcal{\fdomN}}{\symset}=\left\{ \domN_{z}\right\} _{z\in\symset}$,
where $\domN_{z}\subseteq H$. We say that the family $\fam{\mathcal{\fdomN}}{\symset}$
is \emph{bounded Lipschitz continuous }(or \textbf{BLC}) w.r.t $\nbhd{\e}P$
and $\nbhd{\e}H$ and with parameters $\left(C_{\fdomN},\volmin,B\right)$,
where $C_{\fdomN},\volmin$ are positive real numbers and $B$ is
a bounded subset of $H$, if for every $0<\e<C_{\fdomN}^{-1}$ the
following hold:
\begin{enumerate}
\item Every $D_{z}$ is LWR with parameters $\left(C_{\fdomN},1/C_{\fdomN}\right)$. 
\item If $z^{\prime}\subseteq\nbhd{\e}Pz\nbhd{\e}P$ for $z,z^{\prime}\in\symset$,
then $\mathcal{D}_{z}^{\left(-C_{\fdomN}\epsilon\right)}\subseteq\domN_{z^{\prime}}\subseteq\mathcal{D}_{z}^{\left(+C_{\fdomN}\epsilon\right)}$.
\item The volume of $\domN_{z}$ (w.r.t. a Haar measure of $H$) is bounded
uniformly from below by a positive constant \textbf{$\volmin$}. 
\item $\domN_{z}\subseteq\ball{}$ for some bounded set $B$ and every $z\in\symset$. 
\end{enumerate}
For convenience, we will always assume WLOG that $C_{\fdomN}\geq1$. 
\end{defn}

The following proposition and corollary are concerned with certain
manipulations that can be performed on fibered sets, while maintaining
the BLC property of the fibers. These manipulations include pulling
back the fibers by a locally-Lipschitz map, and enlarging the basis
set by taking a product with another set. 
\begin{prop}
\label{prop: same fiber different space}Let $P,P_{0}$ be Lie groups
and suppose that $r:P_{0}\to P$ is an $f$-locally Lipschitz map
(Definition \ref{def: roundomorphism}). Let $\symset\subseteq P$
and $\mathcal{\symset}_{0}:=r^{-1}\left(\symset\right)\subseteq P_{0}$.
If the family 
\[
\fam{\fdomN}{\symset}=\left\{ \domN_{z}\right\} _{z\in\symset}
\]
is BLC with parameters $\left(C,\volmin,B\right)$, then the family
\[
\fam{\fdomN}{\mathcal{E}_{0}}=\left\{ \domN_{r\left(z_{0}\right)}\right\} _{z_{0}\in\mathcal{E}_{0}}
\]
is BLC with parameters $\left(FC,\volmin,B\right)$, where $F=\sup_{g\in r^{-1}\left(\symset\right)}f(g)<\infty$. 
\end{prop}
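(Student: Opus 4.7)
The plan is to verify the four conditions in Definition \ref{def: BLC} for the family $\{\domN_{r(z_0)}\}_{z_0\in\symset_0}$ with parameters $(FC,\volmin,B)$, assuming each holds for $\{\domN_z\}_{z\in\symset}$ with $(C,\volmin,B)$. Conditions (3) and (4) are essentially free: every fiber in the new family equals some fiber $\domN_z$ from the original family (with $z=r(z_0)\in\symset$), so the uniform lower bound $\volmin$ on volumes and the containment in the bounded set $B$ are inherited verbatim.

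For condition (1), each $\domN_{r(z_0)}$ is LWR with parameters $(C,1/C)$ by hypothesis. Since enlarging the Lipschitz constant and shrinking $\e_0$ only weakens the defining inequality, and $FC\geq C$ (noting $F\geq 1$ may be assumed by replacing $F$ with $\max\{F,1\}$ without loss), each $\domN_{r(z_0)}$ is in particular LWR with $(FC,1/(FC))$.

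The main point is condition (2), which is where the factor $F$ enters. Suppose $z_0,z_0'\in\symset_0$ satisfy $z_0'\in\nbhd{\e}{P_0}z_0\nbhd{\e}{P_0}$ with $0<\e<(FC)^{-1}$. Applying the local Lipschitz property of $r$ at $z_0$ gives
\[
r(z_0')\in r\bigl(\nbhd{\e}{P_0}z_0\nbhd{\e}{P_0}\bigr)\subseteq \nbhd{f(z_0)\e}{P}r(z_0)\nbhd{f(z_0)\e}{P}\subseteq \nbhd{F\e}{P}r(z_0)\nbhd{F\e}{P},
\]
where in the last step we used $f(z_0)\leq F$ because $z_0\in r^{-1}(\symset)$. (The hypothesis that this supremum is finite is invoked here.) Since $F\e<C^{-1}$, the BLC property of the original family at the pair $r(z_0),r(z_0')\in\symset$ yields
\[
\domN_{r(z_0)}^{(-C\cdot F\e)}\subseteq\domN_{r(z_0')}\subseteq\domN_{r(z_0)}^{(+C\cdot F\e)},
\]
which is exactly condition (2) with constant $FC$. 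No step is genuinely hard; the only thing to watch is the bookkeeping of which pair of neighborhoods $\nbhd{\e}{P_0}$ vs $\nbhd{F\e}{P}$ is used, and that we have verified $r(z_0)\in\symset$ so that the hypotheses of the original BLC family apply to $r(z_0)$ and $r(z_0')$.
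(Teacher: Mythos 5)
Your proof is correct and follows essentially the same route as the paper's: properties 3 and 4 are inherited verbatim because the new family's fibers form a subset of the original family's, property 1 holds after weakening the Lipschitz constant to $FC$, and property 2 is obtained by pushing the $\nbhd{\e}{P_0}$-perturbation through $r$ via local Lipschitzity and the bound $f\leq F$, then invoking the original BLC condition at scale $F\e$. Your extra remark that one should take $F\geq 1$ (to keep $FC\geq C$, consistent with the standing convention $C_{\fdomN}\geq 1$) is a minor but worthwhile precision that the paper leaves implicit.
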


\begin{proof}
Since $\fam{\fdomN}{\mathcal{\symset}_{0}}\subset\fam{\fdomN}{\symset}$,
properties 1, 3, and 4 of BLC hold automatically in $\fam{\fdomN}{\mathcal{\symset}_{0}}$,
and it is only left to verify the second property. Indeed, if $z_{0}^{\prime}\in\nbhd{\e}{P_{0}}z_{0}\,\nbhd{\e}{P_{0}}$,
then by local Lipschitzity and definition of $F$, $r\left(z_{0}^{\prime}\right)\in\nbhd{F\e}Pr\left(z_{0}\right)\nbhd{F\e}P$.
Since $\fam{\fdomN}{\symset}$ is BLC then for $\epsilon\leq\frac{1}{FC}$
we obtain
\[
\domN_{r\left(z_{0}^{\prime}\right)}^{\left(-C\epsilon\right)}\subseteq\domN_{r\left(z_{0}\right)}\subseteq\domN_{r\left(z_{0}^{\prime}\right)}^{\left(+C\epsilon\right)}.
\]
\end{proof}
\begin{cor}
\label{cor: larger base space}Let $P\times Q$ be a product of Lie
groups and let $\symset\subseteq P$, $\mathcal{\mathcal{\symset^{\prime}}}=\symset\times Q$.
If $\fam{\fdomN}{\symset}=\left\{ \domN_{z}\right\} _{z\in\symset}$
is BLC w.r.t. $\nbhd{\e}P$, then 
\[
\fam{\fdomN}{\mathcal{E^{\prime}}}=\left\{ \domN_{\left(z,q\right)}\right\} _{\left(z,q\right)\in\mathcal{E^{\prime}}}\text{ such that \ensuremath{\domN_{\left(z,q\right)}=\domN_{z}\:\forall q\in Q}}
\]
is BLC with the same parameters and w.r.t. $\nbhd{\e}P\times\nbhd{\e}Q$. 
\end{cor}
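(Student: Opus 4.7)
The plan is to realize this corollary as a direct instance of Proposition \ref{prop: same fiber different space}, by taking the coordinate projection as the locally Lipschitz map.

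First I would set $P_0 := P \times Q$ equipped with the product coordinate balls $\nbhd{\e}{P \times Q} := \nbhd{\e}{P} \times \nbhd{\e}{Q}$, and let $r := \pi_P : P \times Q \to P$ be the projection onto the first factor. With this choice $r^{-1}(\symset) = \symset \times Q = \symset^{\prime}$, and the family $\{\domN_{r(z,q)}\}_{(z,q) \in \symset^{\prime}}$ produced by the proposition coincides with $\{\domN_{(z,q)}\}_{(z,q) \in \symset^{\prime}}$ in the statement, since $r(z,q) = z$ and by definition $\domN_{(z,q)} = \domN_z$.

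Next I would verify that $\pi_P$ is $f$-locally Lipschitz in the sense of Definition \ref{def: roundomorphism} with $f \equiv 1$. This is immediate from the product structure of the identity neighborhoods:
\[
\pi_P\bigl(\nbhd{\e}{P\times Q}\,(z,q)\,\nbhd{\e}{P\times Q}\bigr) \;=\; \nbhd{\e}{P}\, z\, \nbhd{\e}{P} \;=\; \nbhd{\e}{P}\, \pi_P(z,q)\, \nbhd{\e}{P},
\]
so the constant function $f \equiv 1$ works, and $F := \sup_{g \in r^{-1}(\symset)} f(g) = 1 < \infty$. Applying Proposition \ref{prop: same fiber different space} then yields that $\fam{\fdomN}{\symset^{\prime}}$ is BLC with parameters $(FC,\volmin,B) = (C,\volmin,B)$, exactly as claimed.

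There is no real obstacle here: the proof is essentially a bookkeeping exercise once $r$ is identified. The only point that deserves a moment's attention is checking that $\nbhd{\e}{P} \times \nbhd{\e}{Q}$ constitutes a valid family of coordinate balls on $P \times Q$ in the sense of Definition \ref{def: coordinate balls}; this holds because the Lie exponential on $P \times Q$ factors as $(\exp_P, \exp_Q)$, so the product of balls agrees with the exponential image of a Euclidean ball in the product Lie algebra up to the constants permitted by Definition \ref{def: equivalence of nbhds}, and by Remark \ref{rem: coordinate balls equivalent} any such choice is equivalent to the one prescribed by Assumption \ref{assu: Our O_e }.
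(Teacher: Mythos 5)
Your proof is correct and matches the paper's own argument exactly: both invoke Proposition \ref{prop: same fiber different space} with the first-coordinate projection $r = \pi_P : P \times Q \to P$, observe that it is $f$-locally Lipschitz with $f \equiv 1$ (so $F = 1$), and conclude that the BLC parameters are unchanged. The extra remarks you add about $\nbhd{\e}{P} \times \nbhd{\e}{Q}$ being a valid family of coordinate balls are sound but not part of the paper's brief proof.
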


\begin{rem}
Clearly we can replace the group $Q$ in the definition of $\symset^{\prime}$
with any subset $\Gset\subseteq Q$.
\end{rem}

\begin{proof}
This follows from Proposition \ref{prop: same fiber different space}
using the projection map
\[
r:P\times Q\to P
\]
which is an $f$-local Lipschitz map with $f\equiv1$.
\end{proof}
We now turn to the concluding result of this section:
\begin{prop}
\label{prop: Fibered is LWR in direct product}Let $\left\{ \symset_{T}\right\} _{T>0}$
be an increasing family inside a Lie group $P$, and $\symset:=\cup_{T>0}\symset_{T}$.
Let $\fam{\fdomN}{\symset}=\left\{ \domN_{z}\right\} _{z\in\symset}$
where $\domN_{z}\subset H$, and consider the family 
\[
\Gset_{T}=\bigcup_{z\in\symset_{T}}z\times\domN_{z}\subseteq P\times H.
\]
If $\left\{ \symset_{T}\right\} _{T>0}$ is LWR with parameters $\left(T_{0},C_{\symset}\right)$,
and $\mathcal{\fdomN}_{\symset}$ is BLC w.r.t. a family $\left\{ \nbhd{\e}P,\nbhd{\e}H\right\} _{\e>0}$
of coordinate balls and with parameters $\left(C_{\mathcal{\fdomN}},\volmin,B\right)$,
then $\Gset_{T}$ is LWR w.r.t the coordinate balls $\nbhd{\e}P\times\nbhd{\e}H\subset P\times H$
and with parameters $\left(T_{0},C_{\Gset}\right)$ where
\[
C_{\Gset}\prec C_{\fdomN}c\left(1+C_{\mathcal{\fdomN}}\right)+\frac{\volmax}{\volmin}C_{\symset},
\]
 $\volmax=\mu_{H}\left(\ball{}\right)$ and $c\geq1$ is a constant
such that for $0<\epsilon,\delta<\frac{1}{c}$ one has that $\nbhd{\e}H\nbhd{\delta}H\subseteq\nbhd{c\left(\e+\delta\right)}H$
(see Lemma \ref{lem: Connectivity + additivity of coord. balls}). 
\end{prop}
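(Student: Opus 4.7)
The plan is to apply Fubini on $P\times H$ equipped with the product measure and the product coordinate balls $\nbhd{\e}{P\times H}=\nbhd{\e}P\times\nbhd{\e}H$, slicing the enlarged/shrunken sets $\Gset_T^{(\pm\e)}$ over the first coordinate. A direct unwinding of the definitions shows that the $p$-slice of $\Gset_T^{(+\e)}$ is non-empty precisely when $p\in\symset_T^{(+\e)}$, and in that case it equals $\bigcup\{\domN_z^{(+\e)} : z\in\nbhd{\e}Pp\nbhd{\e}P\cap\symset_T\}$; similarly, the $p$-slice of $\Gset_T^{(-\e)}$ is empty unless $p\in\symset_T^{(-\e)}$, in which case it equals $\bigcap\{\domN_z^{(-\e)}:z\in\nbhd{\e}Pp\nbhd{\e}P\}$. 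So the problem splits into: (i) control the slice for $p$ in the ``interior'' $\symset_T^{(-\e)}$, and (ii) control only the upper slice for $p$ in the collar $\symset_T^{(+\e)}\setminus\symset_T^{(-\e)}$.

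For step (i), I pick any reference $z_0\in\nbhd{\e}Pp\nbhd{\e}P\subseteq\symset_T$. Any other $z$ in that ball satisfies $z\in\nbhd{2c\e}Pz_0\nbhd{2c\e}P$ by the additivity property of coordinate balls (Lemma \ref{lem: Connectivity + additivity of coord. balls}), so BLC (for $\e$ sufficiently small) gives $\domN_{z_0}^{(-2cC_{\fdomN}\e)}\subseteq\domN_z\subseteq\domN_{z_0}^{(+2cC_{\fdomN}\e)}$. Applying additivity once more inside $H$, I will deduce the clean sandwich
\[
\domN_{z_0}^{(-c_1\e)}\subseteq\Gset_T^{(-\e)}_p\subseteq\Gset_T^{(+\e)}_p\subseteq\domN_{z_0}^{(+c_1\e)},
\]
with $c_1\asymp c(1+C_{\fdomN})$. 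Since $\domN_{z_0}$ itself is LWR with parameter $C_{\fdomN}$, this sandwich lets me conclude $\mu_H(\Gset_T^{(+\e)}_p)\leq(1+C_{\fdomN}c_1\e)\,\mu_H(\Gset_T^{(-\e)}_p)$ for every $p\in\symset_T^{(-\e)}$, and also the uniform lower bound $\mu_H(\Gset_T^{(-\e)}_p)\geq\volmin/2$ (again using LWR of $\domN_{z_0}$ together with $\mu_H(\domN_{z_0})\geq\volmin$).

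For step (ii), for $p$ in the collar I only need the trivial upper bound $\mu_H(\Gset_T^{(+\e)}_p)\leq\mu_H(\domN_{z_0}^{(+c_1\e)})\leq 2\volmax$, where the factor $2$ again comes from LWR of $\domN_{z_0}$ together with $\domN_{z_0}\subseteq B$. Integrating the pointwise bounds over $P$ and separating the two zones gives
\[
\mu(\Gset_T^{(+\e)})\leq(1+C_{\fdomN}c_1\e)\,\mu(\Gset_T^{(-\e)})+2\volmax\,\mu_P\bigl(\symset_T^{(+\e)}\setminus\symset_T^{(-\e)}\bigr),
\]
while the collar volume is at most $C_{\symset}\e\,\mu_P(\symset_T^{(-\e)})$ by LWR of $\{\symset_T\}$, and $\mu(\Gset_T^{(-\e)})\geq(\volmin/2)\,\mu_P(\symset_T^{(-\e)})$ from step (i). Dividing produces
\[
\frac{\mu(\Gset_T^{(+\e)})-\mu(\Gset_T^{(-\e)})}{\mu(\Gset_T^{(-\e)})}\leq C_{\fdomN}c_1\e+\frac{4\volmax}{\volmin}C_{\symset}\e,
\]
which is the claimed estimate after substituting $c_1\asymp c(1+C_{\fdomN})$.

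The main obstacle is bookkeeping in step (i): I must make all four parameters $(\e$'s inside $P$ and inside $H$, and their ``Lipschitz inflation'' factors $2cC_{\fdomN}$ and $c_1)$ small simultaneously, so I need to fix at the outset a threshold on $\e$ of the form $\e\ll 1/(C_{\fdomN}C_{\symset}c)$ ensuring that every invocation of BLC and of LWR of $\symset_T$ and of $\domN_{z_0}$ is legal, and that $\mu_H(\domN_{z_0}^{(-c_1\e)})\geq\tfrac12\mu_H(\domN_{z_0})$. Everything else is a bounded amount of symbol pushing with the additivity property of coordinate balls.
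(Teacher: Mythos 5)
Your proof is correct and takes essentially the same approach as the paper: the paper establishes global set inclusions $Y^- \subseteq \Gset_T^{(-\e)} \subseteq \Gset_T^{(+\e)} \subseteq Y^+$ whose $P$-slices are precisely what you compute via Fubini, and then integrates exactly as you do (the paper partitions the base as $\symset_T$ versus $\nbhd{\e}P\symset_T\nbhd{\e}P\setminus\symset_T$ while you use $\symset_T^{(-\e)}$ versus $\symset_T^{(+\e)}\setminus\symset_T^{(-\e)}$, which is immaterial). One small bookkeeping note: taking $z_0=p$ in step (i), legitimate since $\symset_T^{(-\e)}\subseteq\symset_T$, avoids the extra additivity step in $P$ and gives $c_1=c(1+C_{\fdomN})$ exactly, whereas with an arbitrary $z_0$ you actually get $c_1=c(1+2cC_{\fdomN})$, carrying a superfluous factor of $c$.
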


\begin{proof}
\textbf{Step 1: estimation of }$\Gset_{T}^{\left(+\e\right)}$\textbf{.}
We claim that for $\e<\frac{1}{cC_{\mathcal{\fdomN}}}$ (so $\e<1,\left(cC_{\mathcal{\fdomN}}\right)^{-1}$),
\[
\Gset_{T}^{\left(+\e\right)}\subseteq\left\{ \bigcup_{z\in\symset_{T}}\left(z\times\domN_{z}^{\left(+c\left(1+C_{\mathcal{\fdomN}}\right)\e\right)}\right)\right\} \bigcup\left\{ \minset\times B^{\left(+1\right)}\right\} =:Y^{+},
\]
where 
\[
\minset:=\nbhd{\e}P\symset_{T}\nbhd{\e}P\setminus\symset_{T}.
\]
We shall first bound the affect of $\nbhd{\e}P$ perturbations. For
that recall that $\domN_{z}\subseteq\ball{}$ for all $z\in\symset$.
As a result, for $u,v\in\nbhd{\e}P$ we have
\[
\left(v,e_{H}\right)\Gset_{T}\left(u,e_{H}\right)=\bigcup_{z\in\symset_{T}}\left(vzu\times\domN_{z}\right)\subseteq\left(\bigcup_{z\in\symset_{T}\cap v\symset_{T}u}\left(z\times\domN_{v^{-1}zu^{-1}}\right)\right)\bigcup\left(\minset\times\ball{}\right).
\]
By the second property of BLC, for $\e<\frac{1}{C_{\mathcal{\fdomN}}}$,
this is contained in 
\[
\bigcup_{z\in\symset_{T}}\left(z\times\domN_{z}^{\left(+C_{\mathcal{\fdomN}}\e\right)}\right)\bigcup\left(\minset\times\ball{}\right).
\]
We will now address the $\nbhd{\e}H$ perturbations. To this end,
note that by the first property of BLC, for $\e<\frac{1}{cC_{\mathcal{\fdomN}}}$
\[
\left(\domN_{z}^{\left(+C_{\mathcal{\fdomN}}\e\right)}\right)^{\left(+\epsilon\right)}\subseteq\domN_{z}^{\left(+c\left(1+C_{\mathcal{\fdomN}}\right)\e\right)}.
\]
Combining $\nbhd{\e}P$ and $\nbhd{\e}H$ perturbations together we
obtain,
\[
\Gset_{T}^{\left(+\e\right)}=\nbhd{\e}{}\Gset_{T}\nbhd{\e}{}\subseteq\bigcup_{z\in\symset_{T}}\left(z\times\domN_{z}^{\left(+c\left(1+C_{\mathcal{\fdomN}}\right)\e\right)}\right)\bigcup\left(\minset\times B^{\left(+1\right)}\right)=Y^{+}
\]
(where we have used $\e<1$). 

\textbf{Step 2: estimation of $\Gset_{T}^{\left(-\e\right)}$.} We
claim that for $\e<\frac{1}{cC_{\mathcal{\fdomN}}},$
\[
\Gset_{T}^{\left(-\e\right)}\supseteq\bigcup_{z\in}\left(z\times\domN_{z}^{\left(-c\left(C_{\mathcal{\fdomN}}+1\right)\e\right)}\right)=:Y^{-}.
\]
First notice that if $0<a,\frac{b}{c}-a<\frac{1}{c}$, then $\left(\domN^{\left(-b\right)}\right)^{\left(+a\right)}\subseteq D^{-\frac{b}{c}+a}$,
since 
\[
\nbhd{\frac{b}{c}-a}H\nbhd aH\left(\domN^{\left(-b\right)}\right)\nbhd aH\nbhd{\frac{b}{c}-a}H\subseteq\nbhd bH\left(\domN^{\left(-b\right)}\right)\nbhd bH\subseteq\domN.
\]
Hence, for $\epsilon<\frac{1}{cC_{\mathcal{\fdomN}}}$
\[
\nbhd{\e}HY^{-}\nbhd{\e}H=\bigcup_{z\in\symset_{T}^{\left(-\e\right)}}\left(z\times\left(\domN_{z}^{\left(-\left(C_{\mathcal{\fdomN}}+1\right)c\epsilon\right)}\right)^{\left(+\e\right)}\right)\subseteq\bigcup_{z\in\symset_{T}^{\left(-\e\right)}}\left(z\times\domN_{z}^{\left(-C_{\mathcal{\fdomN}}\epsilon\right)}\right)
\]
for $u,v\in\nbhd{\e}P$ we have
\[
\left(v,e_{H}\right)\nbhd{\e}HY^{-}\nbhd{\e}H\left(u,e_{H}\right)\subseteq\bigcup_{z\in\symset_{T}^{\left(-\e\right)}}\left(vzu\times\domN_{z}^{\left(-C_{\mathcal{\fdomN}}\e\right)}\right)\subseteq\bigcup_{v^{-1}zu^{-1},z\in\symset_{T}}\left(z\times\domN_{v^{-1}zu^{-1}}^{\left(-C_{\mathcal{\fdomN}}\e\right)}\right).
\]
By the second property of BLC, for $\e<\frac{1}{C_{\mathcal{\fdomN}}}$
this is contained in 
\[
\bigcup_{z\in\symset_{T}}\left(z\times\domN_{z}\right).
\]
All in all, we obtain that $\nbhd{\e}{}Y^{-}\nbhd{\e}{}\subseteq\Gset_{T}$,
proving the claim.

\textbf{Step 3: estimation of $\mu\left(\Gset_{T}^{\left(+\e\right)}\right)/\mu\left(\Gset_{T}^{\left(-\e\right)}\right)$.}
Let $C=C_{\fdomN}\left(c\left(1+C_{\mathcal{\fdomN}}\right)\right)$.
Notice that for $\e<\frac{1}{C}$ 
\begin{align*}
\mu_{G}\left(Y^{+}\right) & =\left(1+C\e\right)\mu_{G}\left(\Gset_{T}\right)+\mu_{P}\left(\minset\right)\mu_{H}\brac{\ball{}^{\left(+1\right)}}\\
 & \leq\left(1+C\e\right)\mu_{G}\left(\Gset_{T}\right)+\mu_{P}\left(\minset\right)\volmax
\end{align*}
and that 
\begin{align*}
\mu_{G}\left(Y^{-}\right) & =\frac{1}{1+C\e}\:\mu_{G}\left(\bigcup_{z\in\symset_{T}^{\left(-\e\right)}}\left(z\times\domN_{z}\right)\right).
\end{align*}
Combining what we have shown in the previous steps with estimations
for $\mu_{G}\left(Y^{+}\right)$ and $\mu_{G}\left(Y^{-}\right)$,
we obtain that for $\e<\frac{1}{C}$:
\begin{align*}
 & \frac{\mu_{G}\left(\Gset_{T}^{\left(+\e\right)}\right)}{\mu_{G}\left(\Gset_{T}^{\left(-\e\right)}\right)}\leq\frac{\mu_{G}\left(Y^{+}\right)}{\mu_{G}\left(Y^{-}\right)}\\
 & \leq\left(1+C\e\right)^{2}\frac{\mu_{G}\left(\Gset_{T}\right)}{\mu_{G}\left(\bigcup_{z\in\symset_{T}^{\left(-\e\right)}}\left(z\times\domN_{z}\right)\right)}+\volmax\left(1+C\e\right)\cdot\frac{\mu_{P}\left(\minset\right)}{\mu_{G}\left(\bigcup_{z\in\symset_{T}^{\left(-\e\right)}}\left(z\times\domN_{z}\right)\right)}
\end{align*}
where:
\begin{itemize}
\item for $\e<1$,
\[
\left(1+C\e\right)^{2}\leq1+3C\e
\]
 
\item for $\e<\frac{1}{C}$,
\[
\volmax\left(1+C\e\right)\leq2\volmax
\]
\item for $\e<C_{\symset}^{-1}$ and $T>T_{0}$
\[
\frac{\mu_{G}\left(\Gset_{T}\right)}{\mu_{G}\left(\bigcup_{z\in\symset_{T}^{\left(-\e\right)}}\left(z\times\domN_{z}\right)\right)}=1+\frac{\mu_{G}\left(\Gset_{T}\right)-\mu_{G}\left(\bigcup_{z\in\symset_{T}^{\left(-\e\right)}}\left(z\times\domN_{z}\right)\right)}{\mu_{G}\left(\bigcup_{z\in\symset_{T}^{\left(-\e\right)}}\left(z\times\domN_{z}\right)\right)}=1+\frac{\mu_{G}\left(\bigcup_{z\in\symset_{T}\setminus\symset_{T}^{\left(-\e\right)}}\left(z\times\domN_{z}\right)\right)}{\mu_{G}\left(\bigcup_{z\in\symset_{T}^{\left(-\e\right)}}\left(z\times\domN_{z}\right)\right)}
\]
\[
\leq1+\frac{\mu_{P}\left(\symset_{T}\setminus\symset_{T}^{\left(-\e\right)}\right)\volmax}{\mu_{P}\left(\symset_{T}^{\left(-\e\right)}\right)\volmin}\leq1+\frac{\mu_{P}\left(\symset_{T}^{\left(+\e\right)}\setminus\symset_{T}^{\left(-\e\right)}\right)}{\mu_{P}\left(\symset_{T}^{\left(-\e\right)}\right)}\cdot\frac{\volmax}{\volmin}\leq1+\frac{\volmax}{\volmin}C_{\symset}\e
\]
\item and for $\e<C_{\symset}^{-1}$ and $T>T_{0}$, 
\[
\frac{\mu_{P}\left(\minset\right)}{\mu_{G}\left(\bigcup_{z\in\symset_{T}^{\left(-\e\right)}}\left(z\times\domN_{z}\right)\right)}=\frac{\mu_{P}\left(\nbhd{\e}P\symset_{T}\nbhd{\e}P\setminus\symset_{T}\right)}{\mu_{G}\left(\bigcup_{z\in\symset_{T}^{\left(-\e\right)}}\left(z\times\domN_{z}\right)\right)}\leq\frac{\mu_{P}\left(\symset_{T}^{\left(+\e\right)}\setminus\symset_{T}^{\left(-\e\right)}\right)}{\mu_{P}\left(\symset_{T}^{\left(-\e\right)}\right)\cdot\volmin}\leq\frac{C_{\symset}}{\volmin}\e.
\]
\end{itemize}
All in all, for $\e<\frac{1}{C+C_{\symset}}$ (so that $\e\leq C^{-1},C_{\symset}^{-1}$)
and for $T>T_{0}$:
\begin{align*}
\frac{\mu_{G}\left(Y^{+}\right)}{\mu_{G}\left(Y^{-}\right)} & \leq\left(1+3C\e\right)\cdot\left(1+\frac{\volmax}{\volmin}C_{\symset}\e\right)+2\volmax\cdot\frac{C_{\symset}}{\volmin}\e\\
 & \leq1+\left(6\frac{\volmax}{\volmin}C_{\symset}+3C\right)\e.
\end{align*}
In order to have that LWR holds for $\e<C_{\Gset}^{-1}$, we let
$C_{\Gset}=6\frac{\volmax}{\volmin}C_{\symset}+3C$. 
\end{proof}
\begin{prop}
\label{prop: BLC for R^n}Let $H=\mathbb{R}^{n}$ and \textup{$\nbhd{\epsilon}H=B_{\epsilon}$
be a radius $\epsilon$ euclidean ball. Suppose that $\fam{\fdomN}{\symset}$
satisfies conditions 3 and 4 of the definition of BLC and instead
of condition 1 and 2 it satisfies that for $\epsilon<C^{-1}$ (here
$C\geq1$):}\renewcommand{\labelenumi}{(\roman{enumi})} 
\begin{enumerate}
\item $\domN_{z}+\ball{\e}\subseteq\left(1+C\e\right)\domN_{z}$
\item If $z^{\prime}\subseteq\nbhd{\e}Pz\nbhd{\e}P$ for $z,z^{\prime}\in\symset$,
then $\domN_{z^{\prime}}\subseteq\left(1+C\e\right)\domN_{z}$.
\end{enumerate}
Then $\fam{\fdomN}{\symset}$ is BLC with parameters $\left(16^{n+1}RC,\volmin,B\right)$,
where $R<\infty$ is the radius of $B$ from property 4 of BLC).
\end{prop}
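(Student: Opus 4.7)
The plan is to verify conditions (1) and (2) of Definition \ref{def: BLC}; conditions (3) and (4) are assumed. The crucial geometric bridge is that, since $\domN_{z}\subseteq B$ has radius $R$, the multiplicative perturbation $(1+C\e)\domN_{z}$ is contained in the additive one: for $d\in\domN_{z}$, $|(1+C\e)d-d|=C\e|d|\leq CR\e$, hence $(1+C\e)\domN_{z}\subseteq\domN_{z}+B_{CR\e}\subseteq\domN_{z}^{(+CR\e)}$ (recall that in $\mathbb{R}^{n}$, $\mathcal{O}_{\delta}\domN\mathcal{O}_{\delta}=\domN+B_{2\delta}$). This is what translates the multiplicatively phrased hypotheses (i), (ii) into the additive BLC formulation.

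For condition (1) I also need an erosion counterpart of (i). If $x=(1+C\e)^{-1}d\in(1+C\e)^{-1}\domN_{z}$ and $y\in B_{\e/(1+C\e)}$, then $(1+C\e)(x+y)=d+(1+C\e)y\in\domN_{z}+B_{\e}\subseteq(1+C\e)\domN_{z}$ by (i), so $x+y\in\domN_{z}$. Re-parameterizing via $\delta=\e/(2(1+C\e))$ yields $(1-2C\delta)\domN_{z}\subseteq\domN_{z}^{(-\delta)}$ for $\delta<1/(2C)$. Combined with $\domN_{z}^{(+\delta)}\subseteq(1+2C\delta)\domN_{z}$ (from (i) with $\e=2\delta$), condition (1) follows from the scaling comparison
\[
\frac{\mu_{H}(\domN_{z}^{(+\delta)})}{\mu_{H}(\domN_{z}^{(-\delta)})}\leq\biggl(\frac{1+2C\delta}{1-2C\delta}\biggr)^{\!n},
\]
which a standard Bernoulli-type expansion bounds by $1+(\mathrm{const}_{n})\,C\delta$ for small $\delta$.

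For condition (2) I use (ii) in both directions, exploiting the symmetry $z'\in\nbhd{\e}Pz\nbhd{\e}P\Leftrightarrow z\in\nbhd{\e}Pz'\nbhd{\e}P$, to get $(1+C\e)^{-1}\domN_{z}\subseteq\domN_{z'}\subseteq(1+C\e)\domN_{z}$. The outer inclusion transports via the bridge to $\domN_{z'}\subseteq\domN_{z}^{(+CR\e)}$. For the inner, if $x\in\domN_{z}^{(-C_{\fdomN}\e)}$ then $(1+C\e)x=x+C\e x$ lies in $x+B_{CR\e}\subseteq\domN_{z}$ whenever $C_{\fdomN}\geq CR$, so $x\in(1+C\e)^{-1}\domN_{z}\subseteq\domN_{z'}$. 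Finally, choosing $C_{\fdomN}=16^{n+1}RC$ simultaneously dominates the LWR constant from (1) and the lower bound $CR$ arising here. The main obstacle is purely bookkeeping: one must arrange a single $C_{\fdomN}$ and threshold $1/C_{\fdomN}$ that make every inclusion and volume ratio fit uniformly, which is why the somewhat generous value $16^{n+1}RC$ is chosen.
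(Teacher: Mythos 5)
Your proof is correct and follows essentially the same route as the paper's: translate the multiplicative hypotheses (i)–(ii) into additive form via the radius bound $\|x\|\le R$, prove an erosion counterpart of (i) by the same algebraic trick of rescaling by $(1+C\e)^{-1}$, and deduce the volume ratio and the two-sided $z$ vs.\ $z'$ inclusions using the symmetry $z'\in\nbhd{\e}Pz\nbhd{\e}P\Leftrightarrow z\in\nbhd{\e}Pz'\nbhd{\e}P$. The only slips are cosmetic bookkeeping (e.g.\ the reparametrization $\delta=\e/(2(1+C\e))$ forces $\delta<1/(4C)$ rather than $\delta<1/(2C)$, and you do not carry the constants through to verify $16^{n+1}RC$ explicitly), but the substance matches the paper.
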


\begin{proof}
We start with checking the first property of BLC. For this we first
show that for all $z\in\mathcal{E}$ and $\epsilon<\frac{1}{4C}$
\begin{equation}
\frac{1}{1+8C\epsilon}\mathcal{D}_{z}\subseteq\mathcal{D}_{z}^{\left(-\epsilon\right)}.\label{eq: equation}
\end{equation}
Indeed, since $\epsilon<C^{-1}$ then by (i) $\domN_{z}+\ball{\e}\subseteq\left(1+C\e\right)\domN_{z}$,
and so
\[
\frac{1}{1+2C\epsilon}\mathcal{D}_{z}+B_{\frac{\epsilon}{2}}\subseteq\frac{1}{1+2C\epsilon}\mathcal{D}_{z}+B_{\frac{\epsilon}{1+C\epsilon}}\subseteq\frac{1}{1+C\epsilon}\left(\mathcal{D}_{z}+B_{\epsilon}\right)\subseteq\domN_{z}.
\]
As a result, 
\[
\frac{1}{1+8C\epsilon}\mathcal{D}_{z}+\ball{2\e}\subseteq\mathcal{D}_{z}
\]
which implies \ref{eq: equation}. Now, for $\epsilon<\frac{1}{16C}$
we have that
\[
\frac{\mu_{H}\left(\mathcal{D}_{z}^{\left(+\epsilon\right)}\right)}{\mu_{H}\left(\mathcal{D}_{z}^{\left(-\epsilon\right)}\right)}\leq\frac{\mu_{H}\left(\left(1+2C\e\right)\domN_{z}\right)}{\mu_{H}\left(\frac{1}{1+8C\epsilon}\mathcal{D}_{z}\right)}=\left(\frac{1+2C\e}{\frac{1}{1+8C\epsilon}}\right)^{n}<\left(1+11C\epsilon\right)^{n}<1+16^{n+1}C\epsilon.
\]
So $\left\{ \mathcal{D}_{z}\right\} $ is LWR with Lipschitz constant
$16^{n+1}C$. 

For the second property of BLC we first show that for all $\epsilon>0$
\[
\mathcal{D}_{z}^{\left(-\epsilon\right)}\subseteq\frac{1}{1+\frac{\epsilon}{R}}\mathcal{D}_{z}\:;\quad\left(1+C\e\right)\mathcal{D}_{z}\subseteq\mathcal{D}_{z}^{\left(+CR\epsilon\right)}.
\]
Indeed, If $x\in\mathcal{D}_{z}^{\left(-\epsilon\right)}$, then $x+\text{\ensuremath{\frac{x}{R}}}\epsilon\in\mathcal{D}_{z}^{\left(-\epsilon\right)}+\ball{\e}\subseteq\mathcal{D}_{z}$.
Hence, $x\in\frac{1}{1+\frac{\epsilon}{R}}\mathcal{D}_{z}$ and so
$\mathcal{D}_{z}^{\left(-\epsilon\right)}\subseteq\frac{1}{1+\frac{\epsilon}{R}}\mathcal{D}_{z}$.
Next, if $y\in\left(1+C\e\right)\mathcal{D}_{z}$, we can write $y=x+C\epsilon x$
for $x\in\mathcal{D}_{z}$ and so $y\in\mathcal{D}_{z}+\ball{RC\e}\in\mathcal{D}_{z}^{\left(+CR\epsilon\right)}$.
Now for $\epsilon<\frac{1}{C}$ let $z\in\symset$ and $z^{\prime}\subseteq\nbhd{\e}Pz\nbhd{\e}P\cap\symset$
, and then by (ii),
\[
\domN_{z^{\prime}}\subseteq\left(1+C\e\right)\domN_{z}\subseteq\mathcal{D}_{z}^{\left(+CR\epsilon\right)}
\]
 and 
\[
\mathcal{D}_{z}^{\left(-RC\epsilon\right)}\subseteq\frac{1}{1+C\e}\domN_{z}\subseteq\mathcal{D}_{z^{\prime}}.
\]
So the second property of BLC holds with the constant $RC$, and all
in all, both first and second properties are satisfied with $C_{\fdomN}=16^{n+1}CR$.
\end{proof}

\begin{cor}
\label{cor: BCS is LCHC}Assume $\domN\subseteq\RR^{n}$ is bounded,
convex and has a non-empty interior, then $\mathcal{D}$ is LWR.
\end{cor}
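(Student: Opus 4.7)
The plan is to deduce this corollary from Proposition \ref{prop: BLC for R^n} applied to the trivial one-element family $\{\domN_z\} = \{\domN\}$ (say, indexed by a singleton group $P = \{e\}$). Condition (ii) of that proposition is then vacuous, and the first clause of the BLC property will directly give the desired LWR of $\domN$; conditions 3 and 4 of BLC are free, since $\domN$ is bounded with positive measure (it has non-empty interior). So the whole task reduces to verifying condition (i), namely the existence of $C>0$ such that $\domN + \ball{\e} \subseteq (1 + C\e)\,\domN$ for all small $\e$.

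For this I would first use translation-invariance of LWR in $\RR^{n}$ (our identity neighborhoods are the Euclidean balls $\ball{\e}$ centered at $0$ by Assumption \ref{assu: Our O_e }, and the expressions $\ball{\e}\,\del\domN\,\ball{\e}$ in Lemma \ref{lem: LWR for single set} are translation invariant) to assume WLOG that $0 \in \interior{\domN}$. Pick $r>0$ with $\ball{r}\subseteq\domN$. Then $\ball{\e}\subseteq\frac{\e}{r}\domN$. The key convexity fact is that whenever $\domN$ is convex and $0 \in \domN$, one has $\domN + \lambda\domN \subseteq (1+\lambda)\domN$ for all $\lambda\geq 0$: indeed, for $x,y\in\domN$,
\[
x+\lambda y \;=\; (1+\lambda)\!\left(\tfrac{1}{1+\lambda}x + \tfrac{\lambda}{1+\lambda}y\right)\in (1+\lambda)\domN,
\]
by convexity. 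Combining these two observations, $\domN + \ball{\e}\subseteq \domN + \tfrac{\e}{r}\domN \subseteq \bigl(1+\tfrac{\e}{r}\bigr)\domN$, verifying (i) with $C = 1/r$.

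With (i) and (ii) in hand, Proposition \ref{prop: BLC for R^n} applies and yields that the constant family $\{\domN\}$ is BLC; in particular (property 1 of Definition \ref{def: BLC}) the set $\domN$ is LWR, as claimed. There is no serious obstacle here: the only mild care needed is the translation step to put $0$ in $\interior{\domN}$ before exploiting the inclusion $\ball{r}\subseteq\domN$, and the standard convex-combination identity above. Everything else is bookkeeping against the general proposition for $\RR^{n}$.
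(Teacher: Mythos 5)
Your proof is correct and follows essentially the same route as the paper: reduce to the case $0\in\interior{\domN}$, invoke Proposition \ref{prop: BLC for R^n} for the constant family, and verify property (i) by picking a ball $\ball{r}\subseteq\domN$ and using a convex-combination argument to show $\domN+\ball{\e}\subseteq(1+\e/r)\domN$. The only cosmetic difference is that you package the convexity step as the general inclusion $\domN+\lambda\domN\subseteq(1+\lambda)\domN$, whereas the paper writes out the same convex combination $x+\e v=(1+\tfrac{\e}{\a})\bigl(\tfrac{1}{1+\e/\a}x+\tfrac{\e/\a}{1+\e/\a}\,\a v\bigr)$ directly.
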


\begin{proof}
It is clearly enough to check the case where the origin is an internal
point. We will show that the constant family $\fam{\mathcal{\fdomN}}{\symset}=\left\{ \domN_{z}\right\} _{z\in\symset}$
with $\domN_{z}=\domN$ for every $z$, is BLC as a set in $\RR^{n}$
using Proposition \ref{prop: BLC for R^n}. The second property of
BLC is trivial since $\domN$ is constant, and the third and fourth
properties hold since $\domN$ is bounded and of positive measure.
It remains to show that $\domN$ satisfies the first property of BLC.
Let $\a>0$ be such that $\domN$ contains a ball of radius $>\a$
around the origin; we show that $\domN+\ball{\e}\subseteq\left(1+\a^{-1}\e\right)\domN$.
Indeed, let $x\in\domN$ and $v\in\RR^{n}$ such that $\norm v=1$.
Then 
\begin{align*}
x+\e v & =\left(x+\e v\right)\exup{\left(1+\frac{\e}{\a}\right)\frac{1}{1+\frac{\e}{\a}}}{_{=1}}=\left(1+\frac{\e}{\a}\right)\cdot\frac{x+\e v}{1+\frac{\e}{\a}}\\
= & \left(1+\frac{\e}{\a}\right)\exd{\left(\frac{1}{1+\frac{\e}{\a}}\cdot x+\frac{\frac{\e}{\a}}{1+\frac{\e}{\a}}\cdot\a v\right)}{\left(\star\right)}
\end{align*}
where $\left(\star\right)$ lies in $\domN$, as a convex combination
of the two points $x$, $\a v$ in $\domN$. 
\end{proof}

\section{\label{sec: spread models}Relation between fundamental domains and
quotient spaces 
\global\long\def\Hbcs{B_{H}}%
}

As mentioned in the Introduction, this note is meant to support our
work on equidistribution in various lattice spaces. All of these spaces
are of the form $\mathcal{M}/H$, where $\mathcal{M}$ is a manifold
and $H$ is a Lie group acting on it. We are interested in Boundary
Controllable Sets (and Bounded Lipschitz Continuous families of sets)
in these spaces, which is a differential property; as such, it is
easier to check it in concrete manifolds, than in abstract spaces.
This bring up the need in finding a set of representatives inside
$\manifold$ for the action of $H$, such that one can move the question
of verifying the BCS property from the space $\manifold/H$ to this
subset of $\manifold$. In the case where $H$ is discrete, one can
think of this desired set of representatives as a ``nice'' fundamental
domain in $\manifold$ (see examples below). The precise definition
is the following:
\begin{defn}
\label{def: Spread Model}Let $H$ be a Lie group acting smoothly,
freely and properly on a manifold $\manifold$, and let $\pi:\manifold\to\manifold/H$
denote the associated quotient map. A full set of representatives
$F\subset\manifold$ for $\manifold/H$ is called a \emph{spread model}
for the quotient space $\manifold/H$ if the following conditions
are met: 
\begin{enumerate}
\item $F$ is contained in a finite union of embedded submanifolds $\cup_{\alpha}V_{\alpha}$
of $\manifold$ such that the natural projection $\pi_{\alpha}^{0}:V_{\alpha}\to\mathcal{M}/H_{0}$
(here $H_{0}$ denotes the connected component of $H$) is an open
diffeomorphism onto its image;
\item for each $\alpha$, there exists an open set (w.r.t.\ $V_{\a}$)
$F_{\a}\subseteq F\cap V_{\a}$ such that $\overline{F}\cap V_{\alpha}\subset\overline{F_{\alpha}}$;
\item $F_{\alpha}$ is BCS w.r.t.\ $V_{\alpha}$.  and
\item the quotient map restricted to $\overline{F}$ is proper, namely it
pulls back compact sets to compact sets. 
\end{enumerate}
We will denote $F\smeq\manifold/H$.
\end{defn}

\begin{rem}
One can extend definition also to the case where $H$ acts \textbf{almost
freely} (i.e. the point stabilizer subgroups are finite) on $\mathcal{M}$
by considering the open submanifold $\mathcal{M}_{free}$ on which
the action is free. In that case we add the extra condition that 
$F\subseteq\mathcal{M}$ is a spread model if $F\cap\mathcal{M}_{free}$
is a spread model for the action of $H$ on $\mathcal{M}_{free}$.
\end{rem}

\begin{rem}
\label{rem:sm for discrete groups}If $H=\Gamma$ is a discrete Lie
group, then the conditions in Definition \ref{def: Spread Model}
are satisfied when: $\Gamma$ acts properly and almost freely  on
$\manifold$; $F\subseteq\overline{\interior F}$; the boundary of
$F$ is contained in a finite union of lower dimensional submanifolds
of $\manifold$ and the quotient map restricted to $\overline{F}$
is proper. This is indeed the case since the quotient map $\pi:\manifold_{free}\to\manifold_{free}/\Gamma$
restricted to $\interior F$ is a  diffeomorphism.
\end{rem}

\begin{example}
Let us mention some examples (an explanation follows). 
\begin{enumerate}
\item If $\Lambda$ is a lattice in $\mathbb{R}^{n}$, then any fundamental
domain of $\lat$ which is a convex polygon with a finite number of
edges, is a spread model for $\RR^{n}/\lat$. In particular, its fundamental
parallelepiped and its Dirichlet domain are spread models. 
\item If $\Gamma$ is a discrete group acting properly discontinuously and
freely on $\mathbb{H}^{n}$ by isometries, then any locally finite
fundamental polygon $F$ is a spread model. In particular, the (generic)
Dirchilet domain of a Fuchsian  group is a spread model.
\item Suppose that $G=H\cdot P$, is a product of two closed subgroups having
trivial intersection. Then $P$ is a spread model for the action of
$H$. In particular, the group of upper triangular matrices in $\sl n\left(\mathbb{R}\right)$
is a spread model for the symmetric space $\so n\left(\mathbb{R}\right)\backslash\sl n\left(\mathbb{R}\right)$,
and a minimal parabolic group inside a non-compact simple algebraic
rank one Lie group is a spread model for the associated hyperbolic
space. E.g.\, the upper triangular matrices in $\sl 2\left(\RR\right)$
(resp.\ $\sl 2\left(\CC\right)$) form a spread model for the real
hyperbolic plane (resp.\ 3-space). 
\end{enumerate}
Indeed, we use Remark \ref{rem:sm for discrete groups} to justify
the first two examples. All the conditions are easily seen to be satisfied
except that $\pi|_{\overline{F}}$ is proper in the second example.
We check it here: suppose that $a_{n}\to\infty$ in $\overline{F}$,
but modulo $\Gamma$, the set $\left\{ \Gamma a_{n}\right\} $ is
bounded. As a result, we may assume that $\gamma_{n}.a_{n}\to a$
for some $\gamma_{n}\in\Gamma$. Let $K$ be a compact neighborhood
of $a$. We have that $K\cap\gamma_{n}\overline{F}\neq\emptyset$
for all $n$. Since $F$ is locally finite, we may assume that $\gamma_{n}=\gamma$
for all $n$. This is however an absurd, since we must have that $\gamma.a_{n}\to\infty$.

To check example 3 we use the original definition and notice that
we may choose $V_{\alpha}=F_{\alpha}=P$ (only a single $\alpha$)
and since $P=\overline{P}$ is diffeomorphic via $\pi$ to $H\backslash G$
and the boundary of $F_{\alpha}$ w.r.t. $V_{\alpha}$ is trivial,
we are done.
\end{example}

\subsection{BCS's in space and its spread model correspond}

We start by claiming that a BCS in the spread model projects modulo
$H$ to a BCS in the space $\manifold/H$.
\begin{prop}
\label{prop: BCS from manifold to space}Suppose that $\mathcal{M}$
is a manifold and $\Gamma$ is a discrete group acting on $\mathcal{M}$
freely, properly and smoothly. Let $F\subset\mathcal{M}$ be a spread
model for the action of $\Gamma$. If $B\subset F$ is a BCS (w.r.t.\
$\mathcal{M}$) then so is its projection $\pi\left(B\right)$ to
$\mathcal{M}/\Gamma$.
\end{prop}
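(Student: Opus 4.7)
The plan is to reduce the assertion to a local statement in $\mathcal{M}$ and then transport it through the covering map $\pi$. Since $\Gamma$ is discrete and acts freely and properly, $\pi:\mathcal{M}\to\mathcal{M}/\Gamma$ is a covering map, hence a local diffeomorphism. In particular $\pi^{-1}(\pi(B))=\Gamma\cdot B$, and because taking closure and interior commute with $\pi^{-1}$ for local homeomorphisms,
\[
\pi^{-1}\bigl(\del\pi(B)\bigr)=\del(\Gamma\cdot B).
\]
It therefore suffices to show that $\del(\Gamma\cdot B)$ is, locally in $\mathcal{M}$, contained in a finite union of embedded submanifolds; the BCS property in $\mathcal{M}/\Gamma$ will then follow by pushing down through local charts on which $\pi$ is a diffeomorphism.

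The first key step is the inclusion $\del(\Gamma\cdot B)\subseteq\Gamma\cdot\del B$. Given $x\in\del(\Gamma\cdot B)$, I would pick sequences $x_n=\gamma_n b_n\to x$ with $b_n\in B\subseteq\overline{F}$ and $y_n\to x$ with $y_n\notin\Gamma\cdot B$. Fixing a compact neighborhood $K$ of $x$, the decisive observation is that only finitely many $\gamma\in\Gamma$ satisfy $\gamma\overline{F}\cap K\neq\emptyset$: letting $C:=\pi^{-1}(\pi(K))\cap\overline{F}$, the spread-model hypothesis that $\pi|_{\overline{F}}$ is proper makes $C$ compact, the condition $\gamma\overline{F}\cap K\neq\emptyset$ is equivalent to $\gamma C\cap K\neq\emptyset$, and finiteness then follows from properness of the $\Gamma$-action on $\mathcal{M}$. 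Passing to a subsequence with $\gamma_n=\gamma$ constant yields $\gamma^{-1}x\in\overline{B}$, while $\gamma^{-1}y_n\in B^c$ converges to $\gamma^{-1}x\in\overline{B^c}$; hence $\gamma^{-1}x\in\del B$ and $x\in\Gamma\cdot\del B$.

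Next, for an arbitrary $y\in\mathcal{M}/\Gamma$, lift to $x\in\pi^{-1}(y)$ and pick a relatively compact open neighborhood $U$ of $x$ on which $\pi|_U:U\to V:=\pi(U)$ is a diffeomorphism. Applying the finiteness argument above to $\overline{U}$ produces finitely many $\gamma_1,\dots,\gamma_k\in\Gamma$ with $\gamma_j\overline{F}\cap\overline{U}\neq\emptyset$. For each $j$, the compact set $\gamma_j^{-1}\overline{U}$ is covered by finitely many BCS-neighborhoods of $B$, so $\gamma_j^{-1}\overline{U}\cap\del B$ is contained in a finite union of embedded submanifolds of $\mathcal{M}$; translating by the diffeomorphism $\gamma_j$ and unioning over $j$, the set
\[
U\cap\del(\Gamma\cdot B)\subseteq\bigcup_{j=1}^{k}U\cap(\gamma_j\cdot\del B)
\]
is itself contained in a finite union of embedded submanifolds of $\mathcal{M}$. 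Pushing these submanifolds down by the diffeomorphism $\pi|_U$ exhibits the required open neighborhood $V\ni y$ together with a finite union of embedded submanifolds of $\mathcal{M}/\Gamma$ covering $V\cap\del\pi(B)$, which is what BCS demands.

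The main obstacle is the first step: the spread-model hypothesis (properness of $\pi|_{\overline{F}}$) is precisely what is needed to rule out infinitely many distinct $\gamma$-translates accumulating near a fixed point of $\mathcal{M}$. Without it, $\del(\Gamma\cdot B)$ could fail to be locally tame even though $\del B$ is. The remaining steps are routine covering-map bookkeeping combined with the local, compactness-driven nature of the BCS definition.
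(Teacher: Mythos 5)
Your proof is correct, and it takes a genuinely different route from the paper's. The paper splits $B$ into $B\cap\del F$ and $B_{\mathrm{int}}:=B\cap F^{\circ}$, invokes the topological boundary estimate of Lemma~\ref{lem:boundary calculations}~(2) to obtain $\del\pi(B)\subseteq\pi(\del F)\cup\del_{\pi(F^{\circ})}\pi(B_{\mathrm{int}})$, and then handles the two pieces separately: the first using that $\del F$ is locally contained in a finite union of lower-dimensional submanifolds and $\pi$ is a local diffeomorphism, and the second using that $\pi|_{F^{\circ}}$ is an open diffeomorphism onto its image. You instead work entirely upstairs in $\mathcal{M}$: you identify $\pi^{-1}(\del\pi(B))$ with $\del(\Gamma\cdot B)$ using that $\pi$ is open, prove the inclusion $\del(\Gamma\cdot B)\subseteq\Gamma\cdot\del B$ via a sequential argument, and reduce to a local finiteness statement (only finitely many $\gamma$ with $\gamma\overline{F}\cap K\neq\emptyset$) derived from properness of the $\Gamma$-action together with properness of $\pi|_{\overline{F}}$. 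What your approach buys is that it makes fully explicit the local finiteness that controls how many translates of $\del F$ and $\del B$ can contribute near a given point of $\mathcal{M}/\Gamma$ --- a point the paper's proof passes over somewhat tersely with the phrase \emph{``together with $\pi$ being a local diffeomorphism''}. The paper's route is more compact because the split along $F^{\circ}$ and $\del F$ allows it to quote the spread-model property that $\pi|_{F^{\circ}}$ is a diffeomorphism onto its image and thereby sidestep an explicit orbit count. Both arguments ultimately rest on the same two spread-model ingredients: injectivity of $\pi$ on the interior of $F$ and properness of $\pi|_{\overline{F}}$.
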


The proof requires a lemma. 

\begin{lem}
\label{lem:boundary calculations}Let $X$ be a topological space
together with three subsets $A\subseteq B$ and $C$.
\begin{enumerate}
\item $\del_{B}A\subseteq\del A.$ If furthermore there is am open subset
$W$ of $X$, such that $\overline{A}\subseteq W\subseteq B$, then
$\del_{B}A=\del A$.
\item $\left(\del C\right)\cap A\subseteq\del_{B}\left(C\cap A\right)\cup\del B$.
If furthermore $B$ is open, then $\left(\del C\right)\cap A\subseteq\del_{B}\left(C\cap A\right)$.
\end{enumerate}
\end{lem}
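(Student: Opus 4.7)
The plan is to prove both parts by set-chasing, using the two basic identities
\[
\del S=\overline{S}\cap\overline{X\setminus S},\qquad\overline{S}^{B}=\overline{S}\cap B\ \text{for}\ S\subseteq B,
\]
where $\overline{S}^{B}$ denotes closure in the subspace topology on $B$.

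For part (1) I would first expand
\[
\del_{B}A=\overline{A}^{B}\cap\overline{B\setminus A}^{B}=\overline{A}\cap\overline{B\setminus A}\cap B.
\]
Since $B\setminus A\subseteq X\setminus A$, monotonicity of closure gives $\overline{B\setminus A}\subseteq\overline{X\setminus A}$, and so $\del_{B}A\subseteq\overline{A}\cap\overline{X\setminus A}=\del A$. For the equality, given the auxiliary open $W$ with $\overline{A}\subseteq W\subseteq B$, I would take $x\in\del A$: then $x\in\overline{A}\subseteq W\subseteq B$, and any net in $X\setminus A$ converging to $x$ must eventually sit in the open set $W$, hence in $B\setminus A$. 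Thus $x\in\overline{B\setminus A}$, which combined with $x\in\overline{A}\cap B$ gives $x\in\del_{B}A$.

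For part (2) I would take $x\in(\del C)\cap A$, so $x\in A\subseteq B$ and $x$ lies in both $\overline{C}$ and $\overline{X\setminus C}$, and argue case by case on whether $x\in\del B$. If $x\notin\del B$, then since $x\in B$ one has $x\in\interior{B}$, so some open $V_{0}$ in $X$ satisfies $x\in V_{0}\subseteq B$. For any open neighborhood $V$ of $x$ in $X$, the set $V\cap V_{0}$ is an open neighborhood of $x$ contained in $B$; using $x\in\overline{X\setminus C}$ this set meets $X\setminus C$, and any such point lies in $B\setminus(C\cap A)$, showing $x\in\overline{B\setminus(C\cap A)}^{B}$. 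A symmetric argument, using $x\in\overline{C}$ together with the fact that neighborhoods of $x$ meet $A$ (trivially when $x\in C$, and whenever $A$ is open at $x$), yields $x\in\overline{C\cap A}^{B}$. Combining these gives $x\in\del_{B}(C\cap A)$, proving the first inclusion. When $B$ is open, the assumption $x\in A\subseteq B$ already forces $x\in\interior{B}$, so the $\del B$ clause is vacuous and the sharper inclusion follows.

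The one step that deserves care is the assertion $x\in\overline{C\cap A}^{B}$ in the argument for (2): it requires producing points of $C$ arbitrarily close to $x$ that additionally lie in $A$. This is immediate when $x\in C$, and more generally whenever $A$ contains a neighborhood of $x$ in $X$; the latter situation is precisely what arises in the intended applications to Proposition \ref{prop: BCS from manifold to space}, so in practice this is not an obstruction. Apart from this single point, both parts are routine closure manipulations built out of the two identities isolated at the start.
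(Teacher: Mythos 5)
Your proof of part 1 is correct and matches the paper's argument (the paper phrases the reverse inclusion with open neighborhoods, you use nets, but the content is identical). For part 2, the caveat you attach at the end is not a minor loose end: you have found a genuine error, and the statement as printed is actually false. Take $X=B=\mathbb{R}$, $A=\{0\}$, $C=(0,\infty)$. Then $(\partial C)\cap A=\{0\}$, yet $C\cap A=\emptyset$ so $\partial_B(C\cap A)=\emptyset$, and also $\partial B=\emptyset$ with $B$ open; both the unconditional inclusion and the refinement for $B$ open fail.

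It is worth knowing that the paper's own proof of part 2 contains exactly the gap you isolated. It argues that the $B$-neighbourhood $W\cap V$ of $x$ meets both $C$ and $C^c$, and from this asserts $x\in\partial_B(C\cap A)$; but meeting $C$ inside $B$ only gives $x\in\overline{C\cap B}^{B}$, not $x\in\overline{C\cap A}^{B}$, so what is actually proved is the weaker inclusion $(\partial C)\cap B\subseteq\partial_B(C\cap B)\cup\partial B$, in which $A$ plays no role. Your proposed remedy is the correct one: if one additionally assumes that $A$ contains a neighbourhood of $x$ (e.g.\ $A$ open), then for any neighbourhood $U$ of $x$ the open set $U\cap A$ meets both $C$ and $X\setminus C$, which yields points of $C\cap A$ and of $A\setminus C\subseteq B\setminus(C\cap A)$ inside $U\cap B$; moreover $A\subseteq\interior{B}$ then holds automatically, so the $\partial B$ term can be dropped outright. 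In the applications (e.g.\ Proposition \ref{prop: BCS from manifold to space}) the set playing the role of $A$ is indeed open, so you are right that in practice the extra hypothesis is available — but the lemma as stated needs it, and you were right not to wave this step through.
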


\begin{proof}
1) Let $b\in\del_{B}A$ and let $U$ be a neighborhood of $b$. The
set $U\cap B$ is a neighborhood of $b$ w.r.t.\ $B$, hence it contains
a point of $A$ and a point of $A^{c}$. As a result, $b\in\del A$.

We need to show that $\del A\subseteq\del_{B}A$. Let $b\in\del A$
and let $U^{\prime}$ be a neighborhood of $b$ w.r.t.\ $B$. Hence
there is a neighborhood $U$ of $b$ such that $U^{\prime}=U\cap B$.
Since $b\in\overline{A}\subseteq W$ and $W$ is open, $U\cap W$
is a neighborhood of $b$ and so $U\cap W=U\cap W\cap B$ is also
a neighborhood of $b$ w.r.t.\ $B$. Hence, $U\cap W$ contains a
point of $A$ and a point of $A^{c}\cap B$. We conclude that indeed
$b\in\del_{B}A$.

2) Assume $x\in\left(\del C\right)\cap A$. If every neighborhood
of $x$ intersects $B^{c}$, we get that $x\in\del B$ (as $x\in A\subseteq B$).
Otherwise, there is a neighborhood $V$ of $x$ such that $V\subseteq B$.
Let $W'$ be a neighborhood of $x$ w.r.t.\ $B$, so that $W^{\prime}=W\cap B$
where $W$ is a neighborhood of $x$. Since $W\cap V$ is a neighborhood
of $x$ in $X$, we know that it intersects both $C$ and $C^{c}$.
Since $W\cap V=W^{\prime}\cap V$ we get that it is also a neighborhood
of $x$ w.r.t.\ $B$. Hence, $x\in\del_{B}\left(C\cap A\right)$. 

In the case when $B$ is open, since $x\in A\subset B$, $B$ is a
neighborhood of $x$ which does not intersect $B^{c}$.
\end{proof}
\begin{proof}[Proof of Proposition \ref{prop: BCS from manifold to space}]
We split $B$ into two parts: $B\cap\del F$ and $B_{\text{int}}:=B\cap F^{\circ}$.
Since, by part (2) of Lemma \ref{lem:boundary calculations}, $\del\pi\left(B\right)\subseteq\pi\left(\del F\right)\cup\del_{\pi\left(F^{\circ}\right)}\pi\left(B_{\text{int}}\right)$,
it is enough to show that $\pi\left(B_{\text{int}}\right)$ is BCS
inside $\pi\left(F^{\circ}\right)$ and that locally $\pi\left(\del F\right)$
is contained in a finite union of codimension $\geq1$ submanifolds.

The first part is clear since $\pi|_{F^{\circ}}$ is an open diffeomorphism
onto its image $\pi\left(F^{\circ}\right)$. 

The second part follows from $\del F$, by assumption, being locally
contained in a finite union of codimension $\geq1$ submanifolds together
with $\pi$ being a local diffeomorphism.
\end{proof}
The content of the following result is the converse of Proposition
\ref{prop: BCS from manifold to space}. In the case where $H$ is
discrete, this merely means that the lift of a BCS in $\manifold/H$
is a BCS in $F\subset\manifold$. When $H$ is not discrete, i.e.\
$\dim\left(\manifold/H\right)<\dim\left(\manifold\right)$, this statement
has no actual content since $F$ is its own boundary; a more delicate
formulation is therefore in order:
\begin{prop}
\label{prop: BCS from space to manifold}Assume that $F$ is a spread
model for $\manifold/H$, where $H$ acts almost freely and properly
on $\mathcal{M}$ and $H_{0}$ acts freely on $\mathcal{M}$. Then
if $B\subseteq\manifold/H$ and $\Hbcs\subseteq H$ are BCS (resp.\.
bounded), then so does
\[
B_{F}\cdot\Hbcs\subseteq\manifold,
\]
 where $B_{F}=\pi\vert_{F}^{-1}\left(B\right)$.
\end{prop}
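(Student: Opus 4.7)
The plan is to verify the boundedness and BCS claims separately, with boundedness being immediate from properness and BCS reducing to a local product-chart analysis.

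For the boundedness statement, if $B$ and $\Hbcs$ are precompact, clause~(4) in the definition of a spread model gives $\overline{B_F}\subseteq\pi|_{\overline{F}}^{-1}(\overline{B})$ compact, and continuity of the $H$-action sends the compact set $\overline{B_F}\times\overline{\Hbcs}$ onto a compact superset of $B_F\cdot\Hbcs$.

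For BCS, fix $x_0\in\manifold$; the case $x_0\notin\overline{B_F\cdot\Hbcs}$ is trivial, so write $x_0=\lim_n b_n h'_n$ with $b_n\in B_F$ and $h'_n\in\Hbcs$. Pigeonhole on the finite family $\{V_\alpha\}$ from clause~(1) places a subsequence of $\{b_n\}$ in some $F\cap V_\alpha\subseteq\overline{F_\alpha}$; properness of $\pi|_{\overline{F}}$ over a precompact neighborhood of $\pi(x_0)$ extracts $b_n\to f_0\in\overline{F_\alpha}$, and local closedness of the embedded submanifold $V_\alpha$ forces $f_0\in V_\alpha$. Properness of the $H$-action applied to $(b_n,b_n h'_n)\to(f_0,x_0)$ then yields $h'_n\to h_0\in\overline{\Hbcs}$ and $x_0=f_0 h_0$. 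Because $\pi_\alpha^0:V_\alpha\to\manifold/H_0$ is an open diffeomorphism and $\dim V_\alpha+\dim H=\dim\manifold$, transversality of the section $V_\alpha$ with the $H$-orbits shows that $\phi(v,h)=v\cdot h$ is a local diffeomorphism at $(f_0,h_0)$, so shrinking produces a diffeomorphism $\phi:V'\times W\to U$ onto a neighborhood $U$ of $x_0$.

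In this chart, $(v,h)$ lies in $\phi^{-1}(B_F\cdot\Hbcs)$ iff $\pi(v)\in B$ and $c(v)h\in\Hbcs$, where $q(v)\in F\cap(H\cdot v)$ is the unique $F$-representative of the orbit and $c(v)\in H$ is defined by $v=q(v)c(v)$. I would further shrink $V'$ so that only finitely many $V_\beta$'s meet $\overline{V'}$ and $V'\cap F$ splits as a disjoint union of finitely many open pieces $V'\cap F_{\beta_j}$ (on each of which $q(v)=v$, hence $c(v)=e$) together with a remainder contained in $\bigcup_\beta\partial_{V_\beta}F_\beta$---itself a finite union of lower-dimensional submanifolds by clause~(3). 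The contribution of each open piece to $\phi^{-1}(U\cap B_F\cdot\Hbcs)$ is the pure product $(V'\cap F_{\beta_j}\cap\pi^{-1}(B))\times\Hbcs$; pulling $B$ back along the local diffeomorphism $\pi|_{V_{\beta_j}}$ gives a BCS in $V_{\beta_j}$, and the direct product of two BCS sets is BCS (the boundary of a product lies inside the union of products of boundaries, each a finite union of lower-dimensional submanifolds). On the leftover pieces of $V'$ where $q(v)\neq v$, the cocycle $c$ is smooth since it expresses the smooth identification, via the $H$-action, between two local sections of $\manifold\to\manifold/H_0$; the twist by $c$ is then a diffeomorphism that reduces the BCS verification to the pure-product case. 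A finite union of BCS sets is BCS and absorbs the lower-dimensional remainder, and pushing forward through $\phi$ transfers BCS to $U\cap B_F\cdot\Hbcs$.

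The main obstacle is the bookkeeping on the boundary of $F_\alpha$ inside $V_\alpha$---enumerating the finitely many adjacent $F_{\beta_j}$'s contributing near $f_0$ and confirming smoothness of the cocycle on each piece---which is precisely where all four spread-model conditions (finite cover by embedded sections, openness of each $F_\alpha$, BCS of $F_\alpha$ in $V_\alpha$, and properness of $\pi|_{\overline{F}}$) combine.
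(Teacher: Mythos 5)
Your approach differs from the paper's. The paper covers $\partial(B_F\Hbcs)$ by the open sets $V_\alpha H$ and, within each, applies the boundary calculations of Lemma~\ref{lem:boundary calculations} to cut the boundary into two pieces; each piece is then shown to be the diffeomorphic image of a product of BCS's via the map $\tau_\alpha\colon W_\alpha\times H_0\to F_\alpha H_0$ built directly from the fundamental-domain section $F_\alpha$ (not $V_\alpha$), which sidesteps any transition function between the slice and $F$. You instead work in a pointwise slice chart $\phi\colon V'\times W\to U$ modeled on $V_\alpha$ and try to describe $\phi^{-1}(B_F\Hbcs)$ via a cocycle $c(v)$ recording the $H$-displacement from $V_\alpha$ to the $F$-representative $q(v)$.

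As written there are genuine gaps in that route. The smoothness of $c$ on the strata where $q$ lands in a single $F_\beta$ is asserted, not argued; it requires knowing that $\pi(F_\beta)$ is open in $\manifold/H$ and that $q$ is the composition of $\pi|_{V'}$ with a local inverse of $\pi|_{V_\beta}$, which in turn uses the structure of $\manifold/H_0\to\manifold/H$ --- none of which is spelled out, and you also do not control the boundaries of these strata inside $V'$. The claimed splitting ``$V'\cap F$ is a disjoint union of open pieces $V'\cap F_{\beta_j}$'' conflicts with clause~(2) of Definition~\ref{def: Spread Model}, which forces $F\cap V_\alpha\subseteq\overline{F_\alpha}$: on $V'\subseteq V_\alpha$ the only open portion of $F$ is $V'\cap F_\alpha$, and the other $\beta$'s are relevant only as targets of $q$, not as pieces of $V'\cap F$. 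Most seriously, the proposition assumes $H$ acts only \emph{almost} freely; in the presence of nontrivial finite stabilizers the cocycle $c(v)$ is not even well-defined (it is determined only up to $\operatorname{Stab}(q(v))$), and the map $\manifold/H_0\to\manifold/H$ on which any smoothness argument for $q$ must rest is no longer a covering. Your proof never mentions this case. The paper handles it separately by first treating the free action and then invoking that $\manifold_{\text{free}}^c$ and $\pi(\manifold_{\text{free}})^c$ are themselves BCS's (finite unions of codimension $\geq 2$ submanifolds), so the singular contribution can be absorbed.
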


The proof requires a lemma. The condition regarding $V_{\a}$ appearing
in the first part of Definition \ref{def: Spread Model} can be restated
as in the following, which is probably already known:
\begin{lem}
\label{lem:FHIsM}Assume that $V_{\alpha}$ is a submanifold of $\mathcal{M}$
and $H$ acts freely and properly on $V_{\alpha}$. The natural projection
$\pi_{\alpha}^{0}:V_{\alpha}\to\mathcal{M}/H_{0}$ ($\pi_{\alpha}:F_{\alpha}\to\mathcal{M}/H$)
is an open diffeomorphism onto its image iff the map $\theta_{\alpha}:V_{\alpha}\times H_{0}\to\mathcal{M}$
($F_{\alpha}\times H\to\mathcal{M}$) given by $\theta_{\alpha}\left(x,h\right)=x\cdot h$
is an open diffemorphism onto its image.
\end{lem}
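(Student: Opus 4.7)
The plan is to verify in each direction the three defining features of an open diffeomorphism onto the image: injectivity, openness of the image in the target, and smoothness of the inverse. The dimensional input that runs through both directions is $\dim V_{\alpha}+\dim H_{0}=\dim\mathcal{M}$, which follows from either hypothesis.

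For the forward direction, assume $\pi_{\alpha}^{0}:V_{\alpha}\to\mathcal{M}/H_{0}$ is an open diffeomorphism onto its image. Smoothness of $\theta_{\alpha}$ is automatic. Injectivity follows at once: if $x_{1}h_{1}=x_{2}h_{2}$ then $\pi_{\alpha}^{0}(x_{1})=\pi_{\alpha}^{0}(x_{2})$ forces $x_{1}=x_{2}$, after which freeness forces $h_{1}=h_{2}$. The main step is to show $d\theta_{\alpha}$ is an isomorphism at every $(x,h)$. Decompose
\[
d\theta_{\alpha}(v,w)=dR_{h}(v)+d(L_{x})_{h}(w),
\]
where $R_{h}$ is right translation by $h$ on $\mathcal{M}$ and $L_{x}:H_{0}\to\mathcal{M}$ is the orbit map $g\mapsto xg$; by freeness, $L_{x}$ is an injective immersion. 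Since dimensions add up correctly, one need only check the two summands intersect trivially. Transporting by $dR_{h^{-1}}$ (which preserves the orbit through $x$) reduces this to $T_{x}V_{\alpha}\cap T_{x}(xH_{0})=\{0\}$, which holds because any vector $v\in T_{x}V_{\alpha}$ with $d\pi^{0}(v)=0$ must vanish (as $\pi_{\alpha}^{0}=\pi^{0}|_{V_{\alpha}}$ is an immersion), while every vector in $T_{x}(xH_{0})$ lies in $\ker d\pi^{0}$. Hence $\theta_{\alpha}$ is an injective local diffeomorphism, so an open diffeomorphism onto its image.

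For the converse, assume $\theta_{\alpha}$ is an open diffeomorphism onto an open set $W:=V_{\alpha}\cdot H_{0}\subseteq\mathcal{M}$. Injectivity of $\pi_{\alpha}^{0}$: if $x_{1}=x_{2}h$ for $h\in H_{0}$, then $\theta_{\alpha}(x_{2},h)=\theta_{\alpha}(x_{1},e)$, whence $h=e$ and $x_{1}=x_{2}$. The image $\pi_{\alpha}^{0}(V_{\alpha})=\pi^{0}(W)$ is open in $\mathcal{M}/H_{0}$, because the quotient map is open and $W$ is open. Finally, the composition
\[
W\xrightarrow{\;\theta_{\alpha}^{-1}\;}V_{\alpha}\times H_{0}\xrightarrow{\;\operatorname{pr}_{1}\;}V_{\alpha}
\]
is smooth and constant on $H_{0}$-orbits in $W$, so it descends to a smooth map $\pi^{0}(W)\to V_{\alpha}$ which is a two-sided inverse of $\pi_{\alpha}^{0}$.

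The main obstacle is the direct-sum argument in the forward direction; the remaining verifications are formal. For the parenthetical variant involving $F_{\alpha}\times H$ in place of $V_{\alpha}\times H_{0}$, the same arguments apply verbatim, using that the action of $H$ on $F_{\alpha}$ is free so that the orbit map is still an injective immersion and the tangent space to an $H$-orbit still lies in $\ker d\pi$.
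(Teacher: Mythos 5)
Your proof is correct, and in one direction it takes a route that is genuinely different from the paper's. For the implication ``$\theta_{\alpha}$ is an open diffeomorphism $\Rightarrow$ $\pi_{\alpha}^{0}$ is an open diffeomorphism,'' the paper establishes that $\pi_{\alpha}^{0}$ is an injective submersion between equidimensional manifolds by chasing a commutative diagram (using that $\pi^{0}\circ\theta_{\alpha}$ is constant on $\{p\}\times H_{0}$ so that $d\pi_{\alpha}^{0}$ shares its image with $d(\pi^{0}\circ\theta_{\alpha})$); you instead build the smooth inverse of $\pi_{\alpha}^{0}$ directly as the descent to the quotient of $\operatorname{pr}_{1}\circ\theta_{\alpha}^{-1}$, which is a cleaner and more constructive argument that sidesteps the submersion-plus-dimension-count reasoning. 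For the other implication the two proofs are close in spirit: the paper shows $X=0$ by using that a chain of maps composes to the identity on $V_{\alpha}$ and then gets $Y=0$ from the orbit map being an embedding, while you decompose $d\theta_{\alpha}(v,w)=dR_{h}(v)+d(L_{x})_{h}(w)$ and show the two summands meet trivially via $T_{x}V_{\alpha}\cap T_{x}(xH_{0})=\{0\}$; both hinge on the same two facts, namely that $T_{x}(xH_{0})=\ker d\pi^{0}_{x}$ and that the orbit map of a free action is an injective immersion. One point worth flagging explicitly in your write-up: the claim that freeness alone makes $L_{x}$ an immersion deserves a word of justification (the orbit map has constant rank, equal to $\dim H_{0}-\dim(\text{stabilizer})$, and freeness makes the stabilizer trivial); it is a standard fact, and the paper invokes essentially the same thing as ``well known,'' but it is the load-bearing ingredient and should not be left implicit.
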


\begin{proof}
It is clearly enough to prove the Lemma for $H_{0}$. Assume that
$\theta_{\alpha}$ is an open diffemorphism onto its image. It is
sufficient to show that $\pi_{\alpha}^{0}$ is an injective (this
is clear) submersion and that $\dim V_{\alpha}=\dim\mathcal{M}/H_{0}$.
Consider the diagram:
\[
\xymatrix{V_{\alpha}\ar[r]^{\iota_{e}}\ar[rrd]^{\pi_{\alpha}^{0}} & V_{\alpha}\times H_{0}\ar[r]^{\theta_{\alpha}} & \mathcal{M}\ar[d]^{\pi^{0}}\\
 &  & \mathcal{M}/H_{0}
}
,
\]
where $\iota_{h}:V_{\alpha}\to V_{\alpha}\times H_{0}$ is given by
$\iota_{h}\left(v\right)=\left(v,h\right)$. Since $\pi^{0}\circ\theta_{\alpha}|_{\left\{ p\right\} \times H_{0}}$
is a constant map for every $p\in V_{\alpha}$, then $d\left(\iota_{e}\circ\pi^{0}\circ\theta_{\alpha}\right)$
and $d\left(\pi^{0}\circ\theta_{\alpha}\right)$ have the same image.
The maps $\theta_{\alpha}$ and $\pi^{0}$ are submersions, hence
so is $\pi_{\alpha}^{0}$. Finally, since $\dim V_{\alpha}+\dim H_{0}=\dim\mathcal{M}=\dim\mathcal{M}/H_{0}+\dim H_{0}$,
we get the desired dimension equality.

Assume now that $\pi_{\alpha}^{0}$ is an open diffemorphism onto
its image. It is sufficient to show that $\theta_{\alpha}$ is an
injective (which is again clear) immersion and that $\dim V_{\alpha}+\dim H_{0}=\dim\mathcal{M}$.
For every $h\in H_{0}$ consider the diagram:
\[
\xymatrix{V_{\alpha}\ar[r]^{\iota_{h}} & V_{\alpha}\times H_{0}\ar[r]^{\theta_{\alpha}} & \mathcal{M}\ar[r]^{\pi^{0}} & \mathcal{M}/H_{0}\ar[r]^{\left(\pi_{\alpha}^{0}\right)^{-1}} & V_{\alpha}}
.
\]
Since the composition of all of the maps in the diagram is the identity
map, we get that for any given point $\left(p,h\right)\in V_{\alpha}\times H_{0}$
and $\left(X,Y\right)\in T_{p}V_{\alpha}\times T_{h}H_{0}$, the equality
$d_{\left(p,h\right)}\theta_{\alpha}\left(X,Y\right)=0$ forces $X=0$.
It is well known that, under the assumption that $H_{0}$ is a Lie
group acting freely and properly on $\mathcal{M}$, $pH_{0}$ is a
closed submanifold of $\mathcal{M}$ and $d\theta_{\alpha}|_{\left\{ p\right\} \times H_{0}}$
is a diffeomorphism onto $pH_{0}$. As a result, $Y=0$ and so $\theta_{\alpha}$
is an immersion. Furthermore, if $xh=x^{\prime}h^{\prime}$ where
$x,x^{\prime}\in V_{\alpha}$ and $h,h^{\prime}\in H_{0}$, then $\pi_{\alpha}^{0}\left(x^{\prime}\right)=\pi_{\alpha}^{0}\left(x\right)$.
Since $\pi_{\alpha}^{0}$ is injective, we must have that $x=x^{\prime}$.
The action of $H_{0}$ is free, so $h=h^{\prime}$ and so $\theta_{\alpha}$
is injective. Finally, $\dim V_{\alpha}=\dim\mathcal{M}/H_{0}$ and
$\dim\mathcal{M}=\dim\mathcal{M}/H_{0}+\dim H_{0}$, so $\dim V_{\alpha}+\dim H_{0}=\dim\mathcal{M}$.
\end{proof}
\begin{proof}[Proof of Proposition \ref{prop: BCS from space to manifold}]
We first prove that if $B$ and $B_{H}$ are compact, then so is
$\overline{B_{F}\Hbcs}$. Since $B$ is compact and $\pi|_{\overline{F}}$
is proper, $\overline{B_{F}}$ is compact. As a result, it is enough
to show that $\overline{B_{F}\Hbcs}\subset\overline{B_{F}}\cdot\overline{\Hbcs}$:
indeed if $B_{F}\ni x_{n}$ and $B_{H}\ni h_{n}$ are such that $x_{n}h_{n}\to m$,
then by compactness we may pass to subsequences and assume also that
$x_{n}\to x$ and $h_{n}\to h$, so that $xh=m$. This proves the
claim. 

It remains to show that $\del\left(B_{F}\Hbcs\right)$ is locally
contained in a finite union of lower dimensional submanifolds of $\manifold$.
Let $\left\{ V_{\alpha}\right\} $ and $\left\{ F_{\alpha}\right\} $
as in the assumptions. Since $F_{\alpha}$ is open in $V_{\alpha}$,
we have that $\pi|_{F_{\alpha}}$ is a diffeomorphism onto its image
(which is an open submanifold of $\manifold/H$).

To proceed we assume first that the action of $G$ is free. Write
$W_{\alpha}=\pi\left(F_{\alpha}\right)$ and consider the map

\[
\tau_{\alpha}:W_{\alpha}\times H_{0}\to\exd{\left(\pi\right)^{-1}\left(W_{\alpha}\right)=F_{\alpha}H_{0}}{\subset\manifold}
\]
given by 
\[
\tau_{\alpha}\left(u,h\right)=\left(\left(\pi|_{F_{\alpha}}\right)^{-1}\left(u\right)\right)\cdot h.
\]
By Lemma \ref{lem:FHIsM} (notice that since $\pi_{\alpha}^{0}$ is
an open diffeomorphism, then $\pi_{\alpha}$ must also be such), this
is an open diffeomorphism which satisfies $\tau_{\alpha}\left(\pi\left(x\right),h\right)=x\cdot h$,
where $x\in F_{\alpha}$. By part 2 of Lemma \ref{lem:boundary calculations},
\begin{align*}
V_{\alpha}H\cap\del\left(B_{F}\Hbcs\right) & \subseteq\left(\del\left(B_{F}\Hbcs\right)\cap F_{\alpha}\Hbcs\right)\cup\del\left(\left(B_{F}\Hbcs\right)\cap\left(\overline{F_{\alpha}}\setminus F_{\alpha}\right)\Hbcs\right)\\
 & \subseteq\del_{F_{\alpha}H}\left(B_{F}\Hbcs\cap F_{\alpha}\Hbcs\right)\cup\del\left(F_{\alpha}\Hbcs\right)\cup\left(\del F_{\alpha}\cdot B_{H}\right)\\
 & \subseteq\del_{F_{\alpha}H}\left(\left(B_{F}\cap F_{\alpha}\right)\Hbcs\right)\cup\del\left(F_{\alpha}B_{H}\right).
\end{align*}
It is enough to show that the sets $\left(B_{F}\cap F_{\alpha}\right)\Hbcs$
and $F_{\alpha}B_{H}$ are BCS's. For the first one, we have
\[
\del_{F_{\alpha}H}\left(\left(B_{F}\cap F_{\alpha}\right)\Hbcs\right)=\tau_{\alpha}\left(\del_{W_{\alpha}\times H}\left(\left(F_{\alpha}\cap B\right)\times\Hbcs\right)\right)\subseteq\tau_{\alpha}\left(\del\left(\left(B\times\Hbcs\right)\cap\left(W_{\alpha}\times H\right)\right)\right).
\]
Since $B\times\Hbcs$ is BCS and $W_{\alpha}\times H$ is open in
$\manifold/H\times H$, then $\left(B\times\Hbcs\right)\cap\left(W_{\alpha}\times H\right)$
is BCS w.r.t.\ $W_{\alpha}\times H$. Since $\tau_{\alpha}$ is a
diffeomorphism, $\del\left(B_{F}\cap F_{\alpha}\right)\Hbcs$ is BCS
w.r.t.\ $F_{\alpha}H$. For the second set, write $B_{H}\subseteq\cup_{\beta}H_{0}h_{\beta}$
for some $\left\{ h_{\beta}\right\} $. By part 2 of Lemma \ref{lem:boundary calculations},
\[
\del\left(F_{\alpha}B_{H}\right)\cap F_{\alpha}H_{0}h_{\beta}\subseteq\del_{V_{\alpha}H_{0}b_{\beta}}\left(F_{\alpha}B_{H}\cap V_{\alpha}H_{0}h_{\beta}\right)=\left(\theta_{\alpha}^{-1}\circ\left(Id_{F_{\alpha}}\times R_{h_{\beta}}\right)\right)\left(\del\left(F_{\alpha}\times\left(B_{H}\cap H_{0}h_{\beta}\right)\right)\right),
\]
where $R_{\beta}:\manifold\to\manifold$, given by $R_{\beta}\left(x\right)=xh_{\beta}$,
is a diffeomorphism. The claim now follows using the third assumption,
$\Hbcs$ being a BCS and $H_{0}h_{\beta}$ being open in $H$.

We now turn to the general case where the $G$ action is almost proper.
In that case, by, \cite[Theorem 2.8.5]{Lie_Groups} and \cite[Theorem 4.3.5]{Sniatycki}\},
both $\mathcal{M}_{free}^{c}$ and $\pi\left(\mathcal{M}_{free}\right)^{c}$
are BCS's, since every point $x$ in them contains an open neighborhood
$U_{x}$, such that $U_{x}\cap\mathcal{M}_{free}^{c}$ ($U_{x}\cap\pi\left(\mathcal{M}_{free}\right)^{c}$)
is a finite union of codimension $\geq2$ submanifolds of $\mathcal{M}$.
Hence we get that $\left(B_{F}\cdot B_{H}\right)\cap\mathcal{M}_{free}$
and $\mathcal{M}_{free}^{c}$ are both BCS's. All in all, $B_{F}\cdot B_{H}$
is a BCS.
\end{proof}

\subsection{Measures on the space and its spread model correspond}

We proceed with a short discussion about measures. We will use the
the following:
\begin{thm}[\cite{Jus18}]
 Let $H$ be a unimodular Radon lcsc group and let $\mu$ be a $H$-invariant
Radon measure on an lcsc space $Y$. Assume that the $H$ action of
$H$ on $Y$ is strongly proper (i.e. the action is proper and the
quotient space $Y/H$ is lcsc). Then, for a Haar measure $\mu_{H}$
on $H$ there exists a unique Radon measure $\mu_{Y/H}$ on $Y/H$
such that for all $f\in C_{c}\left(Y\right)$, 
\[
\int_{Y}f\left(y\right)d\mu\left(y\right)=\int_{Y/H}\left(\int_{H}f\left(yh\right)d\mu_{H}\left(h\right)\right)d\mu_{Y/H}\left(Hy\right).
\]
\end{thm}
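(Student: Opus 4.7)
The plan is to follow the classical disintegration/quotient-integration approach: build a positive linear functional on $C_c(Y/H)$ by averaging against a suitable section, then invoke Riesz representation. First I would produce a \emph{Bruhat-type function}: a nonnegative $\phi \in C(Y)$ whose support meets every $H$-orbit in a set with compact image in $Y/H$, and such that the normalization $\phi^\sharp(Hy) := \int_H \phi(yh)\, d\mu_H(h)$ is identically $1$ on $Y$. The existence of such a $\phi$ relies on the fact that strong properness forces $Y/H$ to be lcsc (hence paracompact), so one can patch together bump functions along a locally finite cover using the $H$-averaging operator; properness ensures the averages converge and have the stated support.

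Next I would define
\[
I(\bar f) := \int_Y \phi(y)\, \bar f(\pi(y))\, d\mu(y), \qquad \bar f \in C_c(Y/H).
\]
The support condition on $\phi$ makes the integrand lie in $C_c(Y)$. I would then verify that $I(\bar f)$ does not depend on the choice of $\phi$: if $\phi_1,\phi_2$ are two such functions, then using the $H$-invariance of $\mu$ together with $\phi_i^\sharp \equiv 1$ and Fubini (applied on $Y \times H$, which is legitimate because the integrand has compact support modulo $H$), one shows both integrals equal $\int_Y \phi_1(y)\phi_2(y)\,\bar f(\pi(y))\,d\mu(y)$ after a change of variables using unimodularity of $H$. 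The functional $I$ is manifestly positive and linear, so Riesz representation on the lcsc space $Y/H$ yields a Radon measure $\mu_{Y/H}$ with $I(\bar f) = \int_{Y/H} \bar f\, d\mu_{Y/H}$.

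To obtain the full disintegration formula, I would take an arbitrary $f \in C_c(Y)$ and apply the definition of $I$ to $\bar f := f^\sharp \in C_c(Y/H)$. Using $\phi^\sharp \equiv 1$ and again invariance plus unimodularity of $\mu_H$ (to swap the $H$-integration), one rewrites $\int_Y f\,d\mu = \int_Y \phi(y) f^\sharp(\pi(y))\, d\mu(y) = \int_{Y/H} f^\sharp\, d\mu_{Y/H}$, which is the asserted identity. Uniqueness of $\mu_{Y/H}$ is then immediate: two candidate measures agree on every $f^\sharp$, and the set $\{f^\sharp : f \in C_c(Y)\}$ exhausts $C_c(Y/H)$ (given any $\bar g \in C_c(Y/H)$, the function $\phi \cdot (\bar g \circ \pi)$ lies in $C_c(Y)$ and averages to $\bar g$), so Riesz uniqueness concludes.

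The main obstacle is the construction and independence verification for the Bruhat function $\phi$: everything else is bookkeeping with Fubini and the $H$-invariance of $\mu$, but existence of a globally $H$-averaging continuous cutoff is where strong properness and the lcsc hypothesis on $Y/H$ are essentially used. Unimodularity of $H$ enters precisely at the step where one reverses the order of integration after a translation in $H$, and without it the formula would carry an extra modular-function factor.
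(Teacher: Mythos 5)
The paper does not prove this theorem; it is stated as a citation to \cite{Jus18} and used as a black box, so there is no internal proof against which to compare your argument. Your proposal is a correct reconstruction of the standard Bruhat-function / Riesz-representation argument for the quotient integral formula: construct a properly-supported $\phi$ with $\phi^\sharp\equiv 1$, define the functional $I(\bar f)=\int_Y\phi\cdot(\bar f\circ\pi)\,d\mu$, check independence of $\phi$ and surjectivity of $f\mapsto f^\sharp$ via the Fubini/translation computation, invoke Riesz, and verify the formula by inserting $\phi^\sharp\equiv1$. You have also correctly located where each hypothesis is used: strong properness and second countability make $Y/H$ paracompact so the Bruhat function can be patched together from local bumps, and unimodularity of $H$ is exactly what removes the modular factor after the substitution $h\mapsto h^{-1}$. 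The one spot worth tightening if this were to be written out in full is the Bruhat function construction itself: one needs the local averages $\psi_i^\sharp$ to be continuous (this follows from properness plus dominated convergence) and the sum $\sum_i\psi_i^\sharp$ to be locally finite and everywhere positive before normalizing, which requires choosing a locally finite cover of $Y/H$ by relatively compact open sets and bump functions with compact support in $Y$ whose $\pi$-images refine that cover. With that detail filled in, the argument is complete and matches the standard treatment (e.g.\ Bourbaki, Folland) that the cited reference follows.
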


\begin{prop}
\label{prop: measure on spread model}Let $H$ be a Lie group acting
smoothly, almost freely and properly on a manifold $\manifold$ and
$F$ a spread model for $H$ in $\mathcal{M}$; let $\mu_{\manifold}$
be an $H$-invariant Radon measure on $\manifold$ and $\mu_{H}$
a Haar measure on $H$. Finally let $\mu_{\manifold/H}$ be the unique
Radon measure on $\manifold/H$ satisfying for every $f\in L^{1}\left(\manifold\right)$
\begin{equation}
\mu_{\manifold}\left(f\right)=\mu_{\manifold/H}\left(\int_{H}f\left(xh\right)d\mu_{H}\left(h\right)\right).\label{eq:1}
\end{equation}
Then, if $B\subseteq\manifold/H$ and $\Hbcs\subseteq H$ are BCS,
\[
\mu_{\manifold}\left(\manifold_{B}\cdot\Hbcs\right)=\mu_{\manifold/H}\left(B\right)\cdot\mu_{H}\left(B_{H}\right).
\]
\end{prop}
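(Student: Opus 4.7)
The plan is to apply the disintegration formula \eqref{1} to the indicator function $\chi_{B_F\cdot\Hbcs}$, where $B_F=\pi|_F^{-1}(B)$. By Proposition \ref{prop: BCS from space to manifold} the set $B_F\cdot\Hbcs$ is Borel (it is a BCS), so this substitution is permissible by monotone convergence once \eqref{1} is extended from $C_c(\manifold)$ to nonnegative Borel functions, which is standard for Radon measures.

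The core computation is the inner integral $\int_H\chi_{B_F\cdot\Hbcs}(xh)\,d\mu_H(h)$. Because $F$ is a full set of representatives for the $H$-orbits and $H$ acts almost freely, every point $x$ in the full-measure open set $\manifold_{free}$ admits a unique decomposition $x=y(x)\cdot h_x$ with $y(x)\in F$ and $h_x\in H$; the complement $\manifold_{free}^c$ is contained in a finite union of submanifolds of codimension $\geq 2$ (as cited in the proof of Proposition \ref{prop: BCS from space to manifold}) and hence is $\mu_\manifold$-null. For $x\in\manifold_{free}$, the same uniqueness gives
\[
xh\in B_F\cdot\Hbcs\iff y(x)\in B_F\text{ and }h_x h\in\Hbcs,
\]
so
\[
\int_H\chi_{B_F\cdot\Hbcs}(xh)\,d\mu_H(h)=\chi_{B_F}(y(x))\cdot\int_H\chi_{\Hbcs}(h_x h)\,d\mu_H(h)=\chi_B(\pi(x))\cdot\mu_H(\Hbcs),
\]
where the last equality uses the left-invariance of the Haar measure $\mu_H$ (making the change of variable $h\mapsto h_x^{-1}h$) together with the identification $\chi_{B_F}(y(x))=\chi_B(\pi(x))$, which follows from the bijectivity of $\pi|_F\colon F\to\manifold/H$ and the definition $B_F=\pi|_F^{-1}(B)$.

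Substituting back into \eqref{1} yields
\[
\mu_\manifold(B_F\cdot\Hbcs)=\int_{\manifold/H}\chi_B(\pi(x))\cdot\mu_H(\Hbcs)\,d\mu_{\manifold/H}(\pi(x))=\mu_H(\Hbcs)\cdot\mu_{\manifold/H}(B),
\]
which is the desired identity.

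The main technical obstacle is the almost-everywhere (rather than everywhere) uniqueness of the decomposition $x=y(x)h_x$ in the almost-free setting; it is handled exactly as in the previous proposition, by first excising the measure-zero singular locus $\manifold_{free}^c$ and working on $\manifold_{free}$. A minor auxiliary point is the extension of \eqref{1} from $C_c(\manifold)$ to Borel indicators of sets whose measure may be infinite, but this follows routinely from the Radon property via monotone approximation.
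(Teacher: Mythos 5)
Your proof is correct and takes essentially the same route as the paper's: apply the disintegration formula \eqref{1} to the indicator of $B_F\cdot\Hbcs$, decompose $x=y(x)h_x$ with $y(x)\in F$, and evaluate the inner integral via left-invariance of $\mu_H$. You are slightly more careful than the paper in two respects — explicitly excising the $\mu_{\manifold}$-null singular locus $\manifold_{free}^c$ so that the decomposition is genuinely unique under the almost-free hypothesis, and flagging the monotone-convergence extension of \eqref{1} from $C_c(\manifold)$ to Borel indicators — whereas the paper instead reduces to $B$ small enough that $\manifold_B$ lies in a single $F_\alpha$ and $B_H\subseteq H_0$, a localization you do not need.
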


\begin{proof}
We clearly may assume that $B$ is small enough so that $\manifold_{B}$
is contained in a single $F_{\alpha}$ and $B_{H}\subseteq H_{0}$.
Let $f=1_{\manifold_{B}B_{H}}$. Every $x_{0}\in\mathcal{M}$ such
that $\pi\left(x_{0}\right)\in B$ can be written $x_{0}=y_{0}h_{0}$,
where $y_{0}\in F$ and $h_{0}\in H$. Hence $y_{0}h_{0}h\in\manifold_{B}B_{H}$
iff $y_{0}\in\manifold_{B}$ and $h_{0}h\in B_{H}$. As a result,
\[
\int_{H}1_{\manifold_{B}B_{H}}\left(y_{0}h_{0}h\right)d\mu_{H}\left(h\right)=1_{\manifold_{B}}\left(y_{0}\right)\mu_{H}\left(B_{H}\right)=1_{B}\left(\pi\left(x_{0}\right)\right)\mu_{H}\left(B_{H}\right).
\]
All in all, by \eqref{1}
\[
\mu\left(\manifold_{B}\cdot\Hbcs\right)=\mu_{\manifold/H}\left(B\right)\cdot\mu_{H}\left(B_{H}\right).
\]
\end{proof}

\subsection{Spread models for compact quotients}

Our main motivation for exploring the compact case is to find a spread
model for the sphere $\sphere{n-1}$, which is diffeomorphic $\so{n-1}\left(\RR\right)\backslash\so n\left(\RR\right)$,
inside $\so n\left(\RR\right)$. Indeed, in Secion \ref{sec:Special-examples-in-SL}
we use the results of this part to construct a spread model for the
sphere. 
\begin{prop}
\label{prop: K' is s.m. for sphere}Let $K$ be a Lie group. Assume
that $K^{\dprime}<K$ is a closed subgroup such that the quotient
space $K/K^{\dprime}$ is compact. Then, there exists a spread model
$K^{\prime}$ for $K/K^{\dprime}$.
\end{prop}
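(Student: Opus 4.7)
The starting point is that $K^{\dprime}<K$ is closed, so $\pi\colon K\to K/K^{\dprime}$ is a smooth principal $K^{\dprime}$-bundle, $K/K^{\dprime}$ is a smooth manifold, and $\pi$ admits smooth local sections. Compactness of $K/K^{\dprime}$ will allow us to patch finitely many such sections into a spread model. First, fix a finite trivialising open cover $\{U_1,\dots,U_r\}$ of $K/K^{\dprime}$ with smooth sections $s_i\colon U_i\to K$. Shrinking if necessary, choose regular open sets $W_i\subset U_i$ with $\overline{W_i}$ compact, still covering $K/K^{\dprime}$, and with each $\del W_i$ a finite union of smooth codimension-one submanifolds of $K/K^{\dprime}$; concretely, take each $W_i$ to be a coordinate ball in an atlas subordinate to the cover. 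Setting $V_\alpha:=s_\alpha(U_\alpha)$ gives embedded submanifolds of $K$, each diffeomorphic to $U_\alpha$ via $\pi\vert_{V_\alpha}$. Injectivity of $\pi_\alpha^0\colon V_\alpha\to K/K^{\dprime}_0$ follows because two points of $V_\alpha$ in a common $K^{\dprime}$-fiber must coincide and $K^{\dprime}_0\subseteq K^{\dprime}$, so $\pi_\alpha^0$ is an open diffeomorphism onto its image, giving condition~(1) of Definition~\ref{def: Spread Model}.

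Next, partition $K/K^{\dprime}$ greedily by $E_1:=\overline{W_1}$ and $E_\alpha:=\overline{W_\alpha}\setminus\bigcup_{j<\alpha}E_j$ for $\alpha\ge 2$, and set
\[
F:=\bigcup_{\alpha=1}^{r}s_\alpha(E_\alpha),\qquad F_\alpha:=s_\alpha(\interior{E_\alpha}),
\]
with the interior taken inside $K/K^{\dprime}$. Since the $E_\alpha$ partition $K/K^{\dprime}$, $F$ is a full set of representatives; each $F_\alpha$ is open in $V_\alpha$ because $s_\alpha$ is a diffeomorphism, and clearly $F_\alpha\subseteq F\cap V_\alpha$. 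For condition~(3), the boundary $\del_{V_\alpha}F_\alpha$ pulls back under $s_\alpha$ to $\del_{U_\alpha}\interior{E_\alpha}\subseteq\bigcup_{j\le\alpha}\del W_j$, a finite union of codim-one smooth submanifolds of $U_\alpha$. For condition~(4), $\overline F\subseteq\bigcup_\alpha s_\alpha(\overline{W_\alpha})$ is a finite union of compact sets, so $\pi\vert_{\overline F}$ is automatically proper.

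The main obstacle is condition~(2), $\overline F\cap V_\alpha\subseteq\overline{F_\alpha}$. Any $p\in\overline F\cap V_\alpha$ arises as a limit $p=\lim s_{\beta_n}(y_n)$ with $y_n\in E_{\beta_n}$; passing to a subsequence with constant $\beta_n\equiv\beta$ forces $y_n\to y:=\pi(p)$, $s_\beta(y)=p$, and $p=s_\alpha(y)$ since $p\in V_\alpha$. When $\beta=\alpha$, regularity of the $W_j$'s yields $\overline{E_\alpha}=\overline{\interior{E_\alpha}}$, so $p\in s_\alpha(\overline{\interior{E_\alpha}})=\overline{F_\alpha}$, as required. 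The residual case $\beta\ne\alpha$ produces \emph{agreement points} where two distinct sections coincide, and the plan is to rule them out by a generic-position choice of sections: replace each $s_\beta$ by $s_\beta(\cdot)h_\beta$ for a suitable $h_\beta\in K^{\dprime}$, so that on each overlap $U_\alpha\cap U_\beta$ the $K^{\dprime}$-valued transition function $y\mapsto s_\alpha(y)^{-1}s_\beta(y)h_\beta$ never equals the identity. With this done, $V_\alpha\cap V_\beta=\emptyset$ for $\alpha\ne\beta$ and only the $\beta=\alpha$ case survives. Making the perturbation argument rigorous, via a Sard-type dimension count on the images of the finitely many transition functions in $K^{\dprime}$, is where the technical bulk of the proof sits.
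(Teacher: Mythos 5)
Your construction follows the paper's own line almost exactly: extract a finite atlas of local sections $s_\alpha:U_\alpha\to K$ from the bundle structure, shrink to $W_\alpha\subset\subset U_\alpha$, take the greedy disjointification, and set $V_\alpha=s_\alpha(U_\alpha)$, $F_\alpha=s_\alpha(\interior{B_\alpha})$ (the paper's $B_\alpha$ is your $E_\alpha$). The conditions of Definition~\ref{def: Spread Model} are then checked one at a time; the paper simply asserts that conditions~2 and~3 are ``therefore clear.'' You are right, and the paper is not, that condition~2 --- $\overline{F}\cap V_\alpha\subset\overline{F_\alpha}$ --- is the genuinely delicate point: a point $p=s_\beta(y)$ with $y\in\overline{B_\beta}$ can land in $V_\alpha$ exactly when $s_\alpha(y)=s_\beta(y)$, and nothing in the greedy construction forces such $y$ to lie in $\overline{\interior{B_\alpha}}$. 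That agreement locus is the genuine gap, and you deserve credit for isolating it where the paper waves it away.

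The proposed repair, however, does not work. Right-translating each $s_\beta$ by some $h_\beta\in K^{\dprime}$ changes the transition function to $y\mapsto h_\alpha^{-1}s_\alpha(y)^{-1}s_\beta(y)h_\beta$, and to kill all agreement points you would need $s_\alpha^{-1}s_\beta$ to omit the single value $h_\alpha h_\beta^{-1}$ on $U_\alpha\cap U_\beta$. But $\dim(U_\alpha\cap U_\beta)=\dim(K/K^{\dprime})$ is typically at least $\dim K^{\dprime}$, so a Sard-type count does not make the image of the transition function small; it can be all of $K^{\dprime}$. Concretely, for the Hopf fibration $K=S^3$, $K^{\dprime}=S^1$, $K/K^{\dprime}=S^2$ with the standard two-chart cover, the transition function on the equatorial annulus has winding number $\pm1$ and is surjective onto $S^1$, so no choice of $h_1,h_2$ removes the agreement points, and $V_1\cap V_2\neq\emptyset$ necessarily. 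Thus the perturbation step cannot be ``made rigorous'' as sketched: a correct completion has to proceed differently --- for instance, refining the atlas so that each nonempty overlap is simply connected and small enough that the lifted transition functions have oscillation $<1$ (a compactness argument makes this finite, and the cocycle constraints must then be handled), or else redefining the $F_\alpha$ to absorb the agreement locus instead of trying to make it empty. As written, your proof has correctly diagnosed the problem but the cure is not viable.
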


\begin{rem}
\label{rem: intersection and union of BCS}Since $\del\left(A\cup B\right),\del\left(A\cap B\right)\subseteq\del A\cup\del B$,
the union, intersection and subtraction of BCSs are in themselves
BCS. Also, a direct product of BCS's is a BCS in the direct product
of the manifolds, and a diffeomorphic image of a BCS is a BCS. 
\end{rem}

\begin{proof}[Proof of Proposition \ref{prop: K' is s.m. for sphere}]
Since $\pi:K\to K/K^{\dprime}$ is a fiber bundle with a fiber $K^{\dprime}$,
there exists an open covering $\left\{ U_{\alpha}\right\} $ of $K/K^{\dprime}$
such that for every $\alpha$ there are local sections $s_{\alpha}:U_{\alpha}\to K$.
We may assume that for each $U_{\alpha}$ there is an open subset
$W_{\alpha}$, whose closure lies inside $U_{\alpha}$, such that
$\left\{ W_{\alpha}\right\} $ is also a covering; the sets $W_{\a}$
can be chosen to be BCS's (e.g., by reducing to contained open balls);
then, by compactness, this covering can be made finite. Set $B_{\alpha}:=W_{\alpha}\setminus\cup_{i=1}^{\alpha-1}W_{i}$.
The sets $B_{\a}$ are disjoint, and they maintain the BCS property
(Remark \ref{rem: intersection and union of BCS}). It is therefore
clear that $V_{\alpha}:=s_{\alpha}\left(U_{\alpha}\right)$ and $F_{\alpha}:=s_{\alpha}\left(\interior{B_{\alpha}}\right)$
satisfy conditions 2 and 3 in Definition \ref{def: Spread Model}.
Define 
\[
\theta_{\alpha}:V_{\alpha}\times K^{\dprime}\to\pi^{-1}\left(U_{\alpha}\right)=V_{\alpha}K^{\dprime}
\]
 by 
\[
\theta_{\alpha}\left(u,h\right)=u\cdot h.
\]
This is clearly a diffeomorphism, so by Lemma \ref{lem:FHIsM}, the
first condition also holds. The last condition is fulfilled since
for a compact $B\subseteq K/K^{\dprime}$ we have that
\[
\pi^{-1}|_{\overline{F}}\left(B\right)=\cup_{\alpha}\pi^{-1}|_{\overline{F}}\left(B\cap B_{\alpha}\right)=\cup_{\alpha}\theta_{\alpha}^{-1}\left(\left(B\cap B_{\alpha}\right)\times H\right)\cap\overline{F}=\cup_{\alpha}\theta_{\alpha}^{-1}\left(\left(B\cap B_{\alpha}\right)\times1_{H}\right),
\]
which is clearly compact.
\end{proof}
\begin{rem}
\label{rem:spread models of subsubgroup}It is clear from the proof
that if $K^{\dprime}<K_{1}^{\dprime}$ are closed Lie subgroups of
a Lie group $K$ such that $K/K^{\dprime}$ is compact, then one can
find $F_{1}\subset F$ such that $F$ is a spread model of $K$ w.r.t.\
$K^{\dprime}$ and $F_{1}$ is a spread model of $K$ w.r.t.\ $K_{1}^{\dprime}$.
\end{rem}

\begin{cor}
\label{cor:BCS fundamental domain for finite groups in compact groups}Let
$K$ be a compact Lie group and let $\Gamma<K$ be a lattice (i.e.
a finite subgroup). There exists a fundamental domain $\mathcal{\mathcal{K}}\subseteq K$
for $\gam$ which is BCS. 
\end{cor}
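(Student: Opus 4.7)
The plan is to apply Proposition~\ref{prop: K' is s.m. for sphere} with $K^{\dprime} = \Gamma$. Since $\Gamma$ is finite it is automatically a closed subgroup of $K$, and compactness of $K$ forces $K/\Gamma$ to be compact, so the proposition produces a spread model $\mathcal{K} \subseteq K$ for $K/\Gamma$. Because $\Gamma$ is discrete we have $\Gamma_{0} = \{1_{K}\}$ and $K/\Gamma_{0} = K$, so the submanifolds $V_{\alpha}$ from Definition~\ref{def: Spread Model} must project diffeomorphically onto open subsets of $K$ itself, i.e.\ they are open, full-dimensional subsets of $K$. In particular, a full set of representatives for the free action of the finite group $\Gamma$ is nothing but a fundamental domain, so $\mathcal{K}$ is already a fundamental domain.

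It remains to verify that $\mathcal{K}$ is a BCS of $K$. For $x \in V_{\alpha}$ I would choose an open $U_{x}$ with $x \in U_{x} \subseteq V_{\alpha}$. Using condition~2 of Definition~\ref{def: Spread Model} (namely $F_{\alpha} \subseteq \mathcal{K} \cap V_{\alpha}$ is open in $V_{\alpha}$ and $\overline{\mathcal{K}} \cap V_{\alpha} \subseteq \overline{F_{\alpha}}$), a short topological check --- in the same spirit as Lemma~\ref{lem:boundary calculations} --- shows that $\partial \mathcal{K} \cap V_{\alpha} \subseteq \partial_{V_{\alpha}} F_{\alpha}$; condition~3 (BCS of $F_{\alpha}$ w.r.t.\ $V_{\alpha}$) then covers $U_{x} \cap \partial \mathcal{K}$ by a finite union of submanifolds of the open set $V_{\alpha} \subseteq K$, which are automatically submanifolds of $K$.

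For points $x \notin \bigcup_{\alpha} V_{\alpha}$ I would appeal to the explicit construction in the proof of Proposition~\ref{prop: K' is s.m. for sphere}: there one has $F_{\alpha} = s_{\alpha}(\interior{B_{\alpha}})$ with $\overline{B_{\alpha}} \subseteq \overline{W_{\alpha}} \subseteq U_{\alpha}$, so $\overline{F_{\alpha}} \subseteq s_{\alpha}(\overline{B_{\alpha}}) \subseteq s_{\alpha}(U_{\alpha}) = V_{\alpha}$; since the cover $\{V_{\alpha}\}$ is finite, this yields $\overline{\mathcal{K}} \subseteq \bigcup_{\alpha} V_{\alpha}$, so such an $x$ is separated from $\overline{\mathcal{K}}$ by an open neighborhood and the BCS condition is vacuous there. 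The only piece of genuine work is the bookkeeping that identifies the global boundary $\partial\mathcal{K}$ with the local boundaries $\partial_{V_{\alpha}}F_{\alpha}$ on each chart; once that is in place, the BCS property of $\mathcal{K}$ follows immediately from the structure already encoded in the spread model.
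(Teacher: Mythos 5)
Your argument follows exactly the same route as the paper, which simply cites Proposition~\ref{prop: K' is s.m. for sphere} as giving the result directly; your write-up fills in the details the paper leaves implicit, namely that when $K^{\dprime}=\Gamma$ is discrete the $V_{\alpha}$ are open full-dimensional subsets of $K$, that the resulting full set of representatives is a fundamental domain, and that conditions~2 and~3 of Definition~\ref{def: Spread Model} together with the covering $\overline{\mathcal{K}}\subseteq\bigcup_{\alpha}V_{\alpha}$ from the proposition's construction yield the BCS property. The unpacking is correct and faithful to the paper's intent.
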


\begin{proof}
This is a direct consequence of Proposition \ref{prop: K' is s.m. for sphere}. 
\end{proof}

\subsection{Manipulations on spread models}

The following result allows us to ``compose'' spread models in the
sense that a spread model for $\manifold/G$ ``times'' a spread
model for $G/H$ is a spread model for $\manifold/H$.
\begin{prop}
\label{prop: M/G * G/H =00003D M/H}Suppose that a Lie group $G$
acts smoothly,  freely and properly on a manifold $\manifold$. Let
$H$ be a closed subgroup of $G$. 
\begin{enumerate}
\item If $F^{\manifold}$ is a spread model of $\manifold/G$ in $\manifold$
and $F^{G}$ is a spread model of $G/H$ in $G$ (considered as a
manifold), then $F=F^{\manifold}\cdot F^{G}$ is a spread model of
$\manifold/H$ in $\manifold$.
\item If $B^{\manifold/G}\subset\manifold/G$ and $B^{G/H}\subset G/H$
are BCS's, then so does 
\[
B^{\manifold/H}:=\pi_{\manifold/H}^{\manifold}\left(\mathcal{B}^{\mathcal{M}}\cdot\mathcal{B}^{G}\right)\subset\manifold/H,
\]
where $\mathcal{B}^{\mathcal{M}},\mathcal{B}^{G}$ are the representatives
in $F^{\manifold},F^{G}$ of $B^{\manifold/G},B^{G/H}$ respectively. 
\item If $\mu_{G/H},\mu_{\manifold/G},\mu_{\manifold/H}$ are the measures
appearing in Proposition \ref{prop: measure on spread model}, then
\[
\mu_{\manifold/H}\left(B^{\manifold/H}\right)=\mu_{\manifold/G}\left(B^{\manifold/G}\right)\mu_{G/H}\left(B^{G/H}\right).
\]
\end{enumerate}
\end{prop}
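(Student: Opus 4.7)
The three statements will be tackled in order, with statement (1) carrying most of the weight.

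For \textbf{(1)}, the plan is to verify each clause of Definition \ref{def: Spread Model}. That $F = F^{\manifold} \cdot F^{G}$ is a full set of representatives for $\manifold/H$ follows from uniquely writing each $x \in \manifold$ as $m g_{0} h$ with $m \in F^{\manifold}$, $g_{0} \in F^{G}$, $h \in H$, using the spread-model property of $F^{\manifold}$ and $F^{G}$ together with freeness. For the covering submanifolds I would set $V_{\alpha,\beta} := V^{\manifold}_{\alpha} \cdot V^{G}_{\beta}$ and $F_{\alpha,\beta} := F^{\manifold}_{\alpha} \cdot F^{G}_{\beta}$. The key ingredient is Lemma \ref{lem:FHIsM}: the map
\[
V^{\manifold}_{\alpha} \times V^{G}_{\beta} \times H_{0} \;\longrightarrow\; \manifold, \qquad (v, g_{0}, h) \;\longmapsto\; v g_{0} h,
\]
is an open diffeomorphism onto its image, obtained by composing the $\theta$-map of the spread model $F^{G}$ (Lemma \ref{lem:FHIsM} applied to $G/H_{0}$) with the $\theta$-map of $F^{\manifold}$ (applied to $\manifold/G_{0}$). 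This identifies $V_{\alpha,\beta}$ with the product $V^{\manifold}_{\alpha} \times V^{G}_{\beta}$ and, via Lemma \ref{lem:FHIsM} again, shows the projection $V_{\alpha,\beta} \to \manifold/H_{0}$ is an open diffeomorphism onto its image. The BCS condition on $F_{\alpha,\beta}$ is then inherited from the product structure (Remark \ref{rem: intersection and union of BCS}), and properness of $\pi|_{\overline{F}}$ reduces to properness of $\pi^{\manifold}_{\manifold/G}|_{\overline{F^{\manifold}}}$ and $\pi^{G}_{G/H}|_{\overline{F^{G}}}$ via the continuous map $\manifold/H \to \manifold/G$ and a sequence-passing argument.

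For \textbf{(2)}, having established (1), I would argue locally: inside each $V_{\alpha,\beta}$, the set $\mathcal{B}^{\manifold} \cdot \mathcal{B}^{G}$ is identified via the diffeomorphism $V_{\alpha,\beta} \simeq V^{\manifold}_{\alpha} \times V^{G}_{\beta}$ with the product $(\mathcal{B}^{\manifold} \cap V^{\manifold}_{\alpha}) \times (\mathcal{B}^{G} \cap V^{G}_{\beta})$, hence a BCS in $V_{\alpha,\beta}$ by Remark \ref{rem: intersection and union of BCS}. Its image in $\manifold/H$ under the open diffeomorphism $\pi|_{F_{\alpha,\beta}}$ is a BCS in an open subset of $\manifold/H$, and these patches cover $B^{\manifold/H}$, giving the BCS property globally.

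For \textbf{(3)}, I would invoke Proposition \ref{prop: measure on spread model} three times. Picking any BCS $B_{H} \subseteq H$ with $\mu_{H}(B_{H}) > 0$, applying the proposition to the $H$-action on $\manifold$ with spread model $F$ gives (using that by (1) and uniqueness of representation, $\manifold_{B^{\manifold/H}} = \mathcal{B}^{\manifold} \cdot \mathcal{B}^{G}$)
\[
\mu_{\manifold}\!\left(\mathcal{B}^{\manifold} \cdot \mathcal{B}^{G} \cdot B_{H}\right) = \mu_{\manifold/H}(B^{\manifold/H}) \cdot \mu_{H}(B_{H}),
\]
while applying it to the $G$-action on $\manifold$ (with spread model $F^{\manifold}$ and the BCS $\mathcal{B}^{G} \cdot B_{H} \subseteq G$, which is BCS since $\theta^{G}: F^{G} \times H \to G$ is a diffeomorphism onto its image) and then to the $H$-action on $G$ (with spread model $F^{G}$) yields
\[
\mu_{\manifold}\!\left(\mathcal{B}^{\manifold} \cdot \mathcal{B}^{G} \cdot B_{H}\right) = \mu_{\manifold/G}(B^{\manifold/G}) \cdot \mu_{G/H}(B^{G/H}) \cdot \mu_{H}(B_{H}).
\]
Equating the two expressions and cancelling $\mu_{H}(B_{H})$ gives (3).

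The main obstacle is clause 1 of Definition \ref{def: Spread Model} for $F$ in statement (1): exhibiting an embedded submanifold cover whose projection to $\manifold/H_{0}$ is an open diffeomorphism. This is the step where freeness of the $G$-action (and hence of the induced $H$-action on $V^{\manifold}_{\alpha} V^{G}_{\beta}$) is essential, and where Lemma \ref{lem:FHIsM} must be applied twice in a compatible way to assemble the two ambient diffeomorphisms $\theta^{\manifold}_{\alpha}$ and $\theta^{G}_{\beta}$ into a single $\theta_{\alpha,\beta}$.
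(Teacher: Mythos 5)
Parts (1) and (3) of your plan follow the paper's proof closely: Part (1) builds $V_{\alpha\beta}$ and $F_{\alpha\beta}$ as products, composes the two $\theta$-maps into $\theta_{\alpha\beta}$ via Lemma \ref{lem:FHIsM}, and checks the spread-model axioms; Part (3) applies Proposition \ref{prop: measure on spread model} three times with a common auxiliary set in $H$ whose measure cancels (the paper uses a compact identity neighborhood $A\subset H$ where you use a BCS $B_H$, which is immaterial).

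There is a genuine gap in Part (2). You identify $\mathcal{B}^\manifold\cdot\mathcal{B}^G$ inside each $F_{\alpha,\beta}$ with a product of BCS's, push it forward under the open diffeomorphism $\pi|_{F_{\alpha,\beta}}$, and conclude by asserting that ``these patches cover $B^{\manifold/H}$, giving the BCS property globally.'' That assertion is not justified: the sets $F_{\alpha\beta}$ are only open in $V_{\alpha\beta}$, and a point of $B^{\manifold/H}$ may lie only in $\pi(\overline{F_{\alpha\beta}})$ without lying in any $\pi(F_{\alpha\beta})$. More to the point, $\del B^{\manifold/H}$ can receive a contribution from the ``seams'' where the patches meet, and nothing in your plan controls those. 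The paper's proof of Part (2) addresses exactly this: it introduces $L_{\alpha\beta}:=\overline{\pi_{\manifold/G}^\manifold(F_\alpha^\manifold)}\times\overline{\pi_{G/H}^G(F_\beta^G)}$, decomposes $B_{\alpha\beta}=(B^{\manifold/G}\times B^{G/H})\cap L_{\alpha\beta}$ into the piece meeting $\del L_{\alpha\beta}$ and the interior piece, invokes Lemma \ref{lem:boundary calculations}(2) to write
\[
\del\rho_{\alpha\beta}(B)\subseteq\rho_{\alpha\beta}\left(\del L_{\alpha\beta}\right)\cup\del_{\rho_{\alpha\beta}\left(L_{\alpha\beta}^{\circ}\right)}\rho_{\alpha\beta}\left(B_{\text{int}}\right),
\]
and controls the $\rho_{\alpha\beta}(\del L_{\alpha\beta})$ piece using the hypothesis that $\del_{V_\alpha\times V_\beta}(F_\alpha\times F_\beta)$ lies in a finite union of lower-dimensional submanifolds, together with the diffeomorphism and local-diffeomorphism maps $\pi_{\manifold/H_0}^\manifold\circ\theta_\alpha^{-1}$ and $\pi_{\manifold/H}^{\manifold/H_0}$. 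This is the same boundary bookkeeping already used in the proof of Proposition \ref{prop: BCS from manifold to space}, and you would need to import it here for Part (2) to go through.
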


\begin{proof}
We begin with the \textbf{first part}. Let $F_{\alpha}^{\manifold}\subseteq V_{\alpha}^{\manifold}\subseteq\manifold$
and $F_{\beta}^{G}\subseteq V_{\beta}^{G}\subseteq G$ be as in the
definition of a spread model. The sets $V_{\alpha\beta}=V_{\alpha}^{\manifold}V_{\beta}^{G}$
are embedded manifolds as they are the diffeomorphic image of $V_{\alpha}^{\manifold}\times V_{\beta}^{G}$
under the map $\theta_{\alpha}^{-1}$, which is defined on an open
submanifold of $\manifold$ (see Lemma \ref{lem:FHIsM}). Moreover,
the composition of the diffeomorphisms 
\[
\xymatrix{V_{\alpha\beta}\times H_{0}\ar[rr]^{\theta_{\alpha}^{-1}\times id_{H_{0}}} &  & V_{\alpha}^{\manifold}\times V_{\beta}^{G}\times H_{0}\ar[rr]^{id_{V_{\alpha}^{\manifold}}\times\theta_{\beta}} &  & V_{\alpha}^{\manifold}\times V_{\beta}^{G}H_{0}\ar[r]^{\theta_{\alpha}} & V_{\alpha\beta}H_{0}}
\]
is a diffeomorphism, and it is $\theta_{\alpha\beta}$. Since $V_{\beta}^{G}H_{0}$
is open in $G$, then $V_{\alpha}^{\manifold}\times V_{\beta}^{G}H_{0}$
is open in $V_{\alpha}^{\manifold}\times G$; so $V_{\alpha\beta}$
is open in $\manifold$ as the image of the open map $\theta_{\alpha}$.
The first property of a spread model is now established.

Let $F_{\alpha\beta}=F_{\alpha}^{\manifold}F_{\beta}^{G}=\theta_{\alpha\beta}\brac{F_{\alpha}^{\manifold}\times F_{\beta}^{G}}$.
Since $\theta_{\alpha\beta}$ is open $F_{\alpha\beta}$ is also open.
$\overline{F_{\alpha}^{\manifold}}\subseteq V_{\alpha}^{\manifold}$
and $\overline{F_{\beta}^{G}}\subseteq V_{\beta}^{\manifold}$, hence
$\theta_{\alpha\beta}\brac{\overline{F_{\alpha}^{\manifold}}\times\overline{F_{\beta}^{G}}}=\overline{F_{\alpha}^{\manifold}}\cdot\overline{F_{\beta}^{G}}\subseteq\overline{F_{\alpha\beta}}$.
As $\theta_{\alpha\beta}^{-1}$ is continuous we conclude that $\overline{F_{\alpha}^{\manifold}}\cdot\overline{F_{\beta}^{G}}$
is closed and hence $\overline{F_{\alpha}^{\manifold}}\cdot\overline{F_{\beta}^{G}}=\overline{F_{\alpha\beta}}$.
Furthermore, 
\[
\cup_{\alpha,\beta}\overline{F_{\alpha\beta}}=\cup_{\alpha,\beta}\overline{F_{\alpha}^{\manifold}}\cdot\overline{F_{\beta}^{G}}=\brac{\cup_{\alpha}\overline{F_{\alpha}^{\manifold}}}\cdot\brac{\cup_{\beta}\overline{F_{\beta}^{G}}}=\overline{F^{\manifold}}\cdot\overline{F^{G}}\supseteq F.
\]
Note for future use that this shows that $\overline{F^{\manifold}}\cdot\overline{F^{G}}$
is a closed set, hence $\overline{F^{\manifold}}\cdot\overline{F^{G}}\supseteq\overline{F}$
(in fact we get an equality). 

We check the injectivity of $\pi|_{F_{\alpha\beta}}$. Suppose that
$\pi\brac{p_{1}^{\mathcal{M}}p_{1}^{G}}=\pi\brac{p_{2}^{\mathcal{M}}p_{2}^{G}}$,
or in other words, $p_{1}^{\mathcal{M}}p_{1}^{G}H=p_{2}^{\mathcal{M}}p_{2}^{G}H$.
Hence modulo $G$ we have that $p_{1}^{\mathcal{M}}G=p_{2}^{\mathcal{M}}G$.
Since the projection to $\mathcal{M}/G$ is injective when restricted
to $F_{\alpha}^{\mathcal{M}}$, then $p_{1}^{\mathcal{M}}=p_{2}^{\mathcal{M}}$.
Since the $G$ action on $\mathcal{M}$ is free, we conclude that
$p_{1}^{G}H=p_{2}^{G}H$ -{}- but this forces $p_{1}^{G}=p_{2}^{G}$,
by the injectivity of the projection to $G/H$ restricted to $F_{\beta}^{G}$.

Next we show that $\del_{V_{\alpha\beta}}F_{\alpha\beta}$ is contained
in a finite union of lower dimensional submanifolds of $\manifold$.
This follows from the analogous assumptions on $F_{\alpha}^{\manifold}$
and $F_{\beta}^{G}$, along with the following: 
\[
\del_{V_{\alpha\beta}}F_{\alpha\beta}=\left(\brac{id_{V_{\alpha}^{\manifold}}\times\theta_{\beta}^{-1}}\circ\theta_{\alpha}^{-1}\right)\left(\del\brac{F_{\alpha}^{\manifold}\times F_{\beta}^{G}}\times\left\{ e\right\} \right).
\]

Now we show that $\pi|_{\overline{F}}$ is a proper map. Suppose that
$p_{n}\to\infty$, where $p_{n}\in\overline{F}$. Since $\overline{F}=\overline{F^{\manifold}}\cdot\overline{F^{G}}$,
we may decompose $p_{n}=p_{n}^{\mathcal{M}}p_{n}^{G}$. Assume by
contradiction that $p_{n}H$ does not tend to infinity. Consequentially,
there exist $h_{n_{k}}\in H$ and $p\in\mathcal{M}$ such that $p_{n_{k}}h_{n_{k}}\to p$.
For convenience we will write instead that $p_{n}h_{n}\to p$. Since
$\overline{F}\subseteq\overline{F^{\mathcal{M}}}G\simeq\overline{F^{\mathcal{M}}}\times G$,
we can find some $\alpha$ such that, for almost every $n$, $p_{n}h_{n}\in V_{\alpha}^{\mathcal{M}}G$.
We clearly have $p_{n}^{\mathcal{M}}\to p^{\mathcal{M}}$ and $p_{n}^{G}h_{n}\to p^{G}$,
where $p=p^{\mathcal{M}}p^{G}$. Since $G$ is assumed to act properly
on $\mathcal{M}$, we conclude from $p_{n}\to\infty$ that $p_{n}^{G}\to\infty$.
However, the restriction to $F^{G}$ of the projection $G\to G/H$
is assumed to be proper, so we get a contradiction with $p_{n}^{G}h_{n}\to p^{G}$.

Finally, we need to check that $F$ is a full set of representatives.
Let $x\in\manifold/H$ and set $x^{\prime}=xG\in\manifold/G$. There
is $p^{\mathcal{M}}\in F^{\mathcal{M}}$ which projects to $x^{\prime}$.
Since $p^{\mathcal{M}}H$ and $x$ both project to $x^{\prime}$,
there is some $g\in G$ such that $p^{\mathcal{M}}gH=x$. By definition
of $F^{G}$, there is $p^{G}\in F^{G}$ which projects to $gH$. As
a result, $p^{\mathcal{M}}p^{G}H=x$ i.e.\ $\pi\left(p^{\mathcal{M}}p^{G}\right)=x$.

We turn to the \textbf{second part} of the Proposition. We know from
the first part that $\pi_{\manifold/G}^{\manifold}\left(F_{\alpha}^{\manifold}\right)$
and $\pi_{G/H}^{G}\left(F_{\beta}^{G}\right)$ are open submanifolds
of $\manifold/G$ and $G/H$ respectively, and that the maps 
\[
\rho_{\alpha\beta}:\pi_{\manifold/G}^{\manifold}\left(F_{\alpha}^{\manifold}\right)\times\pi_{GH}^{G}\left(F_{\beta}^{G}\right)\to F_{\alpha}^{\manifold}\times F_{\beta}^{G}\to F_{\alpha\beta}\to\manifold/H
\]
are open diffeomorphisms onto their image. Denote $L_{\alpha\beta}:=\overline{\pi_{\manifold/G}^{\manifold}\left(F_{\alpha}^{\manifold}\right)}\times\overline{\pi_{GH}^{G}\left(F_{\beta}^{G}\right)}$.
Similarly to the proof of Proposition \ref{prop: BCS from manifold to space},
we split each 
\[
B_{\alpha\beta}:=\left(B^{\manifold/G}\times B^{G/H}\right)\cap L_{\alpha\beta}
\]
 into two parts: $B\cap\del L_{\alpha\beta}$ and $B_{\text{int}}:=B\cap L_{\alpha\beta}^{\circ}$.
Since, by part (2) of Lemma \ref{lem:boundary calculations}, 
\[
\del\rho_{\alpha\beta}\left(B\right)\subseteq\rho_{\alpha\beta}\left(\del L_{\alpha\beta}\right)\cup\del_{\rho_{\alpha\beta}\left(L_{\alpha\beta}^{\circ}\right)}\rho_{\alpha\beta}\left(B_{\text{int}}\right),
\]
 it is enough to show that $\rho_{\alpha\beta}\left(B_{\text{int}}\right)$
is BCS inside $\rho_{\alpha\beta}\left(L_{\alpha\beta}^{\circ}\right)$
and that locally $\rho_{\alpha\beta}\left(\del L_{\alpha\beta}\right)$
is contained in a finite union of codimension $\geq1$ submanifolds.

The first statement is clear since $\rho_{\alpha\beta}|_{L_{\alpha\beta}^{\circ}}$
is an open diffeomorphism onto its image $\rho_{\alpha\beta}\left(L_{\alpha\beta}^{\circ}\right)$. 

For the second statement, first notice that 
\[
\rho_{\alpha\beta}\left(\del L_{\alpha\beta}\right)=\pi_{\manifold/H}^{\manifold/H_{0}}\circ\pi_{\manifold/H_{0}}^{\manifold}\left(\theta_{\alpha}^{-1}\left(\del_{V_{\alpha}\times V_{\beta}}\left(F_{\alpha}\times F_{\beta}\right)\right)\right).
\]
By assumption, $\del_{V_{\alpha}\times V_{\beta}}\left(F_{\alpha}\times F_{\beta}\right)$
is locally contained in a finite union of codimension $\geq1$ submanifolds
(w.r.t. $V_{\alpha}\times V_{\beta}$). Hence, using the fact that
$\pi_{\manifold/H_{0}}^{\manifold}\circ\theta_{\alpha}^{-1}$ is an
open diffeomorphism, we conclude that $\pi_{\manifold/H_{0}}^{\manifold}\left(\theta_{\alpha}^{-1}\left(\del_{V_{\alpha}\times V_{\beta}}\left(F_{\alpha}\times F_{\beta}\right)\right)\right)$
is also locally contained in a finite union of codimension $\geq1$
submanifolds w.r.t. $\manifold/H_{0}$. Finally, this property is
stable under $\pi_{\manifold/H}^{\manifold/H_{0}}$ since this map
is a local diffeomorphism. 

It is left to prove the \textbf{third part} of the proposition. Let
$\mu_{G},\mu_{H},\mu_{\mathcal{M}}$ be the measures appearing in
Proposition \ref{prop: measure on spread model}, and assume $A\subset H$
be a compact neighborhood of $e_{H}$ so that $0<\mu_{H}\left(A\right)<\infty$.
By Proposition \ref{prop: measure on spread model} we have the following
equalities:
\begin{align*}
\mu_{\manifold}\left(\mathcal{B}^{\manifold}\mathcal{B}^{G}A\right) & =\mu_{G}\left(\mathcal{B}^{G}A\right)\mu_{\manifold/G}\left(B^{\manifold/G}\right),\\
\mu_{G}\left(\mathcal{B}^{G}A\right)= & \mu_{G/H}\left(B^{G/H}\right)\mu_{H}\left(A\right),\\
\mu_{\manifold}\left(\mathcal{B}^{\manifold}\mathcal{B}^{G}A\right) & =\mu_{H}\left(A\right)\mu_{\manifold/H}\left(B^{\manifold/H}\right).
\end{align*}
It follows that
\[
\mu_{\manifold/H}\left(B^{\manifold/H}\right)=\mu_{\manifold/G}\left(B^{\manifold/G}\right)\mu_{G/H}\left(B^{G/H}\right).
\]
\end{proof}
The content of the following proposition is that if a space can be
written as a quotient in two ways, $\mathcal{M}/G$ and $\mathcal{\manifold}^{\prime}/H$,
where $\mathcal{\manifold}^{\prime}\subset\manifold$ and $H<G$,
then a spread model for the latter a also a spread model for the first. 
\begin{prop}
\label{prop: S.M. for space that can be written as two quotients}Suppose
that $\mathcal{M}$ is a manifold and $\mathcal{M}^{\prime}$ is an
embedded closed submanifold of $\mathcal{M}$. Suppose that $G$ is
a Lie group and $H$ is a closed subgroup of $G$ satisfying $H_{0}=G_{0}\cap H$.
Assume that $G$ acts on $\mathcal{M}$ and $H$ is stabilizes $\mathcal{M}^{\prime}$;
the action is free, proper and smooth. Finally assume that the map
$\iota:\mathcal{M}^{\prime}/H\to\mathcal{M}/G$ given by $m^{\prime}H\to mG$
is a diffeomorphism. If $F$ is a spread model of $\mathcal{M}^{\prime}/H$
in $\mathcal{M}^{\prime}$, then it is also a spread model of $\mathcal{M}/G$
in $\mathcal{M}$.
\end{prop}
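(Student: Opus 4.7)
The plan is to verify the four conditions in the definition of a spread model for $F \subset \mathcal{M}$ with respect to the $G$-action, reusing verbatim the submanifolds $V_\alpha$ and the open subsets $F_\alpha$ that witness $F \subset \mathcal{M}'$ being a spread model for $\mathcal{M}'/H$. Because $\mathcal{M}'$ is closed and embedded in $\mathcal{M}$, the $V_\alpha$ are automatically embedded submanifolds of $\mathcal{M}$, and closures in $\mathcal{M}'$ agree with closures in $\mathcal{M}$. Thus conditions 2 and 3 transfer for free, and the remaining work concerns condition 1 (the projection to $\mathcal{M}/G_0$ is an open diffeomorphism onto its image), condition 4 (properness of $\pi_{\mathcal{M}/G}|_{\overline{F}}$), and checking that $F$ is a full set of representatives for $\mathcal{M}/G$.

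The key step, which I expect to be the main obstacle, is the following lemma: the natural map
\[
\iota_0 : \mathcal{M}'/H_0 \longrightarrow \mathcal{M}/G_0, \qquad m'H_0 \longmapsto m'G_0,
\]
is a well-defined open embedding. Well-definedness is immediate from $H_0 \subseteq G_0$. For the topological content, I would note that the quotient maps $q_{\mathcal{M}'}:\mathcal{M}'/H_0 \to \mathcal{M}'/H$ and $q_\mathcal{M}:\mathcal{M}/G_0 \to \mathcal{M}/G$ are covering maps (by properness and freeness of the actions of the discrete groups $H/H_0$ and $G/G_0$), and they fit into the commutative square whose bottom row is the diffeomorphism $\iota$. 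Since $q_{\mathcal{M}}\circ \iota_0 = \iota\circ q_{\mathcal{M}'}$ is a local diffeomorphism and $q_\mathcal{M}$ is itself a local diffeomorphism, $\iota_0$ must also be a local diffeomorphism. For injectivity, suppose $\iota_0(m'_1 H_0) = \iota_0(m'_2 H_0)$; then $m'_1 = m'_2 g$ for some $g \in G_0$. Projecting to $\mathcal{M}/G$ and applying $\iota^{-1}$ yields $m'_1 H = m'_2 H$, so $m'_1 = m'_2 h$ with $h \in H$; freeness of the $G$-action forces $h = g \in G_0 \cap H = H_0$, and this is exactly where the hypothesis $H_0 = G_0 \cap H$ is used. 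An injective local diffeomorphism is an open embedding.

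With $\iota_0$ in hand, condition 1 of the definition follows by composing the given open diffeomorphism $V_\alpha \to \mathcal{M}'/H_0$ with $\iota_0$ to get an open diffeomorphism $V_\alpha \to \mathcal{M}/G_0$ onto its image. For condition 4, given a compact $K \subset \mathcal{M}/G$, the set $\iota^{-1}(K) \subset \mathcal{M}'/H$ is compact, and for any $p \in \overline{F} \subset \mathcal{M}'$ one has $pG \in K \iff \iota(pH) \in K \iff pH \in \iota^{-1}(K)$. Hence
\[
\pi_{\mathcal{M}/G}^{-1}(K)\cap \overline{F} \;=\; \pi_{\mathcal{M}'/H}^{-1}(\iota^{-1}(K))\cap \overline{F},
\]
which is compact by the assumed properness of $\pi_{\mathcal{M}'/H}|_{\overline{F}}$.

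Finally, that $F$ is a full set of representatives for $\mathcal{M}/G$ follows because $\iota$ is a bijection: given $m \in \mathcal{M}$, write $\iota^{-1}(mG) = m'H$ with $m' \in \mathcal{M}'$, pick the unique $p \in F$ with $pH = m'H$, and observe $pG = m'G = mG$; uniqueness is seen by applying $\iota$ to the equality $p_1 G = p_2 G$ with $p_1,p_2 \in F$ to obtain $p_1 H = p_2 H$, hence $p_1 = p_2$. This exhausts all four conditions and the representative property, completing the argument.
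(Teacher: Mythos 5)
Your proof is correct and takes essentially the same route as the paper's. The paper reduces the whole statement to showing that the natural map $\mathcal{M}'/H_0 \to \mathcal{M}/G_0$ (your $\iota_0$, its $\mathrm{proj}$) is an open diffeomorphism onto its image: it proves injectivity by the same chain $p_1 G_0 = p_2 G_0 \Rightarrow p_1 H = p_2 H \Rightarrow g \in G_0 \cap H = H_0$, and establishes the local-diffeomorphism part via a commutative-diagram immersion argument that is the same in content as your covering-space argument, after which condition~1 follows by composing with the given open diffeomorphism $V_\alpha \to \mathcal{M}'/H_0$; you are merely a bit more explicit than the paper about conditions 2--4 and the full-set-of-representatives check, which the paper declares to be clear.
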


\begin{proof}
Let $V_{\alpha},F_{\alpha}\subseteq\mathcal{M}^{\prime}$ as in the
definition of a spread model. The only condition that is not clear
is the first one. By Lemma \ref{lem:FHIsM}, it is sufficient to prove
that $\theta_{\alpha}:V_{\alpha}\times G_{0}\to V_{\alpha}G_{0}$
is a diffeomorphism whose image is open. For this, one has to show
that $\theta_{\alpha}$ is an injective immersion (since $V_{\alpha}\times G_{0}$
and $\mathcal{M}$ both have the same dimension). 

If $pg=p_{1}\in V_{\alpha}G_{0}$, then $p=p_{1}$ modulo $G_{0}$.
Since $p,p_{1}\in V_{\alpha}$, we conclude that $p=p_{1}$ modulo
$H$. In other words, there is $h\in H$ such that $ph=p_{1}$. Since
the action is free, we get that $g=h\in G_{0}\cap H=H_{0}$. This
forces $p=p_{1}$ and $g=h=e$, since $\theta_{\alpha}^{\prime}:V_{\alpha}\times H_{0}\to V_{\alpha}H_{0}$
is injective. 

The natural map $\pi_{\alpha}^{0}:V_{\alpha}\to\mathcal{M}/G_{0}$
is an open diffeomorphism onto its image due to the following argument.
Consider the following commutative diagram:
\[
\xymatrix{V_{\alpha}\ar[r]\ar[rd]^{\pi_{\alpha}^{0}} & \mathcal{M}^{\prime}/H_{0}\ar[d]^{proj}\ar[r]^{\phi} & \mathcal{M}^{\prime}/H\ar[d]^{\iota}\\
 & \mathcal{M}/G_{0}\ar[r]^{\iota^{0}} & \mathcal{M}/G
}
.
\]
Since $\pi_{\alpha}^{0},\phi$ and $\iota$ are immersions, we conclude
that $\iota^{0}\circ proj\circ\pi_{\alpha}^{0}$ is also an immersion.
Since $\pi_{\alpha}^{0}$ is an open diffeomorphism by assumption,
we get that $proj$ must be an immersion. If $p_{1},p_{2}\in\mathcal{M}^{\prime}$
such that $p_{2}=p_{1}g$ for some $g\in G_{0}$, it must be that
$p_{1}$ and $p_{2}$ are equal modulo $H$, since $\iota$ is injective.
Freeness of the action implies that $g\in H$, and so $g\in H_{0}$.
As a result, $proj$ is injevtive and so $proj\circ\pi_{\alpha}^{0}$
is also. The claim now follows since $V_{\alpha}$ and $\mathcal{M}/G_{0}$
have the same dimension.
\end{proof}

\section{Construction of fundamental domains for $\protect\sl m\left(\protect\ZZ\right)$}

The goal of this section is to recall a construction for fundamental
domains of $\sl m\left(\ZZ\right)$ inside $\sl m\left(\RR\right)$
and inside $\so m\left(\RR\right)\backslash\sl m\left(\RR\right)$.
The motivation for this is that in the next section we will see that
these fundamental domains are spread models for two spaces of lattices
that we now turn to describe. 

Let $\Lambda$ be a (full rank) lattice inside $\RR^{m}$ having the
columns of $\mtx\in\gl m\left(\RR\right)$ as an ordered basis. It
is well known that any other basis of $\Lambda$ appears as the columns
of a matrix obtained by multiplying $\mtx$ from the right by a matrix
from $\gl m\left(\ZZ\right)$. As a result, the space of $m$-lattices
can be defined as $\gl m\left(\RR\right)/\gl m\left(\ZZ\right)$.
One can also consider a more crude space which is the space of \emph{shapes}
of lattices: two lattices $\Lambda_{1}=\mtx_{1}\cdot\gl m\left(\ZZ\right)$
and $\Lambda_{2}=\mtx_{2}\cdot\gl m\left(\ZZ\right)$ have the same
shape if $\Lambda_{1}$ differs from $\Lambda_{2}$ by an orthogonal
transformation and rescaling, namely there are $k\in\ort m\left(\RR\right)$
and $c>0$ such that $ck\mtx_{1}\cdot\gl m\left(\ZZ\right)=\mtx_{2}\cdot\gl m\left(\ZZ\right)$.
As a result, the space of shapes can be defined as 
\[
\shapespace m=\po m\left(\RR\right)\backslash\pgl m\left(\RR\right)/\pgl m\left(\ZZ\right)\backsimeq\so m\left(\RR\right)\backslash\sl m\left(\RR\right)/\sl m\left(\ZZ\right).
\]
Notice that in the right hand side we consider unimodular lattices
(i.e. lattices with covolume $1$), since clearly every lattice can
be rescaled to a unimodular lattice. We let 
\[
\latspace m:=\sl m\left(\RR\right)/\sl m\left(\ZZ\right)
\]
denote the space of unimodular latices in $\RR^{m}$. 

In Subsection \ref{subsec: Siegel reduced bases} we introduce a variant
of Siegel sets inside $\sl m\left(\RR\right)$, which contain a finite
number of representatives from every (right) orbit of $\sl m\left(\ZZ\right)$,
and therefore a fundamental domain; in Subsection \ref{subsec: constructing F_m}
we define the fundamntal domain $\groupfund m\subset\sl m\left(\RR\right)$
inside the Siegel set, as well as the resulting fundamental domain
$\symfund m$ inside $\so m\left(\RR\right)\backslash\sl m\left(\RR\right)$,
or more precisely in $P_{m}$, the group of upper triangular matrices
of determinant $1$ with positive diagonal entries; indeed the Iwasawa
decomposition of $\sl m\left(\RR\right)$ implies that $\so m\left(\RR\right)\backslash\sl m\left(\RR\right)$
is diffeomorphic to $P_{m}$.

Let us stress on the fact that the construction for $\symfund m$
is not new (\cite{Schmidt_98}, \cite{Grenier_93}), but we bring
it here for completeness, as well as for adding the way to obtain
$\groupfund m$ from $\symfund m$.

\subsection{Reduced bases\label{subsec: Siegel reduced bases}}

Write $P_{m}=A_{m}N_{m}$ where $A_{m}$ is the subgroup of diagonal
matrices, and $N_{m}$ the subgroup of unipotent matrices. 

Let $\Lambda$ be a lattice in $\RR^{m}$ (not necessarily unimodular).
We describe an inductive method to construct an ordered basis $\left\{ v_{1},\ldots,v_{m}\right\} $
for $\lat$ as follows. Let $v_{1}$ be a shortest nonzero element
of $\lat$. For future reference, we denote its length by $a_{1}$
and its direction $v_{1}/a_{1}$ by $\phi_{1}$. Next, write $V_{1}$
for $\sp{\RR}{v_{1}}_{\RR}$ and consider the projection of $\Lambda$
to $V_{1}^{\perp}$, which is a lattice of dimension $m-1$. One can
find a vector $v_{2}\in\Lambda$ whose projection to $V_{1}^{\perp}$
is of nonzero minimal length $a_{2}$. Since actually all the elements
in $\left\{ v_{2}+nv_{1}:n\in\ZZ\right\} $ share this property of
having their projection to $V_{1}^{\perp}$ be of length $a_{2}$,
we may assume that $v_{2}$ also satisfies that its projection to
$V_{1}$ is $n_{1,2}a_{1}\cdot\phi_{1}$ with $\left|n_{1,2}\right|\leq\frac{1}{2}$.
We proceed by induction: 
\begin{defn}[\textbf{and notations}]
\label{def: Siegel reduced basis}A \emph{Reduced} basis for a lattice
$\lat$ is a basis $\left\{ v_{1},\ldots,v_{m}\right\} $ in which
for all $j\in\left\{ 1,\ldots,m\right\} $, the basis element $v_{j}$
is chosen such that:

\begin{enumerate}
\item The projection of $v_{j}$ to $V_{j-1}^{\perp}=\perpen{\brac{\sp{\RR}{v_{1},\ldots,v_{j-1}}}}$
has minimal non-zero length $a_{j}$ (here $V_{0}=\left\{ 0\right\} $);
denote this projection by $a_{j}\phi_{j}$, where $\phi_{j}$ is a
unit vector. 
\item The projection of $v_{j}$ to $V_{j-1}=\sp{\RR}{v_{1},\ldots,v_{j-1}}=\sp{\RR}{\phi_{1},\ldots,\phi_{j-1}}$
is 
\[
\sum_{i=1}^{j-1}\exd{\left(n_{i,j}a_{i}\right)}{\mbox{scalars}}\phi_{i}\mbox{ with \ensuremath{\left|n_{ij}\right|\leq\frac{1}{2}\;\,}for all \ensuremath{i=1,\ldots,j-1}}.
\]
\end{enumerate}
The matrix $\mtx=\left[\begin{array}{ccc}
v_{1} & \cdots & v_{m}\end{array}\right]$ is called a \emph{reduced matrix} of $\Lambda$.
\end{defn}

We note that in the case of unimodular bases (bases of co-volume $1$),
one may need to replace $v_{1}$ by $-v_{1}$ in order for the reduced
matrix $\mtx$ to have determinant $1$ (and not $-1$). 

The parameters $\left\{ a_{j}\right\} $, $\left\{ n_{i,j}\right\} $
and $\left\{ \phi_{j}\right\} $ involved in the process of constructing
a reduced basis $\left\{ v_{1},\ldots,v_{m}\right\} $ are interpreted
via the $KAN$ coordinates of the associated reduced matrix as follows.
Let 
\[
a=\diag{a_{1},\ldots,a_{m}},\;k=\left[\begin{matrix}\phi_{1} & \cdots & \phi_{m}\end{matrix}\right]
\]
and
\[
n=\begin{pmatrix}1 & n_{1,1} & \dots & n_{1,m}\\
 & 1 &  & \vdots\\
 &  & \ddots & n_{m-1,m}\\
 &  &  & 1
\end{pmatrix}.
\]
Then, since the $i$-th column of $ka$ is $a\phi_{i}=$ the projection
of $v_{i}$ to $V_{i-1}^{\perp}$, and the $i$-th column of $n$
is exactly the coordinates of $v_{i}$ w.r.t.\ the orthogonal set
$\left\{ a_{1}\phi_{1},\dots,a_{m}\phi_{m}\right\} $, we obtain that
the reduced matrix is 
\[
\mtx=\left[\begin{array}{ccc}
v_{1} & \cdots & v_{m}\end{array}\right]=kan.
\]

\begin{lem}
\label{lem: BLC. facts about z in RS domain}Suppose $\mtx=k_{_{M}}a_{_{M}}n_{_{M}}$
is -reduced w.r.t.\ some lattice $\Lambda$, where $k_{_{M}}$, $a_{_{M}}$
and $n_{_{M}}$ are as above. 

\begin{enumerate}
\item \label{enu: entries of n in =00005B-0.5,0.5=00005D}$n_{\mtx}$ is
a unipotent upper triangular matrix whose entries are bounded in $\left[-\frac{1}{2},\frac{1}{2}\right]$
(in particular, $\norm{n_{_{M}}^{\pm1}},\norm{n_{_{M}}^{\pm\transpose}}\porsmall1$).
\item \label{enu: entries of a increasing}$a_{_{M}}=\diag{a_{1},\ldots,a_{m}}$
is a diagonal matrix which satisfies that $a_{1}\porsmall\cdots\porsmall a_{m}$.
Specifically, $\frac{\sqrt{3}}{2}a_{j}\leq a_{j+1}$. 
\item \label{enu: norm out of E_(j-1)}If $\lm\in\lat$ (i.e. $\lm=\mtx v$
for some $v\in\mathbb{Z}^{m}$) satisfies $\lm\notin V_{j-1}$, then
\[
\left\Vert \lm\right\Vert \geq\text{dist}\left(\lm,V_{j-1}\right)\geq\dist{v_{j},V_{j-1}}=a_{j}.
\]
\item \label{enu: norm in E_j}If $x\in V_{j}$, then $\left\Vert a_{_{M}}x\right\Vert \porsmall a_{j}\left\Vert x\right\Vert $. 
\end{enumerate}
\end{lem}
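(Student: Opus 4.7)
The plan is to dispatch items (1), (3), and (4) directly from Definition~\ref{def: Siegel reduced basis}, while item (2) is the one piece that genuinely uses the minimality in the construction of a reduced basis and constitutes the main content of the lemma.

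For item (1), the construction writes $v_{j}=\sum_{i<j}n_{i,j}a_{i}\phi_{i}+a_{j}\phi_{j}$ with $|n_{i,j}|\le 1/2$, so the $(i,j)$ entry of $n_{\mtx}$ (for $i<j$) is precisely $n_{i,j}\in[-1/2,1/2]$; combined with ones on the diagonal and zeros below the diagonal, this is the claim. The operator-norm bounds for $n_{\mtx}^{\pm 1}$ and $n_{\mtx}^{\pm\transpose}$ are then standard for a unipotent matrix of fixed size with uniformly bounded entries.

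For item (2), I would examine the projection of $v_{j+1}$ onto $\perpen{V_{j-1}}$. It decomposes orthogonally as a piece in $\perpen{V_{j}}$, equal to $a_{j+1}\phi_{j+1}$ and of length $a_{j+1}$, plus a piece in $\perpen{V_{j-1}}\cap V_{j}=\RR\phi_{j}$, equal to $n_{j,j+1}a_{j}\phi_{j}$ and of length $\le a_{j}/2$ by item (1). Since $v_{j+1}\notin V_{j-1}$, the minimality defining $a_{j}$ forces
\[
a_{j}^{2}\;\le\;a_{j+1}^{2}+(a_{j}/2)^{2},
\]
whence $a_{j+1}\ge\tfrac{\sqrt{3}}{2}a_{j}$; the monotonicity chain $a_{1}\porsmall\cdots\porsmall a_{m}$ then follows by iteration.

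For item (3), $\norm{\lambda}\ge\dist{\lambda,V_{j-1}}$ is nonexpansiveness of orthogonal projection; $\dist{\lambda,V_{j-1}}\ge a_{j}$ is the very definition of $a_{j}$ as the minimum nonzero length of the projection of a lattice vector onto $\perpen{V_{j-1}}$, which applies because $\lambda\notin V_{j-1}$ makes that projection nonzero; and $\dist{v_{j},V_{j-1}}=a_{j}$ is again the definition. For item (4), reading $V_{j}$ in this item as the standard flag subspace $\RR e_{1}\oplus\cdots\oplus\RR e_{j}$ stabilized by the diagonal group $A_{m}$ (which is the interpretation making $a_{\mtx}$ act invariantly), the monotonicity from item (2) immediately gives $\norm{a_{\mtx}x}^{2}=\sum_{i\le j}a_{i}^{2}x_{i}^{2}\le a_{j}^{2}\norm{x}^{2}$. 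The only genuine obstacle is item (2), where the Pythagorean split along the flag must be combined with both the $|n_{j,j+1}|\le 1/2$ bound and the minimality defining $a_{j}$ to squeeze out the exact constant $\sqrt{3}/2$; the remaining items are pure bookkeeping.
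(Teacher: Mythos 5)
Your proposal is correct and follows the paper's own route for all four items: items (1) and (3) are read off the definition, item (2) is the same Pythagorean split of the projection of $v_{j+1}$ onto $\perpen{V_{j-1}}$ combined with $|n_{j,j+1}|\le\tfrac12$ and the minimality defining $a_j$, and item (4) is the same diagonal-action computation. The only small imprecision is in item (4), where $\sum_{i\le j}a_i^2x_i^2\le a_j^2\norm{x}^2$ should really be $\porsmall$ rather than $\le$, since item (2) only gives $a_{j+1}\ge\tfrac{\sqrt3}{2}a_j$ (so the $a_i$ are monotone only up to a constant depending on $m$), which is exactly what the $\porsmall$ in the statement is there to absorb.
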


\begin{proof}
Parts \ref{enu: entries of n in =00005B-0.5,0.5=00005D} and \ref{enu: norm out of E_(j-1)}
are immediate from the construction of $\mtx$. For part \ref{enu: entries of a increasing},
recall that $\left\{ a_{1}\phi_{1},\dots,a_{m}\phi_{m}\right\} $
is an orthogonal set in $\RR^{m}$ and that  $v_{j+1}=a_{j+1}\phi_{j+1}+\sum_{i=1}^{j}n_{i,j+1}a_{i}\phi_{i}$.
Then,
\[
a_{j}^{2}\overset{\substack{_{\mbox{part \ref{enu: norm out of E_(j-1)}}}\\
_{(v_{j+1}\notin V_{j-1})}
}
}{\leq}\text{dist}\left(v_{j+1},V_{j-1}\right)^{2}=\norm{n_{j,j+1}\left(a_{j}\phi_{j}\right)+\left(a_{j+1}\phi_{j+1}\right)}^{2}=a_{j}^{2}\left|n_{j,j+1}\right|^{2}+a_{j+1}^{2}.
\]
Now, since $\left|n_{j,j+1}\right|\leq\frac{1}{2}$ (by part \ref{enu: entries of n in =00005B-0.5,0.5=00005D}),
we obtain:
\[
\frac{1}{4}a_{j}^{2}+a_{j+1}^{2}\geq a_{j}^{2}
\]
and therefore 
\[
a_{j+1}\geq\frac{\sqrt{3}}{2}a_{j}.
\]
As for part \ref{enu: norm in E_j}, 
\[
\norm{a_{_{\text{}}}x}=\bignorm{\left(\begin{smallmatrix}a_{1}\\
 & \ddots\\
 &  & a_{m}
\end{smallmatrix}\right)\left(\begin{smallmatrix}x_{1}\\
\vdots\\
x_{j}\\
0
\end{smallmatrix}\right)}=\bignorm{\left(\begin{smallmatrix}a_{1}x_{1}\\
\vdots\\
a_{j}x_{j}\\
0
\end{smallmatrix}\right)}\leq\max_{1\leq i\leq j}\left|a_{i}\right|\norm x\overset{_{\mbox{part \ref{enu: entries of a increasing}}}}{\asymp}a_{j}\norm x.
\]
\end{proof}
\begin{defn}
\label{def: Siegel-Karasik set}We refer to the sets
\[
\left\{ \mtx\in\gl m\left(\RR\right):\:\substack{\mbox{\ensuremath{\mtx} is reduced for the lattice }\\
\mbox{spanned by its columns}
}
\right\} 
\]
as \emph{reduced Siegel sets}.
\end{defn}

\begin{rem}
The reduced Siegel sets are contained in the well-known Siegel sets
(e.g., \cite[Chapter X]{Bekka_Mayer,Rag_72}). 
\end{rem}

We note that parts \ref{enu: entries of n in =00005B-0.5,0.5=00005D}
and \ref{enu: norm out of E_(j-1)} of Lemma \ref{lem: BLC. facts about z in RS domain}
are the defining conditions of the reduced Siegel sets (the inequalities
in parts \ref{enu: entries of a increasing} and \ref{enu: norm in E_j}
are redundant, since they follow from part \ref{enu: norm out of E_(j-1)}).
Observe that these defining inequalities depend only on the entries
of the $N$ and $A$ components of the matrix. Indeed, let $\mtx=\left[\begin{array}{ccc}
v_{1} & \cdots & v_{m}\end{array}\right]=kan$ and $z=an$; the inequalities in \ref{enu: entries of n in =00005B-0.5,0.5=00005D}
are on the entries of $n$, and the inequalities in \ref{enu: norm out of E_(j-1)}
translate into 
\begin{equation}
a_{j}\leq\bignorm{\substack{\mbox{projection of \ensuremath{zv}}\\
\mbox{to }\sp{\RR}{e_{j},\ldots e_{m}}
}
},\label{eq:dist}
\end{equation}
for every $v=\left(\alpha_{1},\dots,\alpha_{m}\right)^{\transpose}\in\ZZ^{n}$
and $j=1,\ldots,m$. This is because: 
\[
a_{j}=\dist{v_{j},V_{j-1}}\leq\dist{\exd{\sum_{i=j}^{m}\alpha_{i}v_{i}}{\mtx v},V_{j-1}}\overset{_{\mbox{rotation by \ensuremath{k}}}}{=}\dist{\sum_{i=j}^{m}\alpha_{i}z_{i},E_{j-1}}
\]
where $E_{j-1}:=\sp{\RR}{e_{1},\ldots,e_{j-1}}$ and $z=\left[z_{1},\ldots,z_{m}\right]$,
\[
=\dist{zv,E_{j-1}}=\norm{\substack{\mbox{projection of \ensuremath{zv}}\\
\mbox{to }\sp{\RR}{e_{j},\ldots e_{m}}
}
}.
\]

We also note that the number of the defining inequalities for these
reduced Siegel sets is infinite: indeed, every $v\in\ZZ^{n}$ yields
an inequality in formula \ref{eq:dist} (resp. part \ref{enu: norm out of E_(j-1)}
of Lemma \ref{lem: BLC. facts about z in RS domain}). However, it
is shown in \cite[p.49]{Schmidt_98} that it is actually sufficient
to consider the inequalities \ref{eq:dist} for only \emph{finitely
many} $v\in\ZZ^{n}$. We state it for future reference:
\begin{prop}
\label{prop: RS sets defd by finite number of inequalites}The reduced
Siegel sets are defined by a finite number of inequalities in (the
entries of) the $N$ and $A$ components of a matrix.
\end{prop}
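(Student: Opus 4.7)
The plan is to exhibit an explicit finite family of $v\in\ZZ^{m}$ such that the inequalities \eqref{eq:dist} indexed by this family, together with the finitely many conditions $|n_{i,k}|\le 1/2$, already imply \eqref{eq:dist} for every $v\in\ZZ^{m}$. Writing $v=(\alpha_{1},\ldots,\alpha_{m})^{\transpose}$ and $w_{i}=\alpha_{i}+\sum_{k>i}n_{i,k}\alpha_{k}$, the $i$-th coordinate of $zv=anv$ equals $a_{i}w_{i}$, so the level-$j$ inequality \eqref{eq:dist} reads
\[
a_{j}^{2}\;\le\;\sum_{i=j}^{m}a_{i}^{2}w_{i}^{2}.
\]
This depends only on $\alpha_{j},\ldots,\alpha_{m}$ (and on $a,n$), so I may restrict attention to $v$'s with $\alpha_{1}=\cdots=\alpha_{j-1}=0$.

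First I would use the inequalities produced by the standard basis vectors: taking $v=e_{j+1}$ yields $a_{j}^{2}\le a_{j}^{2}n_{j,j+1}^{2}+a_{j+1}^{2}$, hence $a_{j+1}\ge(\sqrt{3}/2)a_{j}$ by $|n_{j,j+1}|\le 1/2$. Iterating gives $a_{i}\ge(\sqrt{3}/2)^{i-j}a_{j}$ for $i\ge j$, and so $a_{j}/a_{i}$ is bounded above by a constant depending only on $m$. This is exactly part (2) of Lemma \ref{lem: BLC. facts about z in RS domain}, obtained from only finitely many choices of $v$ (the $e_{k}$'s).

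Next I would truncate the potentially violating $v$'s. If the level-$j$ inequality fails for some $v$, then $|a_{i}w_{i}|<a_{j}$ for every $i\ge j$, hence $|w_{i}|<a_{j}/a_{i}\le(2/\sqrt{3})^{i-j}$. Since $\alpha_{m}=w_{m}$ and $\alpha_{i}=w_{i}-\sum_{k>i}n_{i,k}\alpha_{k}$ with $|n_{i,k}|\le 1/2$, a downward induction on $i$ bounds each $|\alpha_{i}|$, $i\ge j$, by a constant $C_{m}$ depending only on $m$. Therefore only finitely many integer tuples $(\alpha_{j},\ldots,\alpha_{m})\in\ZZ^{m-j+1}$ can yield violating $v$'s.

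Taking our finite family of $v$'s to consist of (a) $e_{1},\ldots,e_{m}$ (to control the ratios $a_{j}/a_{i}$) and (b) for each $j$, all $v$ supported on coordinates $j,\ldots,m$ with $|\alpha_{i}|\le C_{m}$, and adjoining the finitely many inequalities $|n_{i,k}|\le 1/2$, I obtain a finite defining system for the reduced Siegel set. The only delicate point is a mild circularity: the truncation bound $|\alpha_{i}|\le C_{m}$ relies on the ratio estimate from (a), which itself is one of the inequalities to be imposed. This is handled by declaring both families upfront and arguing as above that, once (a) is in force, any $v$ outside (b) automatically satisfies its level-$j$ inequality.
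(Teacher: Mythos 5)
Your proposal is correct, and it is strictly more informative than what the paper gives: the paper's ``proof'' of this proposition is just a citation to Schmidt (\cite[p.49]{Schmidt_98}), with the subsequent remark only sketching the shape of the finite family. You supply the actual argument, and it matches the strategy that the paper's remark gestures at.

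The computation is clean. Writing $w_i=\alpha_i+\sum_{k>i}n_{i,k}\alpha_k$, the orthogonality of the $a_i\phi_i$'s makes the level-$j$ inequality exactly $a_j^2\le\sum_{i\ge j}a_i^2w_i^2$, which indeed depends only on $(\alpha_j,\dots,\alpha_m)$; the inequalities from $v=e_{j+1}$ together with $|n_{j,j+1}|\le\tfrac12$ yield $a_{j+1}\ge(\sqrt3/2)a_j$ (this reproduces part~\ref{enu: entries of a increasing} of Lemma~\ref{lem: BLC. facts about z in RS domain}); then a failure of the level-$j$ inequality forces $|w_i|<a_j/a_i\le(2/\sqrt3)^{i-j}$, and the downward recursion $\alpha_i=w_i-\sum_{k>i}n_{i,k}\alpha_k$ with $|n_{i,k}|\le\tfrac12$ bounds every $|\alpha_i|$ by a constant depending only on $m$. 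Your treatment of the apparent circularity is the right one: declare both finite families at once; any $z$ satisfying them all satisfies the ratio estimate, which then shows that every $v$ outside family (b) automatically verifies its level-$j$ inequality. One minor point worth flagging in a cleaned-up version: the inequalities \eqref{eq:dist} should implicitly be taken only over $v$ with $(\alpha_j,\dots,\alpha_m)\neq 0$, since for $\lambda=Mv\in V_{j-1}$ the distance is zero and the inequality is vacuously excluded; your truncation argument is consistent with this convention. Altogether, your proof is a self-contained, elementary replacement for the external reference, giving an explicit bound $C_m$ where the paper (and Schmidt) leave the constant implicit.
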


\begin{rem}
The finitely many integral vectors $u$ that imply the sufficient
inequalities from \ref{eq:dist} are as follows. For any $j=1,\ldots,m$
one considers the $v\in\ZZ^{n}$ which satisfy 
\[
\max\left(\left|\alpha_{j}\right|,\dots,\left|\alpha_{m}\right|\right)\leq\frac{C}{a_{j}}\norm{\substack{\mbox{projection of \ensuremath{zv}}\\
\mbox{to }\sp{\RR}{e_{j},\ldots e_{m}}
}
}
\]
where $C$ is some constant that depends only on $m$ and can be computed
explicitly from \cite{Schmidt_98}; clearly, there is only a finite
number of integral vectors $u$ which satisfy this condition. In particular,
the reduced Siegel sets can be computed explicitly. 
\end{rem}

\subsection{Fundamental domains of $\protect\sl m\left(\protect\ZZ\right)$ inside
$\protect\sl m\left(\protect\RR\right)$ and $P_{m}$\label{subsec: constructing F_m}}

From now on we shall only consider unimodular lattices (resp.\ bases)
in $\RR^{m}$. Since these unimodular lattices are identified with
cosets in $\sl m\left(\RR\right)/\sl m\left(\ZZ\right)$, where a
representative for a coset is a choice of a basis for the corresponding
lattice, it follows that a fundamental domain for (the right action
of) $\sl m\left(\ZZ\right)$ inside $\sl m\left(\RR\right)$ consists
of a unique choice of a basis for every unimodular lattice $\lat<\RR^{m}$. 

We know (by the construction presented in the previous subsection)
that every lattice has a reduced basis, and therefore the reduced
Siegel set  contains a fundamental domain for $\sl m\left(\ZZ\right)$.
We turn to describe a \emph{closure} of such a fundamental domain,
namely a choice of a unique reduced basis for almost every unimodular
lattice $\lat<\RR^{m}$.
\begin{rem}
\label{rem: finitely many SR bases}It is shown in \cite{Schmidt_98}
that the number of reduced bases for a lattice $\RR^{m}$ is finite,
where a bound on this number depends only on $m$, and not on the
lattice. Intuitively, this is due to the fact that a given lattice
has only a finite number of shortest vectors (where the bound on this
number depends only on the dimension), and a reduced basis is constructed
such that in every step, one chooses a shortest vector from some lattice. 
\end{rem}

Given a reduced basis $\left\{ v_{1},\ldots,v_{m}\right\} $, one
can clearly obtain further reduced bases for the same lattice by alternating
the signs of the elements $v_{j}$. Note that the corresponding reduced
matrices $\mtx$ will have the same $A$ components, and in fact they
will vary from each other only by the signs of the entries of $n$
and $k$ as follows:
\begin{itemize}
\item replacing $\left\{ v_{1},\ldots,v_{m}\right\} $ by $\left\{ -v_{1},\ldots,-v_{m}\right\} $
is done by multiplying from $\mtx$ from the left by $-I$ (replacing
$k$ by $-k$); 
\item replacing $v_{j}$ by $-v_{j}$ for $j=2,\ldots,m$ is done by changing
the sign of the $j$-th row and column of $n$ (above the diagonal)
and changing the sign of the $j$-th column of $k$, $\phi_{j}$. 
\end{itemize}
In order to preserve the property $\det\left(\mtx\right)=1$, one
is only allowed to alternate the sign of an \emph{even} number among
the $v_{j}$'s. In particular, one is allowed to change all signs
simultaneously if and only if $-I\in K=\so m\left(\RR\right)$. 
\begin{defn}
\label{def: Fund dom F_m}We let $\groupfund m\subset\sl m\left(\RR\right)$
denote the closed subset of the reduced Siegel set (Definition \ref{def: Siegel-Karasik set})
which satisfies the following conditions on the $N$ and  $K$ components:

\begin{enumerate}
\item \label{eq: signs of n}Condition on the sign of the first row of $n$:
\[
\begin{array}{c}
2|m\Longrightarrow n_{1,j}\geq0\text{ for \ensuremath{j>2}}\Longrightarrow n_{1,j}\in\left[0,\frac{1}{2}\right]\text{ for \ensuremath{j>2}}\\
2\nmid m\Longrightarrow n_{1,j}\geq0\text{ for \ensuremath{j>1}}\Longrightarrow n_{1,j}\in\left[0,\frac{1}{2}\right]\text{ for \ensuremath{j>1}}
\end{array}.
\]
\item \textcolor{green}{\label{enu:condition on K}}Condition on the $K$-components:
they lie in a closure of a fundamental domain of the lattice $Z\left(K\right)$
in $K$.
\end{enumerate}
We also denote by $\symfund m\subseteq P_{m}$ the projection of $\groupfund m$
mod $K$, namely the set of upper triangular matrices whose columns
are reduced bases for the lattice spanned by their columns, and whose
$N$ components satisfy condition \ref{eq: signs of n}. 
\end{defn}

We state the following for future reference:
\begin{cor}
\label{cor:boundary of Fn}The boundary of $\groupfund m$ (resp.
$\symfund m$) is contained in a finite union of lower-dimeansional
manifolds in $\sl m\left(\RR\right)$ (resp. $P_{m}$).
\end{cor}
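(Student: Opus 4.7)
The plan is to exploit the Iwasawa decomposition $\sl m(\RR) \cong K \times A \times N$ (with $K=\so m(\RR)$, $A=A_m$, $N=N_m$) and show that, in these coordinates, $\groupfund m$ is a product set whose two factors have controlled boundaries. Indeed, by Proposition \ref{prop: RS sets defd by finite number of inequalites} the reduced Siegel set is cut out by finitely many (in fact polynomial) inequalities in the entries of $a$ and $n$; condition (\ref{eq: signs of n}) of Definition \ref{def: Fund dom F_m} adds finitely many linear inequalities in the entries of $n$; and condition \ref{enu:condition on K} constrains only the $K$-component. Hence, under the Iwasawa diffeomorphism, $\groupfund m$ corresponds to a product $K_{fd}\times S$, where $K_{fd}\subset K$ is the chosen closure of a fundamental domain of $Z(K)<K$, and $S\subset A\times N$ is defined by the finite collection of polynomial inequalities. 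The set $\symfund m\subset P_m = AN$ is simply $S$ under the identification $P_m \cong A\times N$.

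For the $K$-factor, Corollary \ref{cor:BCS fundamental domain for finite groups in compact groups} provides a fundamental domain $K_{fd}$ which is BCS; in particular, its boundary in $K$ lies in a finite union of lower-dimensional embedded submanifolds.

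For the $AN$-factor, each defining inequality $f\le c$ with $f$ a polynomial in the entries of $a,n$ contributes to $\partial S$ only through the zero set $\{f=c\}$. Each such level set is a real algebraic variety defined by a non-constant polynomial, and therefore admits a stratification into finitely many smooth embedded submanifolds of codimension at least one in $A\times N$. Taking the union over the (finitely many) defining functions shows that $\partial S$ is contained in a finite union of lower-dimensional submanifolds of $A\times N$.

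Combining the two analyses via the product identity
\[
\partial\bigl(K_{fd}\times S\bigr) \;\subseteq\; \bigl(\partial K_{fd}\times \overline{S}\bigr) \cup \bigl(\overline{K_{fd}}\times \partial S\bigr)
\]
yields the statement for $\groupfund m$, while the statement for $\symfund m$ follows directly from the analysis of $\partial S$. The main technical point to verify is that the zero set of each Siegel-defining polynomial decomposes into smooth strata of strictly smaller dimension; this reduces to the standard fact that a nontrivial real algebraic subset of a smooth manifold is a finite union of smooth embedded submanifolds, each of strictly smaller dimension than the ambient manifold.
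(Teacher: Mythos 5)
Your proposal is correct and unpacks exactly the argument the paper compresses into a single sentence: the paper's proof simply cites Proposition \ref{prop: RS sets defd by finite number of inequalites} and Definition \ref{def: Fund dom F_m} to assert that $\groupfund m$ and $\symfund m$ are cut out by finitely many polynomial inequalities, leaving implicit that the boundary of such a set lies in the union of the corresponding zero varieties, each a finite union of lower-dimensional submanifolds. Your version makes the Iwasawa product structure $K_{fd}\times S$ explicit and invokes Corollary \ref{cor:BCS fundamental domain for finite groups in compact groups} for the $K$-factor rather than treating its defining condition as one more polynomial inequality, which is a slightly more careful bookkeeping of the same idea, not a different route.
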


\begin{proof}
According to Proposition \ref{prop: RS sets defd by finite number of inequalites}
 and Definition \ref{def: Fund dom F_m}, $\groupfund m$ and $\symfund m$
are defined by a finite number of polynomial inequalities. 
\end{proof}

The significance of $\groupfund m$ and $\symfund m$ stems from the
following:
\begin{thm}
\label{thm: Fundamental domain for SL(m,Z)} $\groupfund m\subset\sl m\left(\RR\right)$
and $\symfund m\subset P_{m}$ are the closures of fundamental domains
for the right actions of $\sl m\left(\ZZ\right)$ on $\sl m\left(\RR\right)$
and on $P_{m}$ respectively. 
\end{thm}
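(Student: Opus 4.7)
The plan is to verify, for $\groupfund{m}$, the two defining properties of a fundamental-domain closure --- \emph{covering} of the right $\sl{m}(\ZZ)$-orbits, and \emph{uniqueness on the interior} --- and then to deduce the corresponding statement for $\symfund{m}\subset P_{m}$ via the Iwasawa identification $K\backslash\sl{m}(\RR)\simeq P_{m}$, under which $\groupfund{m}$ corresponds to $K_{0}\cdot\symfund{m}$ (with $K_{0}\subset K$ the $Z(K)$-fundamental-domain closure of condition~2 in Definition~\ref{def: Fund dom F_m}, chosen so that $I\in K_{0}$).

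For covering, I would run the inductive reduced-basis construction of Subsection~\ref{subsec: Siegel reduced bases} on the column lattice $\Lambda$ of an arbitrary $M\in\sl{m}(\RR)$; this produces a reduced matrix $M'=KAN$, related to $M$ by a right $\gl{m}(\ZZ)$-multiplication that, after one additional sign-flip, lies in $\sl{m}(\ZZ)$. Conditions~1 and~2 of Definition~\ref{def: Fund dom F_m} are then enforced by a further right-multiplication by a diagonal sign matrix $E=\diag{\epsilon_{1},\dots,\epsilon_{m}}$ with $\prod\epsilon_{j}=1$; this leaves $A$ fixed while sending $K\mapsto KE$ and $n_{i,j}\mapsto\epsilon_{i}\epsilon_{j}n_{i,j}$. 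The construction of a valid $E$ splits on the parity of $m$, reflecting $Z(\so m(\RR))=\{I\}$ for $m$ odd versus $\{\pm I\}$ for $m$ even: in the odd case the sign constraints on $n_{1,j}$ determine $\epsilon_{1}\epsilon_{j}$ for $j>1$, and the residual freedom in $\epsilon_{1}$ adjusts the parity of $|\{j:\epsilon_{j}=-1\}|$; in the even case the two free parameters $\epsilon_{1},\epsilon_{2}$ handle the sign conditions on $n_{1,j}$ for $j>2$, while a simultaneous flip of all signs (with $|S|=m$ even) exchanges $K_{0}$ and $-K_{0}$.

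For uniqueness on the interior, every defining inequality of $\groupfund{m}$ holds strictly at an interior point: the successive minima $a_{j}$ are strictly increasing (part~2 of Lemma~\ref{lem: BLC. facts about z in RS domain}) and each is attained by a unique pair $\pm v_{j}$ in its coset; the entries $n_{i,j}$ lie in the open interval $(-\tfrac{1}{2},\tfrac{1}{2})$, pinning the integer offsets by the previous $v_{i}$'s; and the sign conditions of Definition~\ref{def: Fund dom F_m} hold strictly, pinning the residual sign-flip ambiguity. Combined with Remark~\ref{rem: finitely many SR bases} (finiteness of reduced bases), these statements force any two interior points of $\groupfund{m}$ in the same right $\sl{m}(\ZZ)$-orbit to coincide.

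For $\symfund{m}$: covering is immediate by projecting a $\groupfund{m}$-representative of $p\sl{m}(\ZZ)$ to $P_{m}$. For uniqueness, suppose $p_{1},p_{2}\in\interior{\symfund{m}}$ lie in the same induced orbit on $P_{m}$, so $p_{1}\gamma=K'p_{2}$ for some $\gamma\in\sl{m}(\ZZ)$ and $K'\in K$. Writing $K'=zK''$ with $z\in Z(K)$ and $K''\in K_{0}$, and using that $z\in\{\pm I\}$ is central in $\sl{m}(\RR)$ and lies in $\sl{m}(\ZZ)$, the identity rearranges to $K''p_{2}=p_{1}\cdot(\gamma z^{-1})$; both sides lie in $\groupfund{m}$, so uniqueness upstairs yields $K''p_{2}=p_{1}$, and the Iwasawa uniqueness $K\cap P_{m}=\{I\}$ forces $K''=I$ and $p_{1}=p_{2}$. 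The main obstacle is the sign-flip analysis in the covering step --- balancing the parity constraint $\prod\epsilon_{j}=1$ against the sign conditions on $n_{1,j}$ and the $Z(K)$-condition on $K$ --- which is precisely what makes Definition~\ref{def: Fund dom F_m} distinguish $m$ even from $m$ odd.
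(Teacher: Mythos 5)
Your proposal is correct and takes essentially the same approach as the paper: covering by running the reduced-basis construction and enforcing the sign conditions of Definition~\ref{def: Fund dom F_m} via a right multiplication by a diagonal sign matrix, and uniqueness on the interior via strict inequalities. You supply more detail than the paper on the parity bookkeeping of the sign-flip (odd vs.\ even $m$) and on the reduction of the $\symfund m$ statement to the $\groupfund m$ one through the Iwasawa identification, both of which the paper leaves largely implicit.
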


\begin{proof}
Since $\groupfund m$ is a subset of the reduced Siegel set, the
latter containing a basis for every lattice, defined by the additional
conditions from Definition \ref{def: Fund dom F_m}, which merely
impose a choice of signs for the basis elements --- we conclude that
$\groupfund m$ contains a representative (basis) for every unimodular
lattice in $\RR^{m}$. 

It is now sufficient to show that every $\mtx\in\interior{\groupfund m}$
is the unique representative for the lattice spanned by its columns.
In other words, if a given lattice has more than one representative
in $\groupfund m$, then these representatives (that are reduced matrices
for the lattice) lie in the boundary of $\groupfund m$. 

Let $\mtx\in\interior{\groupfund m}$. Then, $\mtx$ satisfies a strict
version of the inequalities in Formula \ref{eq:dist} and in Definition
\ref{def: Fund dom F_m}. Write $\mtx=kan$. Using induction and (the
strict version of) inequality \ref{eq:dist}, it is clear that $v_{1}$
is uniquely determined up to a sign; $v_{2}$ is uniquely determined
up to a sign and modulo $V_{1}$; and so forth, every $v_{j}$ is
uniquely determined up to a sign and modulo $V_{j-1}$. As a result,
$a$ is uniquely determined, and the columns of $k$, $\phi_{1},\dots,\phi_{m}$,
are determined up to a sign. Since $v_{j}=\sum_{i=1}^{j}n_{i,j}\left(a_{i}\phi_{i}\right)$,
one can show using reverse induction (from $i=j-1$ to $i=1$) that
there are unique $n_{i,j}$ satisfying the strict version of condition
\ref{eq: signs of n} from Definition \ref{def: Fund dom F_m} so
that $v_{1},\dots,v_{m}$ are determined up to a sign. According to
the explanation in the beginning of this section, the inequalities
in Definition \ref{def: Fund dom F_m} impose a unique choice of signs,
and therefore a unique representative in the interior of $\groupfund m$. 
\end{proof}
\begin{rem}[\textbf{Declaring abuse of notation}]
As mentioned in Theorem \ref{thm: Fundamental domain for SL(m,Z)},
$\groupfund m$ and $\symfund m$ are \emph{closures} of fundamental
domains; in order to obtain actual fundamental domains, one should
remove parts of their boundaries, leaving a unique representative
for every lattice. However, we will abuse notation and denote $\groupfund m$
and $\symfund m$ for the \emph{actual fundamental domains}, and not
their closures.
\end{rem}

\begin{notation}
\label{def: Notation for K_z}For a matrix $\mtx$, denote by $K_{M}$
a fundamental domain in $\so m\left(\RR\right)$ for the finite group
$\sym^{+}\left(M\right)$, which is the stabilizer in $\so m\left(\RR\right)$
of the lattice spanned by the columns of $M$. 
\end{notation}

\begin{prop}
\label{prop: F of G'' from F of H}The relation between the fundamental
domains $\groupfund m$ and $\symfund m$ is given by
\[
\groupfund m=\bigcup_{z\in\symfund m}K_{z}\cdot z,
\]
and when $z\in\interior{\symfund m}$ it holds that $\sym^{+}\left(M\right)=Z\left(K\right)$,
the center of $K=\so m\left(\RR\right)$. . 
\end{prop}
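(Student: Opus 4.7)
The plan is to first identify the $K$-fiber of the projection $\groupfund m\to\symfund m$, and then derive the interior statement from the uniqueness-of-representative property established in the proof of Theorem \ref{thm: Fundamental domain for SL(m,Z)}.

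For the set equality $\groupfund m=\bigcup_{z\in\symfund m}K_{z}\cdot z$: given $M\in\groupfund m$, I would use the Iwasawa decomposition $M=kz$ with $k\in\so m\left(\RR\right)$ and $z\in P_{m}$. The reducedness inequalities (a finite system by Proposition \ref{prop: RS sets defd by finite number of inequalites}) and the sign condition of Definition \ref{def: Fund dom F_m} depend only on the $A$- and $N$-components, which are the same for $M$ and $z$, so $z\in\symfund m$. It remains to determine the admissible $K$-parts. Here the key observation is that, for fixed $z\in\symfund m$, two elements $k_{1}z,k_{2}z$ represent the same right $\sl m\left(\ZZ\right)$-coset (equivalently, span the same lattice) iff $k_{2}^{-1}k_{1}\in\sym^{+}\left(z\right)$. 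Since $\groupfund m$ picks out one representative from each $\sl m\left(\ZZ\right)$-orbit, the fiber $\left\{ k\in\so m\left(\RR\right):kz\in\groupfund m\right\} $ must be a fundamental domain for the finite group $\sym^{+}\left(z\right)$ in $\so m\left(\RR\right)$, which is exactly $K_{z}$ by Notation \ref{def: Notation for K_z}.

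For the interior claim, $Z\left(K\right)\subseteq\sym^{+}\left(z\right)$ is automatic: $\pm I$ preserves every lattice and $Z\left(K\right)=\left\{ \pm I\right\} \cap\so m\left(\RR\right)$. For the reverse, I would choose the fundamental domain of $Z\left(K\right)$ used in Definition \ref{def: Fund dom F_m} so that $I$ lies in its interior. Pick any $k\in\sym^{+}\left(z\right)$ lying in this fundamental domain (every $Z\left(K\right)$-coset in $\sym^{+}\left(z\right)$ has such a representative, and it automatically belongs to $\sym^{+}\left(z\right)$ since $Z\left(K\right)\subseteq\sym^{+}\left(z\right)$). Then both $z=I\cdot z$ and $kz$ satisfy the defining conditions of $\groupfund m$: the $A,N$ parts are those of $z\in\symfund m$, while the $K$-parts $I,k$ both lie in the chosen $Z\left(K\right)$-fundamental domain. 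Hence $z,kz\in\groupfund m$ and both span $\Lambda_{z}$ since $k\Lambda_{z}=\Lambda_{z}$. Moreover $z\in\interior{\groupfund m}$, because the reducedness and sign inequalities are strict at $z$ by $z\in\interior{\symfund m}$, and the $K$-part $I$ is in the interior of the $Z\left(K\right)$-fundamental domain. The uniqueness clause from the proof of Theorem \ref{thm: Fundamental domain for SL(m,Z)} then forces $kz=z$, i.e.\ $k=I$, so every $Z\left(K\right)$-coset of $\sym^{+}\left(z\right)$ is trivial, giving $\sym^{+}\left(z\right)=Z\left(K\right)$. Centrality of $Z\left(K\right)$ yields $\sym^{+}\left(M\right)=k\sym^{+}\left(z\right)k^{-1}=Z\left(K\right)$ for any $M=kz$ above $z$.

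The main obstacle is matching up the two descriptions of the $K$-fiber: Definition \ref{def: Fund dom F_m} explicitly uses a fundamental domain of $Z\left(K\right)$, whereas the proposition expresses the fiber via $K_{z}$, a fundamental domain of the potentially larger group $\sym^{+}\left(z\right)$. The interior claim reconciles the two over $\interior{\symfund m}$, where they coincide; for boundary $z$ the discrepancy lies in a lower-dimensional set which is absorbed by the ``actual vs.\ closure of a fundamental domain'' abuse of notation declared just before the proposition.
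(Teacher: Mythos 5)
The paper itself provides no proof of this proposition; it is stated bare, with the following remark merely attributing the interior claim (for \emph{any} fundamental domain) to Schmidt. So there is no authorial argument to compare against, and your job was to reconstruct one.

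Your reconstruction is sound and fills the gap sensibly. The set equality reduces, via the Iwasawa chart, to the observation that the conditions in Definition \ref{def: Fund dom F_m} on $A$ and $N$ depend only on the $P_m$-part, hence put $z\in\symfund{m}$, while the $K$-fiber over $z$ must pick one representative per lattice of shape $[\Lambda_z]$ and is therefore a transversal of $\sym^{+}(z)$ in $\so{m}(\RR)$. The interior claim you derive internally, from the strict-inequality uniqueness step in the proof of Theorem \ref{thm: Fundamental domain for SL(m,Z)}: if $k\in\sym^{+}(z)$ lies in the chosen $Z(K)$-fundamental domain, then $z$ and $kz$ are both admissible representatives of $\Lambda_z$, and since $z\in\interior{\groupfund{m}}$ this forces $k=I$. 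This is a self-contained route; the paper instead treats the fact as imported from \cite[p.50]{Schmidt_98}, where it is proved for arbitrary fundamental domains.

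Two small points worth tightening. First, when you assert that the fiber $\{k:kz\in\groupfund{m}\}$ ``must be a fundamental domain for $\sym^{+}(z)$,'' you are implicitly using that \emph{every} lattice of the shape $[\Lambda_z]$ has its $\groupfund{m}$-representative sitting over $z$ and not over some other $z'\in\symfund{m}$; this follows because $\symfund{m}$ is a fundamental domain for shapes and the Iwasawa $P_m$-part of an element of $\sl{m}(\RR)$ is unchanged by left multiplication by $\so{m}(\RR)$, but it deserves a sentence. Second, as you note yourself, Notation \ref{def: Notation for K_z} does not fix $K_z$ uniquely, so ``which is exactly $K_z$'' should really read ``which we may take to be $K_z$''; the proposition is best understood as specifying how the fibers $K_z$ are to be chosen (agreeing with a common $Z(K)$-fundamental domain over the interior of $\symfund{m}$ and shrinking to $\sym^{+}(z)$-transversals on the boundary, exactly the picture the ``closure vs.\ actual fundamental domain'' abuse of notation is designed to absorb). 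You identified this tension correctly; it is an imprecision in the paper, not a defect of your argument.
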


\begin{rem}
\label{rem: symmetric lattices are in the boundary}In fact, it is
shown in \cite[p.50]{Schmidt_98} that the interior of \emph{any}
fundamental domain of $\sl m\left(\ZZ\right)$ consists of matrices
$M$ for which the $\sym^{+}\left(M\right)$ is $Z\left(K\right)$. 
\end{rem}

\begin{cor}
From Proposition \ref{prop: F of G'' from F of H} it follows that
the measure of $\groupfund m$ is the measure of $\symfund m$ times
the measure of a generic fiber in $\so m\left(\RR\right)$ (namely
the fiber of the interior points), which is the measure of $\so m\left(\RR\right)$
divided by the index of its center: $2$ when $m$ is even, and $1$
when $m$ is odd. 
\end{cor}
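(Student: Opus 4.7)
The plan is to use the Iwasawa decomposition $\sl m(\RR)=K\cdot P_m$ (with $K=\so m(\RR)$) and observe that the defining conditions of $\groupfund m$ split cleanly across the two factors. For $M\in\sl m(\RR)$ with unique Iwasawa form $M=kz$, $k\in K$, $z\in P_m$, the lattice $\Lambda_M$ spanned by the columns of $M$ is the rotated lattice $k\cdot\Lambda_z$. Since the reducedness conditions in Definition \ref{def: Siegel reduced basis} involve only Euclidean lengths and orthogonal projections---both rotation-invariant---$M$ is reduced for $\Lambda_M$ iff $z$ is reduced for $\Lambda_z$. Moreover, by uniqueness of Iwasawa the $A$ and $N$ components of $M$ coincide with those of $z$, so the sign conditions on $n_{1,j}$ of Definition \ref{def: Fund dom F_m}(1) are equivalent for $M$ and $z$. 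Therefore the conditions characterizing $M\in\groupfund m$ decompose as (i) $z\in\symfund m$, and (ii) $k$ lies in a fixed closure of a fundamental domain of $Z(K)$ in $K$.

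Next I would establish the second assertion, that $\sym^{+}(M)=Z(K)$ when $z\in\interior{\symfund m}$. A direct computation gives $\sym^{+}(M)=k\,\sym^{+}(z)\,k^{-1}$ (the stabilizer of $k\Lambda_z$ in $K$ is the $k$-conjugate of the stabilizer of $\Lambda_z$); since $Z(K)$ is conjugation-invariant in $K$, this reduces the claim to $\sym^{+}(z)=Z(K)$. Suppose for contradiction that $\sym^{+}(z)\supsetneq Z(K)$ for some $z\in\interior{\symfund m}$. Then for any $k$ in the interior of the chosen $Z(K)$-fundamental domain, the matrix $M=kz$ satisfies the strict versions of all defining inequalities of $\groupfund m$, hence $M\in\interior{\groupfund m}$; this contradicts Remark \ref{rem: symmetric lattices are in the boundary}, which asserts $\sym^{+}(M)=Z(K)$ throughout that interior.

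With $\sym^{+}(z)=Z(K)$ established on $\interior{\symfund m}$, the fundamental domain $K_z$ coincides with a fundamental domain of $Z(K)$ in $K$ for generic $z$. Taking $K_z$ to match the global $Z(K)$-fundamental domain used in Definition \ref{def: Fund dom F_m}(2), condition (ii) becomes $k\in K_z$, yielding the set equality $\groupfund m=\bigcup_{z\in\symfund m}K_z\cdot z$. The main obstacle is the measure-zero boundary of $\symfund m$, where $\sym^{+}(z)$ may properly contain $Z(K)$ and $K_z$ is strictly smaller than the $Z(K)$-fundamental domain; this discrepancy is absorbed by the ``abuse of notation'' convention that treats both sides as closures of genuine fundamental domains.
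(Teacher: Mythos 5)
The corollary is a measure identity, and its whole content is the integration step; your proposal never performs it. You spend essentially all of your effort re-deriving Proposition~\ref{prop: F of G'' from F of H} — the Iwasawa splitting of the defining conditions, the conjugation formula $\sym^{+}(kz)=k\,\sym^{+}(z)\,k^{-1}$, the identification of $\sym^{+}(z)$ with $Z(K)$ on $\interior{\symfund m}$, and the set equality $\groupfund m=\bigcup_{z\in\symfund m}K_z\cdot z$. But the corollary explicitly assumes that proposition; what it asks you to derive is the equality
\[
\mu\left(\groupfund m\right)=\mu\left(\symfund m\right)\cdot\frac{\mu\left(\so m\left(\RR\right)\right)}{\left|Z\left(\so m\left(\RR\right)\right)\right|},
\]
and for this you need the disintegration of Haar measure along the left $K$-action, namely the Weil-type formula (which is the paper's Proposition~\ref{prop: measure on spread model} applied to $\manifold=\sl m\left(\RR\right)$, $H=K=\so m\left(\RR\right)$, $\manifold/H\diffeo P_m$): for any measurable $S\subseteq\sl m\left(\RR\right)$,
\[
\mu_{G}\left(S\right)=\int_{P_m}\mu_{K}\left(S\cap Kz\right)\,d\mu_{P_m}\left(z\right).
\]
Applying this to $S=\groupfund m$ gives $\mu_{G}\left(\groupfund m\right)=\int_{\symfund m}\mu_{K}\left(K_z\right)\,d\mu_{P_m}\left(z\right)$; since Corollary~\ref{cor:boundary of Fn} shows $\del\symfund m$ has $\mu_{P_m}$-measure zero and the proposition gives $\mu_{K}\left(K_z\right)=\mu_{K}\left(K\right)/\left|Z\left(K\right)\right|$ for every $z\in\interior{\symfund m}$, the integral evaluates to the stated product. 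Without this Fubini step the proof does not reach the conclusion. A smaller quibble: the statement's phrase ``index of its center'' should be read as the order $\left|Z\left(K\right)\right|$ (equal to $2$ for $m$ even, $1$ for $m$ odd), and your proof should make that numerical identification explicit, since it is the only place the parity of $m$ actually enters.
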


\section{\label{sec:Special-examples-in-SL}Special examples of spread models
in $SL_{m}\left(\mathbb{R}\right)$}

\subsection{Space of $d$-dimensional lattices in $\mathbb{R}^{n}$}

In a similar fashion to the construction of the space of full dimensional
lattices in $\mathbb{R}^{n}$, we can construct the space of unimodular
rank $d$ lattices with orientation in $\mathbb{R}^{n}$: 
\[
\latspace{d,n}:=\sl n\left(\RR\right)/T,
\]
where 
\[
T=\left[\begin{smallmatrix}\sl d\left(\ZZ\right) & \RR^{d,n-d}\\
0_{n-d,d} & \sl{n-d}\left(\RR\right)
\end{smallmatrix}\right]\times\left\{ \left[\begin{smallmatrix}\a^{-\frac{1}{d}}\idmat d & 0_{d,n-d}\\
0_{n-d,d} & \a^{\frac{1}{n-d}}\idmat{n-d}
\end{smallmatrix}\right]:\a>0\right\} .
\]
One can also get a fundamental domain for $\latspace{d,n}$ in $\sl n\left(\RR\right)$
using a variant of Iwasawa decomposition. For this denote by $P^{\prime\prime}$
the subgroup $\left[\begin{smallmatrix}P_{d} & 0_{d,n-d}\\
0_{n-d,d} & P_{n-d}
\end{smallmatrix}\right]$ in $\sl n\left(\RR\right)$, $K=\so n\left(\RR\right)$, $K^{\dprime}=\left[\begin{smallmatrix}\so d\left(\RR\right) & 0_{d,n-d}\\
0_{n-d,d} & \so{n-d}\left(\RR\right)
\end{smallmatrix}\right]$ and let $K^{\prime}$ be a spread model for $K/K^{\prime\prime}\diffeo\gras{d,n}$,
the Grassmanian of oriented $d$-dimensional subspaces of $\RR^{n}$
(see Proposition \ref{prop: K' is s.m. for sphere}). It is easy to
conclude that
\[
\latspace{d,n}=\sl n\left(\RR\right)/T\diffeo KP^{\dprime}/\left[\begin{smallmatrix}\sl d\left(\ZZ\right) & 0_{d,n-d}\\
0_{n-d,d} & \so{n-d}\left(\RR\right)
\end{smallmatrix}\right]
\]
 and that $K^{\prime}\left[\begin{smallmatrix}\groupfund d & 0_{d,n-d}\\
0_{n-d,d} & I_{n-d}
\end{smallmatrix}\right]$ is a set of representatives for $T$ inside $\sl n\left(\RR\right)$,
and for $\left[\begin{smallmatrix}\sl d\left(\ZZ\right) & 0_{d,n-d}\\
0_{n-d,d} & \so{n-d}\left(\RR\right)
\end{smallmatrix}\right]$ inside $KP^{\dprime}$. 

Next, we have the space of oriented normalized quotient latices of
$\Lambda/\Lambda_{d}$, where $\Lambda$ is a full lattice in $\RR^{n}$
and $\Lambda_{d}$ is a $d$-dimensional sub-lattice of $\Lambda$.
It is more convenient to identify $\Lambda/\Lambda_{d}$ with the
orthogonal projection of $\Lambda$ onto the subspace orthogonal to
$\sp{\RR}{\lat}$. Now it is easy to see that this space, say $\factor{\latspace{d,n}}$,
can be presented as the quotient 
\[
\factor{\latspace{d,n}}=\sl n\left(\RR\right)/T_{\#}=KP^{\dprime}/\left[\begin{smallmatrix}\so d\left(\RR\right) & 0_{d,n-d}\\
0_{n-d,d} & \sl{n-d}\left(\ZZ\right)
\end{smallmatrix}\right],
\]
 where 
\[
\factor T=\left[\begin{smallmatrix}\sl d\left(\RR\right) & \RR^{d,n-d}\\
0_{n-d,d} & \sl{n-d}\left(\ZZ\right)
\end{smallmatrix}\right]\times\left\{ \left[\begin{smallmatrix}\a^{-\frac{1}{d}}\idmat d & 0_{d,n-d}\\
0_{n-d,d} & \a^{\frac{1}{n-d}}\idmat{n-d}
\end{smallmatrix}\right]:\a>0\right\} .
\]
It is easy to see that $K^{\prime}\left[\begin{smallmatrix}I_{d} & 0_{d,n-d}\\
0_{n-d,d} & \groupfund{n-d}
\end{smallmatrix}\right]$ is a set of representatives for $\factor T$ inside $\sl n\left(\RR\right)$
and for $\left[\begin{smallmatrix}\so d\left(\RR\right) & 0_{d,n-d}\\
0_{n-d,d} & \sl{n-d}\left(\ZZ\right)
\end{smallmatrix}\right]$ inside $KP^{\dprime}$. 

In fact this space is diffeomorphic to $\latspace{n-d,n}$. To see
this write $g\in\sl n\left(\RR\right)$ as $g=\left(A|B\right)$,
where $A\in\RR^{n,d}$ and $B\in\RR^{n-d,n}$ and consider the map
from $\factor{\latspace{d,n}}$ to $\latspace{n-d,n}$ given by
\[
\left(A|B\right)\cdot\factor{T_{d}}\mapsto\left(\brac{I-A\brac{A^{\transpose}A}^{-1}A^{\transpose}}B|A\right)\cdot T_{n-d}
\]
and the inverse map is given by
\[
\brac{\hat{A}|\hat{B}}\cdot T_{n-d}\mapsto\left(\brac{I-\hat{A}\brac{\hat{A}^{T}\hat{A}}^{-1}\hat{A}^{\transpose}}\hat{B}|\hat{A}\right)\cdot\factor{T_{d}},
\]
where $\hat{B}\in\RR^{n,d}$ and $\hat{A}\in\RR^{n-d,n}$. We leave
it to the reader to check that these maps are well-defined and are
inverses of each other (see also the appendix of our forthcoming work
\cite{HK_dlattices}). 

A third and final space is the space of normalized pairs $\left(\Lambda,\factor{\Lambda}\right)$,
where $\Lambda$ is a rank $d$ oriented unimodular lattice and $\factor{\Lambda}$
is an oriented unimodular quotient lattice $\latfull/\Lambda$, with
$\latfull$ being a full lattice containing $\Lambda$. The resulting
space is modeled by 
\[
\pairspace{d,n}:=\sl n\left(\RR\right)/T_{d}^{\&}=KP^{\dprime}/\left[\begin{smallmatrix}\sl d\left(\ZZ\right) & 0_{d,n-d}\\
0_{n-d,d} & \sl{n-d}\left(\ZZ\right)
\end{smallmatrix}\right],
\]
where 
\[
T^{\&}=\left[\begin{smallmatrix}\sl d\left(\ZZ\right) & \RR^{d,n-d}\\
0_{n-d,d} & \sl{n-d}\left(\ZZ\right)
\end{smallmatrix}\right]\times\left\{ \left[\begin{smallmatrix}\a^{-\frac{1}{d}}\idmat d & 0_{d,n-d}\\
0_{n-d,d} & \a^{\frac{1}{n-d}}\idmat{n-d}
\end{smallmatrix}\right]:\a>0\right\} .
\]
It is easy to see that $K^{\prime}\left[\begin{smallmatrix}\groupfund d & 0_{d,n-d}\\
0_{n-d,d} & \groupfund{n-d}
\end{smallmatrix}\right]$ is a set of representatives for $T^{\&}$ inside $\sl n\left(\RR\right)$
and for $\left[\begin{smallmatrix}\sl d\left(\ZZ\right) & 0_{d,n-d}\\
0_{n-d,d} & \sl{n-d}\left(\ZZ\right)
\end{smallmatrix}\right]$ inside $KP^{\dprime}$.

\subsection{Spread models for lattice spaces}

The goal of this final part is to provide concrete examples for spread
models. These examples are all for spaces of lattices, which is where
the authors' interest in spread models stems from. 
\begin{prop}
\label{prop: spread models that we need}The following pairs consist
of quotient spaces and their spread models in the corresponding manifolds:
\begin{enumerate}
\item $\latspace m=\sl m\left(\RR\right)/\sl m\left(\ZZ\right)$, and $\groupfund m$
inside $\manifold=\sl m\left(\RR\right)$;
\item $\shapespace m=\so m\left(\RR\right)\backslash\sl m\left(\RR\right)/\sl m\left(\ZZ\right)\diffeo P_{m}/\sl m\left(\ZZ\right)$,
and $\symfund m$ inside $\manifold=P_{m}$.
\item $\pairspace{d,n}=KP^{\dprime}/\left[\begin{smallmatrix}\sl d\left(\ZZ\right) & 0_{d,n-d}\\
0_{n-d,d} & \sl{n-d}\left(\ZZ\right)
\end{smallmatrix}\right]$ and $K^{\prime}\left[\begin{smallmatrix}\groupfund d & 0_{d,n-d}\\
0_{n-d,d} & \groupfund{n-d}
\end{smallmatrix}\right]$ inside $KP^{\dprime}$
\item \label{enu: model space for n-1 dim lattices}$\latspace{d,n}=KP^{\dprime}/\left[\begin{smallmatrix}\sl d\left(\ZZ\right) & 0_{d,n-d}\\
0_{n-d,d} & \so{n-d}\left(\RR\right)
\end{smallmatrix}\right]$ and $K^{\prime}\left[\begin{smallmatrix}\groupfund d & 0_{d,n-d}\\
0_{n-d,d} & I_{n-d}
\end{smallmatrix}\right]$ inside $\manifold=KP^{\dprime}$.
\item $\factor{\latspace{d,n}}=KP^{\dprime}//\left[\begin{smallmatrix}\so d\left(\RR\right) & 0_{d,n-d}\\
0_{n-d,d} & \sl{n-d}\left(\ZZ\right)
\end{smallmatrix}\right]$ and $K^{\prime}\left[\begin{smallmatrix}I_{d} & 0_{d,n-d}\\
0_{n-d,d} & \groupfund{n-d}
\end{smallmatrix}\right]$ inside $\manifold=KP^{\dprime}$.
\end{enumerate}
\end{prop}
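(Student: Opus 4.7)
Items (1) and (2) follow from Remark~\ref{rem:sm for discrete groups}. For (1), the right action of $\sl m(\ZZ)$ on $\sl m(\RR)$ is smooth, proper, and almost free; Theorem~\ref{thm: Fundamental domain for SL(m,Z)} identifies $\groupfund m$ as the closure of a fundamental domain, so in particular $\groupfund m\subseteq\overline{\interior{\groupfund m}}$, and Corollary~\ref{cor:boundary of Fn} shows that $\partial\groupfund m$ lies in a finite union of lower-dimensional submanifolds of $\sl m(\RR)$. The only non-routine ingredient is the properness of $\pi\vert_{\overline{\groupfund m}}\colon\overline{\groupfund m}\to\latspace m$: a sequence escaping to infinity inside $\overline{\groupfund m}$ must have its $\sl m(\ZZ)$-orbit escape in $\latspace m$, which is the standard content of Siegel reduction. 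Item (2) follows by the same argument applied to $\symfund m\subset P_m$, with $\sl m(\ZZ)$ acting on $P_m$ by projecting $p\gamma$ back to $P_m$ via Iwasawa, and properness inherited from (1) through the Iwasawa projection $\groupfund m\to\symfund m$.

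For items (3)--(5) I plan to apply the composition Proposition~\ref{prop: M/G * G/H =00003D M/H} in the ambient $\manifold=\sl n(\RR)$, with the common intermediate group $G=\left[\begin{smallmatrix}\sl d(\RR) & \RR^{d,n-d}\\0 & \sl{n-d}(\RR)\end{smallmatrix}\right]\cdot\{\text{scaling}\}$, the full block-upper-triangular parabolic. The item-specific subgroup $H\subset G$ is $T^{\&}$ in (3), $T$ in (4), and $\factor T$ in (5). In each case $\sl n(\RR)/G\cong\gras{d,n}$, and $K'$ is a spread model of it by combining Proposition~\ref{prop: K' is s.m. for sphere} with Proposition~\ref{prop: S.M. for space that can be written as two quotients} through the diffeomorphism $K/K^{\dprime}\cong\sl n(\RR)/G$. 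The quotient $G/H$ factors respectively as $\latspace d\times\latspace{n-d}$, $\latspace d$, or $\latspace{n-d}$, and part (1) supplies the block-diagonal spread models $\left[\begin{smallmatrix}\groupfund d & 0\\0 & \groupfund{n-d}\end{smallmatrix}\right]$, $\left[\begin{smallmatrix}\groupfund d & 0\\0 & I_{n-d}\end{smallmatrix}\right]$, and $\left[\begin{smallmatrix}I_d & 0\\0 & \groupfund{n-d}\end{smallmatrix}\right]$ inside $G$ (with zero in the $\RR^{d,n-d}$ factor and unit scaling). Composition then yields the claimed spread model of $\sl n(\RR)/H$ inside $\sl n(\RR)$.

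The main obstacle is to transfer each such spread model from $\sl n(\RR)$ down to the submanifold $KP^{\dprime}$ where the proposition situates it. First I verify that the candidate $F$ actually lies inside $KP^{\dprime}$ by block-Iwasawa-decomposing each $M\in\groupfund{d_i}$ as $M=k_M p_M$ with $k_M\in\so{d_i}$ and $p_M\in P_{d_i}$, giving $\left[\begin{smallmatrix}M_1 & 0\\0 & M_2\end{smallmatrix}\right]\in K^{\dprime}\cdot P^{\dprime}\subset KP^{\dprime}$. Next, I check that the natural map $KP^{\dprime}/H\to\sl n(\RR)/G$ is a diffeomorphism by showing that the complementary factor $B\subset G$ (the upper-right $\RR^{d,n-d}$ together with the scaling) acts trivially on $KP^{\dprime}$: if $g\in KP^{\dprime}$ and $gb\in KP^{\dprime}$ for some $b\in B$, then reading off the block structure of $pb$ with $g=kp$ forces $b=e$. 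Once this diffeomorphism is in place, Proposition~\ref{prop: S.M. for space that can be written as two quotients} transfers the spread model to $KP^{\dprime}$, and properness of $\pi\vert_{\overline{F}}$ follows from the compactness of $\overline{K'}$ together with the Siegel properness of $\groupfund d$ and $\groupfund{n-d}$ established in (1).
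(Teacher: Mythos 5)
Your treatment of parts (1) and (2) is essentially the paper's, though with less detail: the paper carries out the properness verification explicitly via Mahler's compactness criterion (bounding the diagonal entries $a_1,\ldots,a_m$ using part \ref{enu: entries of a increasing} of Lemma \ref{lem: BLC. facts about z in RS domain}), and for $\groupfund m$ it needs the additional observation (via Assumption \ref{ass: half K can be BCS} and Proposition \ref{prop: F of G'' from F of H}) that the $K$-fibers can be chosen as nested BCS's; Corollary \ref{cor:boundary of Fn} alone does not deal with the $K$-direction of the boundary. These are fillable, but the load-bearing ingredient is the Mahler argument, which you should not leave at the level of ``standard content of Siegel reduction.''

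For parts (3)--(5) you take a genuinely different route from the paper, and it has a gap. You compose in $\mathcal{M}=\sl n(\RR)$ with the full block-upper-triangular parabolic $G$ as the intermediate group, obtain a spread model there, and then try to ``transfer down'' to the submanifold $KP^{\dprime}$. The problem is that Proposition \ref{prop: S.M. for space that can be written as two quotients} goes the \emph{other} way: it lets you promote a spread model on a closed embedded submanifold $\mathcal{M}'$ (for the small group $H$) to a spread model on the ambient $\mathcal{M}$ (for the big group $G$). It gives you no tool for restricting a spread model built in $\sl n(\RR)$ to a spread model on $KP^{\dprime}$; conditions (1)--(3) of Definition \ref{def: Spread Model} are statements about submanifolds $V_\alpha\subseteq\mathcal{M}$ and the projection $V_\alpha\to\mathcal{M}/H_0$, so they do not simply descend under a change of ambient manifold. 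The transfer step in your final paragraph therefore does not close. (You also implicitly need a further instance of the same proposition to pass from a spread model on the Levi $L$ to one on the whole parabolic $G$ in the first place, which you don't spell out.)

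The paper's argument avoids the detour through $\sl n(\RR)$ entirely. It sets $\mathcal{M}=KP^{\dprime}$ from the start, takes the intermediate group to be the block-diagonal $G^{\dprime}=\left[\begin{smallmatrix}\sl d(\RR)&0\\0&\sl{n-d}(\RR)\end{smallmatrix}\right]$ (no unipotent radical, no scaling), and uses the identification $KP^{\dprime}/G^{\dprime}=K/K^{\dprime}$. Proposition \ref{prop: S.M. for space that can be written as two quotients} is then applied in its correct direction, $\mathcal{M}'=K\hookrightarrow\mathcal{M}=KP^{\dprime}$, to conclude that $K'$ is a spread model of $KP^{\dprime}/G^{\dprime}$ in $KP^{\dprime}$. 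Part (1) supplies the block-diagonal spread model of $G^{\dprime}/\left[\begin{smallmatrix}\sl d(\ZZ)&0\\0&\sl{n-d}(\ZZ)\end{smallmatrix}\right]$ in $G^{\dprime}$ (and similarly for parts (4)--(5) with one factor trivial), and Proposition \ref{prop: M/G * G/H =00003D M/H} composes the two inside $KP^{\dprime}$ directly. I'd suggest reworking your argument to set $\mathcal{M}=KP^{\dprime}$ and $G=G^{\dprime}$ from the outset, which makes the final transfer step unnecessary.
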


For the proof, we will use the following.
\begin{assumption}
\label{ass: half K can be BCS}According to Corollary \ref{cor:BCS fundamental domain for finite groups in compact groups},
we can choose all $K_{z}$'s to be BCS's. We choose the same BCS fundamental
domain $\mathcal{K}$ as $K_{z}$ for all $z\in P$ such that $\sym^{+}\left(z\right)$
iz Z$\left(K\right)$, and then apply Remark \ref{rem:spread models of subsubgroup}
to choose a BCS fundamental domain $K_{z}$ for the remaining $z$'s,
such that $K_{z}\subseteq\mathcal{K}$. 
\end{assumption}

\begin{proof}[Proof of Proposition \ref{prop: spread models that we need}]
\textbf{Parts 1 and 2.} The spaces $\latspace m$ and $\shapespace m$
are dealt similarly; since the quotients are by discrete subgroups,
we will verify that the fundamental domains are indeed spread models
using Remark \ref{rem:GlobalBCS}. For brevity, let $\mathcal{M}:=\so m\left(\RR\right)\backslash\sl m\left(\RR\right)=P_{m}$.
The boundary $\symfund m\cap\mathcal{M}_{free}$ is contained in a
finite union of lower dimensional submanifolds according to Corollary
\ref{cor:boundary of Fn}; for the boundary of $\groupfund m$, we
should use also the fact that all the $K$-fibers in the expression
for $\groupfund m$ given in Proposition \ref{prop: F of G'' from F of H}
are BCS's, by Assumption \ref{ass: half K can be BCS}. In more details,
\[
\groupfund m=(\mathcal{K}\cdot\brac{P_{m}\cap\interior{\symfund m}})\cup\bigcup_{i=1}^{\topindex\left(m\right)}K_{z_{i}}\cdot\left\{ z\in P_{m}\cap\del\symfund m:\sym^{+}\left(\lat_{z}\right)=\sym^{+}\left(\lat_{z_{i}}\right)\right\} 
\]
where $\left\{ z:\sym^{+}\left(\lat_{z}\right)=\sym^{+}\left(\lat_{z_{i}}\right)\right\} $
is contained in $\del\symfund m$,  and is therefore a BCS in $P_{m}$.
Since the fibers in $\so m\left(\RR\right)$ are BCS's due to Assumption
\ref{ass: half K can be BCS}, then by Fact \ref{rem: intersection and union of BCS}
and the fact that $\so m\left(\RR\right)\times P_{m}$ is homeomorphic
to $\sl m\left(\RR\right)$ we get that the boundary of $\groupfund m$
is also contained in a finite union of lower dimensional manifolds. 

The fact that the quotient map $\pi$ restricted to the closure is
proper is a consequence of the Mahler compactness criterion, as we
now explain. Assume that $B\subset\latspace m$ is compact, which
by Mahler's criterion means that there exists a positive constant
$\b$ such that for every $\lat\in B$, the length of the shortest
vector in $B$ is at least $\b$. Let $g=\pi\vert_{\overline{\groupfund m}}^{-1}\left(\lat\right)\in\overline{\groupfund m}$
and write $g=kan$ where $a=\diag{a_{1},\ldots,a_{m}}$. By construction
of $\groupfund m$, $a_{1}$ is the length of a shortest vector in
$\lat$, and therefore $\b\leq a_{1}$. Also by construction of $\groupfund m$,
the columns of $g$ are a reduced basis (Definition \ref{def: Siegel reduced basis})
to $\lat$; hence by part \ref{enu: entries of a increasing} Lemma
\ref{lem: BLC. facts about z in RS domain}, 
\[
0<\b\leq a_{1}\leq\brac{\sqrt{3}/2}a_{2}\leq\cdots\leq\brac{\sqrt{3}/2}^{m-1}a_{m}
\]
\[
\brac{\sqrt{3}/2}^{m-1}a_{m}=\brac{\sqrt{3}/2}^{m-1}\cdot\frac{1}{a_{1}\cdots a_{m-1}}\leq\brac{\sqrt{3}/2}^{\frac{\left(m-1\right)m}{2}}\cdot\frac{1}{\beta^{m-1}}.
\]
We conclude that $\left(a_{1},\ldots,a_{m-1}\right)$ lies in a bounded
subset of $\RR^{m-1}$, namely the $A$ components of  the elements
in $\pi\vert_{\overline{\groupfund m}}^{-1}\left(B\right)$ lie in
a compact set. The $N$ and $K$ components of the elements of $\groupfund m$
are bounded uniformly, so in particular it holds for the elements
of $\pi\vert_{\overline{\groupfund m}}^{-1}\left(B\right)$; we conclude
therefore that it is a compact set in $\groupfund m$, so the map
$\pi\vert_{\overline{\groupfund m}}$ is proper . The proof of properness
in the case of $\symfund m$ is identical. 

Finally, it is clear that $F_{m}\cap\mathcal{M}_{free}\subseteq\overline{\interior{F_{m}\cap\mathcal{M}_{free}}}$;
for $\widetilde{F_{m}}$, we have by Assumption \ref{ass: half K can be BCS}
that every $K_{z}$ lies in $\mathcal{K}$ and therefore,
\[
\overline{\interior{\widetilde{F_{m}}}}=\overline{\mathcal{K}\cdot\interior{F_{m}}}=\overline{\mathcal{K}}\cdot\overline{F_{m}}\supseteq\cup_{z\in\overline{F_{m}}}K_{z}\cdot z=\widetilde{F_{m}},
\]
where $\mathcal{K}$ is as in Assumption \ref{ass: half K can be BCS}.
Here the second equality follows from $\overline{\mathcal{K}}$ being
compact and the following inclusion is because $K_{z}\subset\mathcal{K}$. 

\textbf{Part 3.} Let us introduce the notation $G^{\dprime}$ for
the group $\left[\begin{smallmatrix}\sl d\left(\RR\right) & 0_{d,n-d}\\
0_{n-d,d} & \sl{n-d}\left(\RR\right)
\end{smallmatrix}\right]$. As for the space $\pairspace{d,n}$ we are going to use Proposition
\ref{prop: M/G * G/H =00003D M/H} with the space $KP^{\dprime}$,
the group $G^{\dprime}$ and its closed subgroup $\left[\begin{smallmatrix}\sl d\left(\ZZ\right) & 0_{d,n-d}\\
0_{n-d,d} & \sl{n-d}\left(\ZZ\right)
\end{smallmatrix}\right]$. Notice that $KP^{\dprime}/G^{\dprime}=K/K^{\dprime}$ so that, by
Proposition \ref{prop: S.M. for space that can be written as two quotients},
it follows that $K^{\prime}$ is a spread model in $KP^{\dprime}$
w.r.t.\ $G^{\dprime}$. By part 2 it is clear that $\left[\begin{smallmatrix}\groupfund d & 0_{d,n-d}\\
0_{n-d,d} & \groupfund{n-d}
\end{smallmatrix}\right]$ is a spread model in $G^{\dprime}$ w.r.t.\ $\left[\begin{smallmatrix}\sl d\left(\ZZ\right) & 0_{d,n-d}\\
0_{n-d,d} & \sl{n-d}\left(\ZZ\right)
\end{smallmatrix}\right]$. As a result, Proposition \ref{prop: M/G * G/H =00003D M/H} implies
that $K^{\prime}\left[\begin{smallmatrix}\groupfund d & 0_{d,n-d}\\
0_{n-d,d} & \groupfund{n-d}
\end{smallmatrix}\right]$ is indeed a spread model in $KP^{\dprime}$ for the group $\left[\begin{smallmatrix}\sl d\left(\ZZ\right) & 0_{d,n-d}\\
0_{n-d,d} & \sl{n-d}\left(\ZZ\right)
\end{smallmatrix}\right]$.

\textbf{Part 4 and 5.} These parts are proved in a similar fashion
as part 3.
\end{proof}
\bibliographystyle{alpha}
\phantomsection\addcontentsline{toc}{section}{\refname}\bibliography{bib_for_well_roundedness}

\end{document}